\documentclass[amssymb,amsfonts,12pt,verbatim,righttag,oneside]{amsart}

\usepackage{comment}
\usepackage{bbm}
\usepackage{amsmath}
\usepackage{amsfonts}
\usepackage{amssymb}
\usepackage{enumerate}
\usepackage{graphicx}
\usepackage{verbatim}
\usepackage{graphics,color}
\usepackage[colorlinks,citecolor=blue,pagebackref,urlcolor=black,hypertexnames=false]{hyperref}

\hypersetup{
    colorlinks=true,
    linkcolor=blue,
    filecolor=magenta,
    urlcolor=cyan,
    pdftitle={Overleaf Example},
    pdfpagemode=FullScreen,
    }


\setlength{\textwidth}{426pt}
\setlength{\textheight}{23.0cm}
\hoffset=-1.2cm \errorcontextlines=0
\setlength{\voffset}{-38mm}\setlength{\textheight}{270mm}

\numberwithin{equation}{section} 
 \pagestyle{plain}
\parskip 1.0ex
\theoremstyle{plain}


\definecolor{trp}{rgb}{1,1,1}

\definecolor{red}{rgb}{1,0,.2}
\newcommand*{\clrred}[1]{{\color{red} #1}}

\definecolor{blue}{rgb}{0,0,1}
\newcommand*{\clrblue}[1]{{\color{blue} #1}}
\newcommand{\cblue}[1]{\clrblue{ #1}}

\definecolor{rgrey}{rgb}{.8,0.4,.4}  
\definecolor{grey}{rgb}{.13,.13,.13}  
\definecolor{lightgray}{rgb}{0.83, 0.83, 0.83}

\definecolor{green}{rgb}{0.0,0.4,0.2}

\newcommand{\fm}{\ensuremath{\clrred{}}}
\newcommand{\fmu}{\ensuremath{\,}}
\newcommand{\fma}{\ensuremath{\,}}

\newcommand{\indicator}{\ind}
\newcommand{\Leb}{{\small \textsc{Leb}}}
\newcommand{\Lebfrac}{{\textsc{Leb}}}

\setlength{\marginparwidth}{18mm}

\newcommand{\dimH}{\ensuremath{\dim_{\rm H}}}


\newcommand*{\di}{\, \mathrm{d} }

\newcommand*{\ev}{\ensuremath{\mathbb{E}}}
\newcommand*{\pr}{\ensuremath{\mathbb{P}}}

\newcommand*{\ind}{\mathbf{1}}
\newcommand*{\ds}{\ensuremath{\displaystyle }}
\newcommand*{\eps}{\ensuremath{\varepsilon }}

\newcommand*{\Z}{\ensuremath{\mathcal{Z}}}

\newcommand*{\vect}[1]{\ensuremath{\underline{#1}}}
\newcommand*{\T}{\ensuremath{T}}
\newcommand*{\HH}{\ensuremath{T}}
\newcommand*{\Heta}{\ensuremath{H}}
\newcommand*{\J}{\ensuremath{\textsc{J}}}
\newcommand*{\xm}{\ensuremath{x_{\max}}}


\newtheorem{theorem}{Theorem}
\theoremstyle{plain}

\newtheorem{claim}[theorem]{Claim}

\newtheorem{corollary}[theorem]{Corollary}

\newtheorem{definition}[theorem]{Definition}

\newtheorem{fact}[theorem]{Fact}
\newtheorem{lemma}[theorem]{Lemma}

\newtheorem{proposition}[theorem]{Proposition}
\newtheorem*{proposition*}{Proposition}
\newtheorem{remark}[theorem]{Remark}

\newtheorem{mainlemma}[theorem]{Main Lemma}

\renewcommand*{\vect}[1]{\ensuremath{\mathbf{ #1}}}


\graphicspath{{./figuresl/}}




\usepackage{float}

\begin{document}

\baselineskip 18pt


\title{The interior of randomly perturbed self-similar sets on the line}

\author{Michel Dekking}
\address{Michel Dekking,\fm CWI, Amsterdam and Delft Institute of Applied Mathematics,
 Technical University of Delft,  The Netherlands
}
\email{F.M.Dekking@ewi.tudelft.nl}

\author{K\'{a}roly Simon}
\address{K\'{a}roly Simon,
Department of Stochastics, Budapest University of
Technology and Economics,
MTA-BME Stochastics Research Group, Budapest University of Technology and
Economics, Műegyetem rkp. 3, H-1111 Budapest, Hungary,
Alfr\'ed R\'enyi Institute, Hungary
}\email{simonk@math.bme.hu}

\author{Bal\'{a}zs Sz\'{e}kely}
\address{Bal\'{a}zs Sz\'{e}kely, Department of Stochastics, Budapest University of
Technology and Economics, H-1529 B.O.box 91, Hungary
}\email{szbalazs@math.bme.hu}

\author{N\'{o}ra Szekeres}
\address{N\'{o}ra Szekeres, Department of Stochastics, Budapest University of
Technology and Economics, H-1529 B.O.box 91, Hungary
}\email{szenora269@gmail.com }

 \thanks{2000 {\em Mathematics Subject Classification.} Primary
28A80 Secondary 60J80, 60J85
\\ \indent
{\em Key words and phrases.} Random fractals,  difference of
Cantor sets, Palis conjecture, multi type branching
 processes.\\
\indent The research of Simon was supported by NKFI Foundation
K142169  and the NWO-OTKA common project. The research of
Sz\'{e}kely was supported by HSN Lab}

\begin{abstract} Can we find a self-similar set on the line with positive Lebesgue measure and empty interior?  Currently, we do not have the answer for this question for deterministic self-similar sets. In this paper we answer  this question negatively for random self-similar sets   which are defined with the construction introduced in the paper
Jordan, Pollicott and Simon (Commun. Math. Phys., 2007). For the same type of random self-similar sets we prove the Palis-Takens conjecture which asserts that
at least typically the algebraic difference of dynamically defined Cantor sets is either large in the sense that it contains an interval or small in the sense that it is a set of zero Lebesgue measure.
\end{abstract}

\maketitle

\tableofcontents

\section{Introduction}
In this paper we consider only Random self-similar Iterated Function Systems (RIFS) which are defined on the line and
which can be obtained as a small random perturbation of a deterministic self-similar Iterated Function System (IFS) on the line.
First we give a short description of our results for the expert, and then we provide a more detailed introduction. We do not write "self-similar"
in the abbreviation since all iterated function systems considered in this paper are self-similar (random or deterministic).

\subsection{Informal description of the  main result for experts}
Using the construction introduced by Jordan, Pollicott and Simon
\cite[p. 521]{jordan2007hausdorff}, we define self-similar  Random Iterated Function Sytems (RIFS) $\mathcal{F}$ on the line as follows:
 We start with a self similar IFS  $\mathcal{S}$ on the line and we add a small random additive error to every map
in every step of the iterative construction of the attractor (see Definition \ref{def:RIFS} for the \fm precise definition of RIFSs).
The \fm scaling parts of the \fm similarities of the deterministic IFS $\mathcal{S}$ are left unchanged. So, the similarity dimension $s(\mathcal{F})$
of the RIFS $\mathcal{F}$ is the same \fm as the  similarity dimension of the deterministic self-similar IFS $\mathcal{S}$. Our main result is that
\begin{equation}
    \label{y98}
    s(\mathcal{F})>1    \Longrightarrow  \mathrm{int}(C_{\mathcal{F}})\ne \emptyset, \quad \text{almost surely},
    \end{equation}
where $C_{\mathcal{F}}$ is the attractor of the RIFS $\mathcal{F}$.
This implies that whenever $C_1, \ C_2$ are two independent copies of the attractor of the RIFS $\mathcal{F}$
then the algebraic difference set $C_2-C_1:=\left\{\fmu c_2-c_1:c_1\in C_1, c_2\in C_2 \right\}$ satisfies
\begin{equation}
\label{y44}
s(\mathcal{F})>\frac12
\Longrightarrow
\text{int}(C_2-C_1)\ne \emptyset.
\end{equation}

\subsection{A gentle introduction}
A deterministic self-similar Iterated Function System (IFS) on $\mathbb{R}$ is a finite list of contracting similarities  of $\mathbb{R}$:
\begin{equation}
\label{z02}
\mathcal{S}:=\fm\fm\left\{ S_i: S_i(x)=r_ix+t_i,\; x\in \mathbb{R} \right\}_{i=1}^{L},
\end{equation}
with contractions $r_i\in(-1,1)\setminus\left\{ 0 \right\}$ and translations\fm\footnote{We do not assume that the $t_i$ are distinct} $t_i\in\mathbb{R}$ for all $i\in[L]:=\left\{ 1,\dots,L \right\}$.
The attractor $C_{\mathcal{S}}$ of the IFS $\mathcal{S}$ is what we are left with after infinitely many iterations of the system.
More formally, it is easy to see that we can find a non-degenerate compact interval $I$ such that $S_i(I)\subset I$ holds for all $i\in[L]$.
For all $\mathbf{i}\in[L]^n$ we consider the $n$-fold iterate
\begin{equation}
\label{z01}
S_{\mathbf{i}}:=S_{i_1}\circ\cdots\circ S_{i_n}
\end{equation}
and form the corresponding $n$-cylinder interval $I_{\mathbf{i}}:=S_{\mathbf{i}}(I)$. Then the union of all $n$-cylinders
$\big\{\bigcup\limits_{\mathbf{i}\in[L]^n} I_{\mathbf{i}}  \big\}_{n=1}^{\infty  }$ is a nested sequence of non-empty compact sets. The attractor is their intersection:
\begin{equation}
\label{y99}
C_{\mathcal{S}}:=\bigcap\limits_{n=1 }^{\infty   }
\bigcup\limits_{\mathbf{i}\in[L]^n} I_{\mathbf{i}}.
\end{equation}
In this paper, we consider random IFSs (RIFS) on $\mathbb{R}$, which are small (translational) perturbations
of a self-similar IFS of the form \eqref{z02}.

\medskip

Informally, the attractor of \fmu an RIFS is obtained by a formula similar to \eqref{y99} with the following difference:
Instead of the  deterministic $n$-cylinder intervals $I_{\mathbf{i}}$ in \eqref{y99},
we work with the random intervals  \fm $\widehat{I}_{\mathbf{i}}=f_{i_1}\circ\cdots\circ f_{i_n}(I)$, where  the random mappings $f_{i_k}$ are small translational perturbations of $S_{i_k}$. Namely, $f_{i_k}=S_{i_k}+\fm Y _{i_k} $ for small random translations $ \fm Y _{i_k}$.

The precise description of the distribution of these random translations is given in Definition \ref{def:RIFS}.
The attractor $C_{\mathcal{F}}$ of the RIFS $\mathcal{F}:=\left\{f_i \right\}_{i=1}^{L}$ is defined by a formula analogous to \eqref{y99}\fm: we just replace $I_{\mathbf{i}}$ with $\widehat{I}_{\mathbf{i}}$ in \eqref{y99}.

Jordan, Pollicott, and Simon \cite{jordan2007hausdorff} studied this kind of
 RIFSs in the more general self-affine case. As an immediate consequence of the results in \cite{jordan2007hausdorff}, we get that the Hausdorff dimension $\dim_{\rm H}
 C_{\mathcal{F}}$ is the minimum of $1$ and the similarity dimension $s_{\mathcal{F}}$ (solution of \eqref{eq:S_dim}) almost surely. Moreover, if
 $s_{\mathcal{F}}>1$ then the Lebesgue measure of
 $C_{\mathcal{F}}$ is positive almost surely.

\subsection{Motivation: the interior of the difference of random Cantor sets }

In 1987, Palis and Takens \cite{PT} studying the dynamical behavior of diffeomorfisms presented a conjecture about the size of the algebraic difference of two Cantor sets. Informally,
if the size of the Cantor sets is large (see equation \eqref{P})
then the difference contains an interval. More precisely, if $C_1$ and $C_2$ are two Cantor sets then the algebraic difference
\begin{equation*}
C_2-C_1=\{y-x: x\in C_1,y\in C_2\}
\end{equation*}
contains an interval if
\begin{equation}\label{P}
\dim_{\rm H}C_1+\dim_{\rm H}C_2>1,
\end{equation}
where $\dim_{\rm H}$ denotes the Hausdorff dimension.

In 2001, De Moreira and Yoccoz (\cite{MY}) proved the conjecture for generic  dynamically generated \emph{non-linear} Cantor sets.
The conjecture has not been proven for generic linear Cantor sets.

In 1990, Per Larsson put the problem into a probabilistic context in
\cite{Larssonthesis}, (see also \cite{Larsson}). He considered a
very special family of two parameter random Cantor sets $C_{a,b}$ and proved
the conjecture for a certain subset of $a$'s and $b$'s. Although the main idea
of Larsson's argument is brilliant, unfortunately the proof contains
significant gaps and incorrect reasonings. In 2011, three out of the four
authors of the present paper gave a precise proof for Larsson's family in
\cite{dekking2011algebraic}. We briefly recall the Larsson family from
\cite{dekking2011algebraic}: let
\begin{equation*}\label{2}
a>\frac{1}{4}\quad  \mbox{ and }  \quad 3a+2b<1.
\end{equation*}
\noindent Since
$$\dim_{\rm H} C_{a,b}=-\frac{\log 2}{\log a},$$
the first condition is equivalent to $\dim_{\rm H}C_{a,b}>1/2$,
which is equivalent to equation (\ref{P}).

Larsson's construction is as follows (see also
Figure~\ref{fig:Cantor}): first remove the interval
$$\left[\frac{1}{2}-\frac{a}{2},\frac{1}{2}+\frac{a}{2}\right]$$ from
the middle of $[0,1]$, then the \fm length $b$ parts from both the beginning
and the end of the unit interval. Next, put  intervals of length $a$
according to a uniform distribution in the remaining two gaps
$\left[b,\frac{1}{2}-\frac{a}{2}\right]$ and
$\left[\frac{1}{2}+\frac{a}{2},1-b\right]$. These two randomly
chosen intervals of length $a$ are called the level one intervals of
the random Cantor set $C_{a,b}$.
\begin{figure}[!t]
    \centering
    \includegraphics[width=130mm]{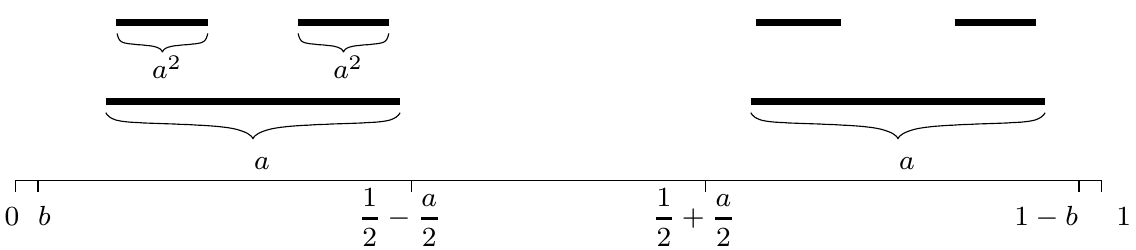}
   \caption{The construction of the Cantor set $C_{a,b}$.
    The figure shows the level 1 and level 2 cylinder intervals $C_{a,b}^{1}$ and $C_{a,b}^{2}$.}
    \label{fig:Cantor}
\end{figure}

We write $C_{a,b}^{1}$ for their union. In both of the two level one
intervals we repeat the same construction independently of each
other and of the previous step. In this way we obtain four disjoint
intervals of length $a^2$.  We emphasize that, because of
independence, the relative positions of these second level intervals
in the first level ones are in general completely different.
Similarly, we construct the $2^n$  level $n$ intervals of length
$a^n$. We call their union $C_{a,b}^{n}$. Then Larsson's random
Cantor set is defined by
$$
C_{a,b}:=\bigcap _{n=1}^\infty C_{a,b}^{n}.
$$

As a corollary of the main result of this paper, we prove that the conjecture by Palis and Takens
holds for a very broad class of \emph{random} linear Cantor sets,
including the Larsson family.

The following result is a generalization of the result in \cite{dekking2011algebraic}:

\begin{theorem}\label{y48}
 Let $\mathcal{F}$ be \fm an RIFS (see Definition \ref{def:RIFS}) with similarity dimension larger than $\frac12$.
Let $C _1$ and $C _2$ be two independent copies of the attractor $C
_{\mathcal{F}}$. Then\\[-.6cm]
$$ C _2-C _1 \mbox{ contains an interval a.s.}$$
\end{theorem}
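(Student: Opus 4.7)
The plan is to derive the theorem as a consequence of the main implication \eqref{y98}. Given two independent RIFSs $\mathcal{F}^{(1)}=\{f_i^{(1)}\}$ and $\mathcal{F}^{(2)}=\{f_j^{(2)}\}$ generating $C_1$ and $C_2$, I would construct a pairwise difference RIFS $\mathcal{G}=\{g_{(i,j)}\}_{(i,j)\in[L]^2}$ on $\mathbb{R}$ whose attractor is $C_2-C_1$ and whose similarity dimension is $2s(\mathcal{F})$. Then the hypothesis $s(\mathcal{F})>\tfrac12$ is precisely $s(\mathcal{G})>1$, and \eqref{y98} applied to $\mathcal{G}$ yields $\mathrm{int}(C_2-C_1)\ne\emptyset$ almost surely, which is equivalent to the assertion of the theorem.

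When the scaling parts $r_i$ of $\mathcal{F}$ share a common value $r$, the construction is direct: writing $f_i^{(\ell)}(x)=rx+t_i+Y_i^{(\ell)}$, one sets
\[
g_{(i,j)}(z)=rz+(t_j-t_i)+(Y_j^{(2)}-Y_i^{(1)}),\qquad z\in\mathbb{R}.
\]
All $L^2$ maps of $\mathcal{G}$ contract by $r$, so $s(\mathcal{G})=\log L^2/\log(1/r)=2s(\mathcal{F})$, and the attractor of $\mathcal{G}$ is exactly $C_2-C_1$. One checks that the random translations $Y_j^{(2)}-Y_i^{(1)}$ inherit centering, boundedness, and the tree-indexed independence structure required by Definition \ref{def:RIFS} directly from the independence of $\mathcal{F}^{(1)}$ and $\mathcal{F}^{(2)}$.

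The main obstacle is the case of non-uniform contractions: when $r_i\ne r_j$, the difference $f_j^{(2)}(y)-f_i^{(1)}(x)=r_j y - r_i x+\text{const}$ does not depend only on $z=y-x$, and the one-step construction above collapses. I would circumvent this by iterating: pass to the level-$n$ RIFS $\mathcal{F}^n$, whose $L^n$ cylinder maps have contractions $r_{\mathbf{i}}=\prod_{k=1}^n r_{i_k}$ satisfying $\sum_{\mathbf{i}\in[L]^n} r_{\mathbf{i}}^{s(\mathcal{F})}=1$. By concentration of $-\tfrac1n\log r_{\mathbf{i}}$ around its mean under the equilibrium weights $r_{\mathbf{i}}^{s(\mathcal{F})}$, a sub-family of $n$-words has contractions essentially constant, with cardinality times the common contraction raised to $s(\mathcal{F})$ still arbitrarily close to $1$. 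Applying the paired-difference construction to this almost-uniform sub-family yields an RIFS $\mathcal{G}_n$ on $\mathbb{R}$ whose attractor is contained in $C_2-C_1$ and whose similarity dimension exceeds $1$ for all $n$ large; \eqref{y98} then forces the attractor of $\mathcal{G}_n$, and hence $C_2-C_1$, to have nonempty interior. The most delicate point will be verifying that the randomness along the paired construction tree of $\mathcal{G}_n$ retains the independence and centering conditions of Definition \ref{def:RIFS}.
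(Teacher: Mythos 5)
Your route is essentially the paper's. Proposition \ref{y46} is exactly your paired-difference construction (the paper calls it $\mathcal{H}^\circleddash$) for a homogeneous system, and Proposition \ref{y89}, proved in the appendix via the Farkas/Peres--Shmerkin counting bound of Lemma \ref{y43}, is exactly the reduction from a general RIFS to a homogeneous sub-system whose similarity dimension is within $\varepsilon$ of $s(\mathcal{F})$. The proof of Theorem \ref{y48} then chains these: pick $\varepsilon<s(\mathcal{F})-\tfrac12$, obtain $\mathcal{H}=\mathcal{F}_U$ with $s(\mathcal{H})>\tfrac12$, form $\mathcal{H}^\circleddash$ with $s(\mathcal{H}^\circleddash)=2s(\mathcal{H})>1$, and invoke Theorem \ref{thm:main}.

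One precision matters in your phrasing. Where you write ``a sub-family of $n$-words has contractions essentially constant,'' the construction requires \emph{exactly} constant. An approximately-constant sub-family runs straight into the obstruction you yourself identify for $r_i\ne r_j$: the map $f^{(2)}_{\mathbf j}(y)-f^{(1)}_{\mathbf i}(x)$ fails to factor through $z=y-x$. The fix, and the content of Lemma \ref{y43}, is to restrict to a single type class $U$ of level-$n$ words, i.e.\ those $\mathbf{i}\in[L]^n$ with a fixed empirical composition $(k_1,\dots,k_L)$; every such word has the literally identical ratio $r_{\mathbf i}=\prod_\ell r_\ell^{k_\ell}$, and Stirling shows the largest type class is big enough that $\#U\cdot r_{\mathbf i}^{\,s}$ stays close to $1$. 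Your ``concentration of $-\tfrac1n\log r_{\mathbf i}$'' is the right intuition, but it is the discreteness of type classes, not mere concentration, that delivers exact equality. Two further points the paper handles that your sketch leaves implicit: Proposition \ref{prop:Positive} first ensures the common ratio can be taken positive (so one lands in the homogeneous setting $a\in(0,1)$ of {\bf A1}), and Lemma \ref{lem:nthorder} verifies that $\mathcal{F}^n$, and hence any sub-family $\mathcal{F}_U$, inherits the tree-indexed independence of Definition \ref{def:RIFS}, settling the concern you flag at the end.
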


 The proof is presented in Section \ref{y35}.

It is important to note that in our setting the Hausdorff dimension equals the \fm (the minimum of 1 and) the similarity dimension given by the unique solution of equation (\ref{eq:S_dim}).

We remark that if the Hausdorff dimension of $C_{\mathcal{F}}$ is
smaller than $\frac{1}{2}$ then the set $C _2-C _1$ has Hausdorff
dimension less than $1$  so it cannot possibly contain any
intervals.

The essential part of the proof of this theorem is completely different of that of the main result in \cite{dekking2011algebraic}.
The proof in \cite{dekking2011algebraic} was tailored for the Larsson's family, and does not have the potential for generalizations.
However, a combination of  the ideas of the proof in
\cite{dekking2011algebraic} with the method introduced in Rams, Simon \cite{Rams-Simon} and also invoking
an observation from Peres, Shmerkin \cite{Peres-Shmerkin} makes it possible to prove a much more general result with a shorter proof.
Namely, both in  \cite{dekking2011algebraic} and the present paper we have to verify that the associated multi-type branching processes are uniformly supercritical, where uniformity is meant in the type of the ancestor.
In both papers this is stated as the Main Lemma and their  proofs follow the same path.
However, the step where using the Main Lemma one  proves the existence of intervals in the arithmetic difference of the random Cantor sets, is where we use the method introduced in \cite{Rams-Simon} and this makes our present proof much more efficient.

We remark here that if \emph{also} the scalings are random, then  the problem becomes easier (\cite{oral-Shmerkin}), and can be treated by a method introduced by Hochman and Shmerkin (\cite{Hochman-Shmerkin}).

\section{RIFS}

\subsection{The formal description of our random Cantor set}\label{304}

First, we give the formal definition of the Random Iterated Functions System (RIFS) $\mathcal{F}$, whose attractor
$C_\mathcal{F}$ is the random Cantor set which is the object of our investigation in this paper.
It is convenient to identify the collection of all finite words over the alphabet $[L]=\{1,2,\dots,L\}$ with the nodes  of the $L$-ary tree $\mathcal{T}$.
\fmu The empty word is identified with the root of $\mathcal{T}$, and  denoted as  $\emptyset$.
For any $n\ge 1$ the level $n$ sets $\mathcal{L}_n$  of $\mathcal{T}$ are defined by
\begin{equation*}
 \mathcal{L}_0=\emptyset, \quad \mathcal{L}_n=\{i_1\dots i_n: i_j\in [L], 1\le j\le n\}.
\end{equation*}

\begin{definition}[RIFS]\label{def:RIFS}
Let
\begin{equation}\label{306}
  \mathcal{F}=\left\{ f_i:f_i(x)=r_ix+D_i\right\}_{i=1}^{L}.
\end{equation}
The contraction ratios
$r_1,\dots ,r_L\in (-1,1)\setminus{\{0\}}$ are deterministic.
 About the random translations  $(D_1,\dots,D_L)$,  of the functions in $\mathcal{F}$ in  (\ref{306}), we assume the following:

\bigskip

\begin{enumerate}[{\bf (a)}]
\item
{\emph{ $(D_1,\dots,D_L)$ is an $L$-dimensional random variable such that for
    any $i=1,\dots,L$, the random variable $D_i$ is  absolutely continuous w.r.t. the Lebesgue
    measure, with a density function $\varphi _i$ which is strictly positive, bounded and continuous on $(t_i-\theta_i,t_i+\theta_i)$} and \fm $\varphi_i$ is zero outside $(t_i-\theta_i,t_i+\theta_i)$}, \fm where the $t_i$ and $\theta_i>0$ are real numbers.
\item To define the random translations of the iterates of this system we introduce
$$\ds \fm \left\{D^{(\vect{i})}= \fm \left(D^{(\vect{i})}_1,\dots,\fm D^{(\vect{i})}_L \right) \right\}_{\vect{i}\in\mathcal{T}}$$
as a set of i.i.d.~random \fm vectors having the same distribution as that of $(D_1,\dots,D_L)$.

 The iterates  $f_{\mathbf{i}}$ for $\mathbf{i}\in  \mathcal{L}_n$  are defined as follows:
\begin{eqnarray*}\label{307}
\nonumber  f_\mathbf{i}(x) &=& f_{i_1}\circ\dots\circ f_{i_n}(x)\\
   &=& r_{i_1}\left(r_{i_2}\left(\dots \left(r_{i_{n-1}}(r_{ i_n}x+D^{(i_1\dots
   i_{n-1})}_{i_n})+D^{(i_1\dots i_{n-2})}_{i_{n-1}}\right)\dots
   \right)+D^{(i_1)}_{i_2}\right)+D^{\emptyset}_{i_1}\\
\nonumber   &=& r_{\mathbf{i}}x+T_{\mathbf{i}}
\end{eqnarray*}
where  $r_{\vect{i}}=r_{i_1}\cdots r_{i_n}$ and
\begin{equation}\label{eq:311}
T_{\vect{i}}= D^{\emptyset}_{i_1}+r_{i_1}D^{(i_1)}_{i_2}+r_{i_1}r_{i_2}D^{(i_1i_2)}_{i_3}+\ldots
+ r_{i_1}\cdots r_{ i_{n-1}}D^{(i_1\dots i_{n-1})}_{i_n}.
\end{equation}

\end{enumerate}

\end{definition}

Using that the random mappings $f_i$, $i\in[L]$ are contractions and the supports of  the $D_i$ are bounded, we immediately obtain that
there  exists  a deterministic interval $[\alpha,\beta]$ such that
\begin{equation}
\label{y84}
f_i([\alpha,\beta])\subset [\alpha,\beta],\ \text{ for all } i\in[L].
\end{equation}

\fm We call $[\alpha,\beta]$ a \emph{supporting interval} for $\mathcal{F}$.

\medskip

The attractor $C _{\mathcal{F}}$ of the RIFS $\mathcal{F}$  is defined by
\begin{equation}
\label{y83}
C _{\mathcal{F}}=\bigcap\limits _{n=1}^{\infty   }
\bigcup\limits_{|\mathbf{i}|=n}  f_{\mathbf{i}}([\alpha,\beta]).
\end{equation}

\begin{figure}[ht!]
    \includegraphics[width=140mm]{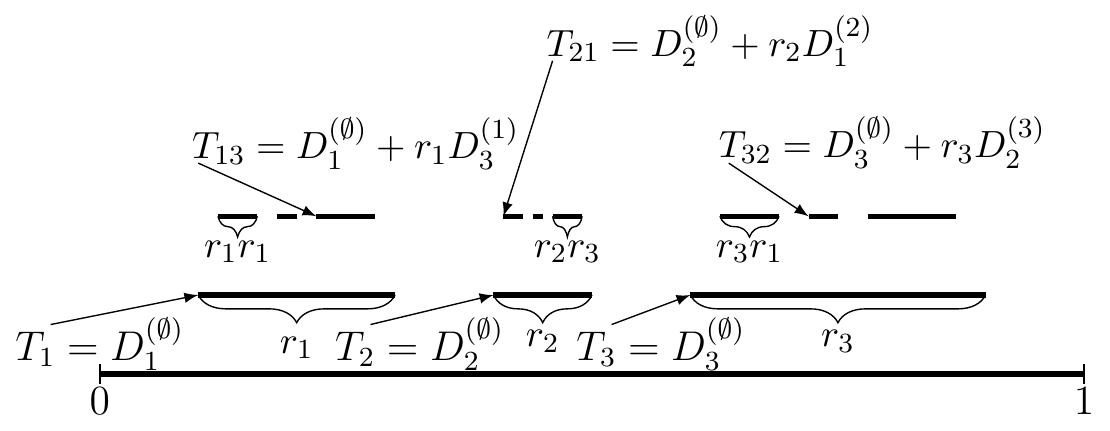}
    \caption{Level 1 and 2 cylinder intervals of our Cantor set when $L=3$ and  $[\alpha,\beta]=[0,1]$. The randomly chosen
    left endpoints $T_i$ and some \fma of the $T_{ij}$, $i,j=1,2,3$ are  indicated.\fma}
        \label{fig:Cantor_set}
    \end{figure}

\subsection{The ambient probability space}\label{Probspace}
 The sample space is $\Omega :=[\mathbb{R}^L]^{\mathcal{T}}$.
The corresponding $\sigma$-algebra $\mathcal{B}$ is the generated Borel $\sigma$-algebra. The probability measure of our Cantor set is
\begin{equation*}
\pr=\prod_{\vect{i}\in \mathcal{T}}d\left(D^{(\vect{i})}\right),
\end{equation*}
where $d(X)$ denotes the probability distribution of a random vector $X$.
Then the ambient probability space is \fm $(\Omega ,\mathcal{B},\pr)$.
A realization $\pmb{\omega}\in\Omega $ is a labelled tree, $\pmb{\omega}=\left\{ \omega _{\mathbf{i}} \right\}_{\mathbf{i}\in \mathcal{T}}$,
where $\omega _{\mathbf{i}}$ is defined for an $\mathbf{i}=i_1\dots  i_n$ as follows:
$$
\omega _{i_1}=D _{i_1}^{\emptyset  }(\pmb{\omega}) \text{ if $n=1$ and }
\text{ and }
\omega _{i_1\dots  i_n}=D _{ i_n}^{(i_1\dots  i_{n-1}) }(\pmb{\omega}) \fm \text{ if } n>1.
$$
 According to \eqref{eq:311}, \fm $T_{\mathbf{i}}(\pmb{\omega}) = \omega _{i_1}+\sum _{k=2}^{n} r_{i_1\dots  i_{k-1}}\omega_{i_1\dots  i_k}.$

 The dimension theory of the RIFS described above is well understood.
 The following Fact is a  \fm direct  consequence of the results in
 \cite{jordan2007hausdorff}.
\begin{fact}[Dimension of an RIFS]\label{prop:S_dim}
Let $\mathcal{F}$ be an RIFS of size $L$ and let $s(\mathcal{F})$
denote the solution \fm to the equation
\begin{equation}\label{eq:S_dim}
\sum_{i=1}^L|r_i|^{s(\mathcal{F})}=1.
\end{equation}
We say that $s(\mathcal{F})$ is the similarity dimension of $\mathcal{F}$.
Then we have for almost all realizations:
\begin{equation*}
\dimH C_{\mathcal{F}}= \overline{\mathrm{dim}}_BC_{\mathcal{F}}=
\underline{\mathrm{dim}}_BC_{\mathcal{F}}=
\min\left\{ 1, s(\mathcal{F}) \right\}
\end{equation*}
Moreover, if $s(\mathcal{F})>1$ then for almost all realizations:
\begin{equation}
\label{z61}
\Leb\left( C_{\mathcal{F}} \right)>0,
\end{equation}
where \Leb$(\cdot)$ is the $1$-dimensional Lebesgue measure.
\end{fact}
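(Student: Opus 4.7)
The plan is to follow the general framework of Jordan, Pollicott and Simon~\cite{jordan2007hausdorff}, specialized and simplified in the self-similar one-dimensional case. The similarity dimension $s := s(\mathcal{F})$ provides the natural upper bound: covering $C_\mathcal{F}$ by the $n$-th level cylinders $f_\mathbf{i}([\alpha,\beta])$ of length $|r_\mathbf{i}|(\beta-\alpha)$, one has
\begin{equation*}
\sum_{|\mathbf{i}|=n}|r_\mathbf{i}|^{s}=\Bigl(\sum_{i=1}^{L}|r_i|^{s}\Bigr)^{n}=1,
\end{equation*}
so the $s$-dimensional Hausdorff premeasures are bounded deterministically. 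The same counting yields $\overline{\dim}_B C_\mathcal{F}\le s$ almost surely. Since $\dim_H\le\underline{\dim}_B\le\overline{\dim}_B$ always, and $\dim_H C_\mathcal{F}\le 1$ trivially, the upper inequalities in the displayed chain of equalities are free.

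For the lower bound, I would introduce the natural random Mandelbrot measure $\mu_{\pmb{\omega}}$ on $C_\mathcal{F}$ defined as the push-forward under the coding map $\mathbf{i}\mapsto\lim_n f_{i_1\dots i_n}(0)$ of the Bernoulli measure on $[L]^{\mathbb{N}}$ with weights $p_i=|r_i|^{\min(s,1)}$. Fixing $0<t<\min(s,1)$, the goal is to show the expected $t$-energy is finite:
\begin{equation*}
\mathbb{E}\Bigl[\iint\frac{d\mu_{\pmb{\omega}}(x)\,d\mu_{\pmb{\omega}}(y)}{|x-y|^{t}}\Bigr]<\infty.
\end{equation*}
Splitting the double integral according to the length $k$ of the common prefix $\mathbf{i}\wedge\mathbf{j}$, one uses that the difference of two cylinder positions with common ancestor of length $k$ is $r_{\mathbf{i}\wedge\mathbf{j}}\cdot(X-Y)$, where $X$ and $Y$ depend on independent subtrees of random perturbations. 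The hypothesis that each $\varphi_i$ is bounded and strictly positive on its open support in Definition \ref{def:RIFS} implies that the density of $X-Y$ is bounded, so $\mathbb{E}[|X-Y|^{-t}]\le C_t<\infty$ for $t<1$. Summing the geometric series in $k$ weighted by $|r_{\mathbf{i}\wedge\mathbf{j}}|^{-t}p_{\mathbf{i}\wedge\mathbf{j}}^{2}$ converges precisely when $t<s$, by the defining relation for $s$. Frostman's lemma then gives $\dim_H C_\mathcal{F}\ge t$ a.s., and letting $t\uparrow\min(s,1)$ finishes the lower bound.

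When $s>1$, for the stronger statement \eqref{z61} I would reuse the above measure but now with weights $p_i=|r_i|/\sum_j|r_j|$ and upgrade the energy estimate to an $L^2$ estimate. A nearly identical splitting argument, exploiting the uniform bound on the density of the independent increments, shows
\begin{equation*}
\mathbb{E}\Bigl[\iint\mathbf{1}_{\{|x-y|\le\varepsilon\}}\,d\mu_{\pmb{\omega}}(x)\,d\mu_{\pmb{\omega}}(y)\Bigr]=O(\varepsilon),
\end{equation*}
so $\mu_{\pmb{\omega}}$ is almost surely absolutely continuous with an $L^2$ density. In particular $\mathrm{supp}(\mu_{\pmb{\omega}})\subseteq C_\mathcal{F}$ has positive Lebesgue measure. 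The main obstacle is precisely this transversality estimate: verifying that the random error $T_\mathbf{i}-T_\mathbf{j}$ is well-behaved enough that conditioning on the ancestor up to level $k$, the remaining contribution behaves like an absolutely continuous perturbation with bounded density, uniformly in $\mathbf{i},\mathbf{j}$. In the self-similar, one-dimensional setting this is straightforward because the random shifts enter linearly in \eqref{eq:311}, but it is exactly this step that forced \cite{jordan2007hausdorff} to work much harder in the self-affine case.
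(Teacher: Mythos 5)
The paper does not prove this Fact itself but cites it as a direct consequence of Jordan--Pollicott--Simon \cite{jordan2007hausdorff}; your sketch is a correct reconstruction of that argument specialized to the one-dimensional self-similar case (deterministic covering bound for the upper estimate, expected $t$-energy of a random Mandelbrot measure with a transversality/bounded-density estimate for the lower estimate, and an $L^2$-density second-moment argument for positive Lebesgue measure when $s>1$). One small slip: the weights $p_i=|r_i|^{\min(s,1)}$ do not sum to $1$ when $s>1$ (since then $\sum_i|r_i|>1$); taking $p_i=|r_i|^s$ throughout makes the energy sum $\bigl(\sum_i|r_i|^{2s-t}\bigr)^k$ converge for all $t<s$ as you intend, and in any case when $s>1$ the dimension equality already follows from the positive Lebesgue measure you establish separately with the correctly normalized weights $p_i=|r_i|/\sum_j|r_j|$.
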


With these definitions we can state our Main Theorem:
\begin{theorem}[Main Theorem]\label{thm:main}
Let $C_{\mathcal{F}}$ be the attractor of  an RIFS $\mathcal{F}$ with $s(\mathcal{F})>1 $. Then
\begin{equation}
\label{z60}
\mathrm{int}(C_{\mathcal{F}})\ne \emptyset.
\end{equation}
\end{theorem}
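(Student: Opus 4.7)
My plan is to recast the problem as the survival of a multi-type Galton--Watson branching process tracking fully covered sub-intervals of the supporting interval $[\alpha,\beta]$. Fix a large integer $N$, set $h:=(\beta-\alpha)/N$, and partition $[\alpha,\beta]$ into type intervals $J_1,\ldots,J_N$ of length $h$. Let $p_\tau:=\pr(J_\tau\subset C_{\mathcal F})$. The crucial observation is a Markov-type property of the RIFS: inside every cylinder $I_{\vect i}:=f_{\vect i}([\alpha,\beta])$, the rescaled set $f_{\vect i}^{-1}(C_{\mathcal F}\cap I_{\vect i})$ contains a fresh, independent copy of $C_{\mathcal F}$ driven by the random translations $\{D^{(\vect i\vect j)}\}_{\vect j}$, which are independent of the randomness outside the subtree rooted at $\vect i$. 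Consequently, conditional on the $n$-th generation randomness, the events $\{f_{\vect i}(J_\tau)\subset C_{\mathcal F}\}_{|\vect i|=n}$ are independent with common probability $p_\tau$, so once I establish $p_\tau>0$ for some $\tau$, the estimate
\[
\pr\bigl(\exists\ \vect i\ \text{with}\ |\vect i|=n:\ f_{\vect i}(J_\tau)\subset C_{\mathcal F}\bigr)\ge 1-(1-p_\tau)^{L^n}\to 1\text{ as }n\to\infty
\]
forces $\mathrm{int}(C_{\mathcal F})\ne\emptyset$ almost surely.

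To prove that $p_\tau>0$ for some $\tau$, I iterate the recursion coming from the first level of the construction: $J_\tau\subset C_{\mathcal F}$ iff the cover $\bigcup_{j=1}^L I_j$ leaves no gap inside $J_\tau$ and, for every $j$ with $J_\tau\cap I_j\ne\emptyset$, the rescaled piece $f_j^{-1}(J_\tau\cap I_j)\subset[\alpha,\beta]$ lies inside the fresh copy of $C_{\mathcal F}$ inside $I_j$. Rounding these rescaled pieces to nearest types turns the recursion into a multi-type branching process with types in $[N]$, whose offspring distribution is governed by the absolutely continuous level-one translations and which is coupled to a "no-gap" condition at each node.

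The technical heart is the Main Lemma: for $N$ large enough and a suitable block step $k_0\in\mathbb N$, the mean offspring matrix of the $k_0$-step compounded branching process has Perron--Frobenius eigenvalue strictly greater than one, uniformly in the starting type. The input is $s(\mathcal F)>1$, which via \eqref{eq:S_dim} gives $\sum_j|r_j|>1$, so the total rescaled length of the level-$k_0$ sub-cylinders exceeds the unit length; combined with the boundedness and strict positivity of the densities $\varphi_i$ on $(t_i-\theta_i,t_i+\theta_i)$, this should yield that the expected number of completely covered sub-type intervals per ancestor exceeds one. I expect this eigenvalue estimate to be the main obstacle: it generalizes the Main Lemma of \cite{dekking2011algebraic} beyond the Larsson family, and forces one to exploit only the continuity and positivity of the $\varphi_i$ on intervals, rather than the concrete uniform distribution used there.

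Once the Main Lemma is in place, I apply the Rams--Simon method of \cite{Rams-Simon}, together with the Peres--Shmerkin observation from \cite{Peres-Shmerkin}, to convert uniform supercriticality into $p_\tau>0$. The point of the Rams--Simon step is to circumvent the apparent requirement that \emph{every} lineage of the branching process survive in order to obtain a gap-free cover at all scales: instead, one constructs a supercritical auxiliary process whose standard (one-lineage) non-extinction already guarantees $J_\tau\subset C_{\mathcal F}$ with positive probability. The almost-sure conclusion then follows from the independence argument of the first paragraph.
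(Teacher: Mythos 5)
Your outer reduction in the first paragraph is correct and in fact slightly cleaner than the paper's endgame: if $p_\tau=\pr(J_\tau\subset C_{\mathcal F})>0$ for some fixed subinterval $J_\tau$, then since the fresh attractors $C^{(\vect i)}$ rooted at the $L^n$ disjoint level-$n$ subtrees are i.i.d.\ copies of $C_{\mathcal F}$, one indeed gets $\pr\bigl(\exists\,\vect i\in\mathcal L_n:\ J_\tau\subset C^{(\vect i)}\bigr)=1-(1-p_\tau)^{L^n}\to1$, and each such event forces $f_{\vect i}(J_\tau)\subset C_{\mathcal F}$, hence nonempty interior almost surely. The paper does not use this independence bootstrap; it instead derives the estimate $\pr(C_{\mathcal H}\text{ contains an interval})>1-\xi$ directly from its Lemma \ref{lem:last} and lets $\xi\to0$. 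Either endgame works.

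The genuine gap is in the branching process you build to show $p_\tau>0$. Your process has types given by ``fully covered sub-intervals'' and an offspring rule in which a node of type $\tau$ survives only if the level-one cylinders leave no gap inside $J_\tau$ \emph{and} every rescaled piece $f_j^{-1}(J_\tau\cap I_j)$ is covered in its subtree. That is an \emph{and} over children, not an \emph{or}; it is the opposite of Galton--Watson survival, and the Harris supercriticality machinery (Perron--Frobenius eigenvalue $>1\Rightarrow$ positive survival probability) does not apply to it. Saying that Rams--Simon ``constructs a supercritical auxiliary process whose one-lineage non-extinction guarantees $J_\tau\subset C_{\mathcal F}$'' is not what that method does: it cannot convert an every-child-must-live constraint into an ordinary GW survival event for free. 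On top of this, ``rounding the rescaled pieces to nearest types'' is lossy in both directions and is never reconciled with the containment you actually need, and the no-gap event is a joint constraint on the level-one translations that is strongly correlated with the offspring types, so the offspring are not conditionally independent in the way a multi-type GW process requires. You also never perform the reduction to a homogeneous system (Proposition \ref{y89} and Proposition \ref{prop:Positive}); without equal, positive contraction ratios your rescaled pieces $f_j^{-1}(J_\tau\cap I_j)$ have wildly different lengths, so the finite type set $[N]$ is not stable under the recursion.

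The paper's construction sidesteps all of this by changing what the branching process tracks. Instead of ``is the interval $J_\tau$ fully covered,'' it fixes a point $x$ in a type space $T(\varepsilon)\subset[\alpha,\beta]$ and lets $\mathcal Z_n(x,\cdot)$ count the level-$n$ cylinders $J_{\vect i}$ that contain $x$ with $H_{\vect i}^{-1}(x)\in T(\varepsilon)$; the ``type'' of a descendant is the relative position $H_{\vect i}^{-1}(x)\in T(\varepsilon)$. This \emph{is} a bona fide continuous-type branching process in the sense of Harris \cite{Harris63}, so part~(2) and~(3) of the Main Lemma directly give a dominant eigenvalue $\rho>1$. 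The Rams--Simon step (Lemmas \ref{lem:34}, \ref{lem:Cramer}, \ref{lem:last}) then upgrades survival at a single $x$ to exponential growth of $\mathcal Z_n(x,\cdot)$ \emph{uniformly for all $x$ in a small random interval $\J$}, via a first-moment argument to find $\J$, a net of sample points $X_k$, Hoeffding/Cram\'er concentration, and a Lipschitz-in-$x$ observation (shrinking the type set from $T(\varepsilon)$ to $W=T(\varepsilon+\eta)$ buys slack so that if $x'$ is good then all $x$ within $\eta a^{n+kr}$ of $x'$ are good). That uniformity over $x\in\J$ is what replaces your ``no-gap'' condition. So to repair your proposal you would need to (i) first reduce to a homogeneous system, and (ii) replace the interval-coverage process by the point-coverage process parametrized by a starting point $x$, after which the recursion is honestly GW and the Rams--Simon machinery applies as written.
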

The proof is presented in Section \ref{y35}.

\subsection{The $n$-th order RIFS}\label{z59}

To introduce \fm the notion of an $n$-th order RIFS  we  consider the size $n$ subtrees of $\mathcal{T}$ defined by
\begin{equation*}
 \mathcal{T}_n(\emptyset)=\bigcup_{j=0,\dots, n-1}\mathcal{L}_j,
\quad \mathcal{T}_n(\vect{k})=\bigcup_{j=0,\dots, n-1}\{\vect{i}\in\mathcal{L}_{\ell+j}: i_1\dots i_\ell=\vect{k}\},
\end{equation*}
for each \fm $\vect{k}=k_1\dots k_\ell$.
\begin{definition}\label{def:nthorder}
Let $\mathcal{F}=\{f_i\}_{i=1}^L$ be an RIFS.
The  $n$-th order of $\mathcal{F}$, written as $\mathcal{F}^n$, is defined for $n=1,2,\dots$ by
\begin{equation*}
 \mathcal{F}^n=\{f_{\vect{i}}\}_{\vect{i}\in \mathcal{L}_n},
\end{equation*}
\end{definition}

Actually $\mathcal{F}^n$ is itself an RIFS.

\begin{lemma}\label{lem:nthorder}
Let $\mathcal{F}$ be an RIFS  and let  $\mathcal{F}^n$ \fm be its $n$-th order, for some $n\ge 1$.
Then  $\mathcal{F}^n$ is an RIFS.
\end{lemma}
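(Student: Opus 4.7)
The plan is to verify, one by one, the three parts of Definition \ref{def:RIFS} for the collection $\mathcal{F}^n = \{f_{\mathbf{i}}\}_{\mathbf{i}\in\mathcal{L}_n}$. From the definition of the iterate, each map can be written as $f_{\mathbf{i}}(x) = r_{\mathbf{i}} x + T_{\mathbf{i}}$ with $r_{\mathbf{i}} = r_{i_1}\cdots r_{i_n}$ and $T_{\mathbf{i}}$ given by \eqref{eq:311}, so $\mathcal{F}^n$ is a list of $L^n$ similarities whose (deterministic) contraction ratios $r_{\mathbf{i}}$ lie in $(-1,1)\setminus\{0\}$. Hence the deterministic part of Definition \ref{def:RIFS} is immediate.

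The main work is verifying the density condition (a) for each marginal $T_{\mathbf{i}}$. Fix $\mathbf{i}=i_1\cdots i_n$ and observe that the summands $D^{\emptyset}_{i_1}, D^{(i_1)}_{i_2},\dots, D^{(i_1\cdots i_{n-1})}_{i_n}$ appearing in \eqref{eq:311} are coordinates of \emph{distinct} random vectors in the i.i.d.\ family $\{D^{(\vect{k})}\}_{\vect{k}\in\mathcal{T}}$, hence are mutually independent. Each one, by Definition \ref{def:RIFS}(a), has a density $\varphi_{i_k}$ which is continuous, strictly positive and bounded on an open interval $(t_{i_k}-\theta_{i_k},t_{i_k}+\theta_{i_k})$ and zero outside. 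Scaling by the nonzero factor $r_{i_1}\cdots r_{i_{k-1}}$ preserves these three properties, and then convolving finitely many such densities yields a density for $T_{\mathbf{i}}$ that is again continuous, strictly positive on the (open) Minkowski sum of the supports, bounded (each convolution with an $L^1$ density preserves the $L^\infty$ bound), and vanishing outside. This gives condition (a).

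For condition (b) we must produce, from the data already at hand, an i.i.d.\ family of vectors distributed as $(T_{\mathbf{i}})_{\mathbf{i}\in\mathcal{L}_n}$ and indexed by the nodes of the $L^n$-ary tree $\mathcal{T}^{(n)}$ associated with $\mathcal{F}^n$. The natural bijection between $\mathcal{T}^{(n)}$ and the nodes of $\mathcal{T}$ at levels $0,n,2n,\dots$ identifies each node $\vect{k}\in\mathcal{T}^{(n)}$ with the root of a depth-$n$ subtree of $\mathcal{T}$; defining the $\mathcal{F}^n$-translation vector at $\vect{k}$ by the formula analogous to \eqref{eq:311} applied to the $D^{(\vect{\ell})}$'s sitting on that subtree gives the required family. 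Since these subtrees are pairwise disjoint in $\mathcal{T}$, the i.i.d.\ property of $\{D^{(\vect{k})}\}_{\vect{k}\in\mathcal{T}}$ transfers directly to the i.i.d.\ property of the resulting family, with the correct joint law inherited from that of $(T_{\mathbf{i}})_{\mathbf{i}\in\mathcal{L}_n}$.

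The only non-routine step is the convolution argument in the second paragraph; everything else is bookkeeping about the tree structure. Note we do \emph{not} need the components $\{T_{\mathbf{i}}\}_{\mathbf{i}\in\mathcal{L}_n}$ to be jointly independent (they are not, since they share top-of-the-tree randomness), because Definition \ref{def:RIFS}(a) only constrains marginals. This is the source of the slight subtlety, and the only place where one must be careful not to ask more than the definition requires.
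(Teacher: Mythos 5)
Your proof is correct and follows the same route as the paper: express each $f_{\mathbf{i}}$ as $r_{\mathbf{i}}x+T_{\mathbf{i}}$, verify the marginal density condition for each $T_{\mathbf{i}}$ by noting it is a sum of independent scaled copies of $D$-components and hence a finite convolution, and obtain the i.i.d.\ iteration scheme from the disjointness of the depth-$n$ subtrees rooted at levels that are multiples of $n$. The only difference is that you spell out the convolution argument (continuity, boundedness, strict positivity on the open Minkowski sum of supports) which the paper's proof asserts in one sentence, and you make explicit the observation that condition~(a) of Definition~\ref{def:RIFS} constrains only marginals; both points are sound and in the same spirit as the paper.
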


\begin{proof}

Recall that the elements of $\mathcal{F}^n$ indeed have the form $f_{\vect{i}}(x)=r_{\vect{i}}x+T_{\vect{i}}$, and (see \eqref{eq:311}) that the
  random translations $T_{\vect{i}}$ satisfy \fm
\begin{equation}\label{eq:3111}
T_{\vect{i}}= D^{\emptyset}_{i_1}+r_{i_1}D^{(i_1)}_{i_2}+r_{i_1}r_{i_2}D^{(i_1i_2)}_{i_3}+\ldots
+ r_{i_1}\cdots r_{i_1\dots i_{n-1}}D^{(i_1\dots i_{n-1})}_{i_n}.
\end{equation}

Note first that  $(T_{\vect{i}}:\vect{i}\in \mathcal{L}_n)$   is an $L^n$-dimensional random vector such that for any $\vect{i}$, the random variable $T_{\vect{i}}$ is bounded, and absolutely continuous w.r.t. the Lebesgue measure, supported \fm on  an interval  with strictly positive and continuous density on its interior.

\bigskip

Let us write $\overrightarrow{D}=(D_1,\dots,D_L)$, and \fm $\overrightarrow{D}^{\vect{j}}=(D_1^{(\vect{j})},\dots,D_L^{(\vect{j})})$ for $\vect{j}\in\mathcal{T}$, where the $D_i^{(\vect{j})}$ are i.i.d.~for $i\in{L}$.
All the random variables in Equation (\ref{eq:3111}) are coming from i.i.d. random vectors $\overrightarrow{D}^{\vect{j}}$ with $\vect{j}\in \mathcal{T}_n(\emptyset)$.

For a set of nodes $\mathcal{N}$, let us write $\overrightarrow{D}^{\mathcal{N}}$ for the random vector
with elements $\overrightarrow{D}^{\vect{i}}$ with $\vect{i}\in \mathcal{N}$.

We see from Equation (\ref{eq:3111}) that for $\vect{i}\in  \mathcal{L}_n$ the translation of $f_{\vect{i}}$ is completely determined by $\overrightarrow{D}^{ \mathcal{T}_n(\emptyset)}$. More generally, for all $k\ge 1$ the translations of  $\vect{i}\in  \mathcal{L}_{kn}$ are completely determined by $\overrightarrow{D}^{ \mathcal{T}_n(\vect{j})}$, where $\vect{j}$ is the unique ancestor of $\vect{i}$ in $\mathcal{L}_{(k-1)n}$. Note that $\overrightarrow{D}^{ \mathcal{T}_n(\vect{j})}$ has the same distribution as $\overrightarrow{D}^{ \mathcal{T}_n(\emptyset)}$ for all $\vect{j}\in \mathcal{T}$. Since the $\overrightarrow{D}^{ \mathcal{T}_n(\vect{j})}$ are independent for all $k\ge 1$ and $\vect{j}\in \mathcal{L}_{(k-1)n}$, by disjointness of the size $n$ subtrees rooted at the levels that are multiples of $n$, it follows that we have the required independent iteration scheme for $\mathcal{F}^n$.
 \end{proof}

 \bigskip

 At the cost of lowering the similarity dimension with an arbitrary small amount,
we can assume that all the scalings in an RIFS are positive: see \cite[Lemma 2.10]{farkas2019dimension} and a remark in the proof of Proposition 6 in \cite{Peres-Shmerkin}\fm.
For completeness we combine \fm the results of these references in the following proposition and its proof.

\begin{proposition}\label{prop:Positive}  Let $\mathcal{F}=\left\{f_i:f_i(x)=r_ix+D_i\right\}_{i=1}^{L}$ be an RIFS with attractor $C_{\mathcal{F}}$.\, Then there exists for every $\varepsilon>0$ an RIFS $\widetilde{\mathcal{F}}$ with  $C_{\widetilde{\mathcal{F}}}\subset  C_{\mathcal{F}}$, such that all the contraction ratios   of the similarities in $\widetilde{\mathcal{F}}$ are positive, and $s(\widetilde{\mathcal{F}})>s(\mathcal{F})-\varepsilon $.
\end{proposition}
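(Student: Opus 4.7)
My plan is to set $\widetilde{\mathcal{F}}$ equal to the sub-RIFS of the $n$-th order system $\mathcal{F}^n$ (available via Lemma \ref{lem:nthorder}) that retains only those iterates $f_{\vect{i}}$, $\vect{i}\in\mathcal{L}_n$, whose composite scaling $r_{\vect{i}}=r_{i_1}\cdots r_{i_n}$ is positive; here $n$ is a parameter to be chosen large in terms of $\varepsilon$. Since $\mathcal{F}^n$ has the same attractor as $\mathcal{F}$ and $\widetilde{\mathcal{F}}$ is a subcollection of its maps, the inclusion $C_{\widetilde{\mathcal{F}}}\subset C_{\mathcal{F}}$ is automatic. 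It then remains to check (i) that $\widetilde{\mathcal{F}}$ still fits Definition \ref{def:RIFS}, and (ii) that $s(\widetilde{\mathcal{F}})>s(\mathcal{F})-\varepsilon$ once $n$ is large enough.

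For (ii) I would use a sign-expansion. Writing $s=s(\mathcal{F})$ and
\[
A=\sum_{i:\,r_i>0}|r_i|^{s},\qquad B=\sum_{i:\,r_i<0}|r_i|^{s},
\]
we have $A+B=1$. A word $\vect{i}\in\mathcal{L}_n$ satisfies $r_{\vect{i}}>0$ exactly when it contains an even number of negative letters, and expanding $(A+B)^n$ and $(A-B)^n$ and adding them shows
\[
\sum_{\vect{i}\in\mathcal{L}_n,\ r_{\vect{i}}>0}|r_{\vect{i}}|^{s}=\tfrac{1}{2}\bigl(1+(A-B)^n\bigr).
\]
In the nontrivial case $A,B>0$ we have $|A-B|<1$, so this quantity tends to $1/2$ as $n\to\infty$ (the edge cases $A=0$ and $B=0$ being trivial: in the latter all scalings are already positive, and in the former any even $n$ works). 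Setting $r_{\max}:=\max_i|r_i|<1$ and using $r_{\vect{i}}\le r_{\max}^n$, I would then estimate
\[
\sum_{\vect{i}\in\mathcal{L}_n,\ r_{\vect{i}}>0}r_{\vect{i}}^{\,s-\varepsilon}\ge r_{\max}^{-n\varepsilon}\cdot\tfrac{1}{2}\bigl(1+(A-B)^n\bigr),
\]
which blows up with $n$ and hence exceeds $1$ for all $n$ sufficiently large. Monotonicity of $\sigma\mapsto\sum_{\vect{i}:\,r_{\vect{i}}>0}r_{\vect{i}}^{\sigma}$ then forces $s(\widetilde{\mathcal{F}})>s-\varepsilon$.

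I expect the main obstacle to be (i) rather than the estimate above: one must verify that after discarding the ``negative'' branches of $\mathcal{F}^n$, the retained translations $\{T_{\vect{i}}:\vect{i}\in\mathcal{L}_n,\,r_{\vect{i}}>0\}$ still form a random vector whose marginals are absolutely continuous with bounded, strictly positive, continuous densities on the interior of their compact supports, and that the independent tree-iteration structure of $\mathcal{F}^n$ — organised via the subtrees $\mathcal{T}_n(\vect{j})$ in the proof of Lemma \ref{lem:nthorder} — restricts cleanly to the retained indices. Both points are essentially routine consequences of that construction, since marginals inherit the required density properties and the subtree decomposition only uses disjointness, but they should be spelled out to conclude that $\widetilde{\mathcal{F}}$ is a bona fide RIFS in the sense of Definition \ref{def:RIFS}.
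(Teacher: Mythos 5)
Your proof is correct, and it takes a genuinely different route from the paper's. The paper does not discard the branches of $\mathcal{F}^n$ with $r_{\vect{i}}<0$; instead it keeps all $L^n$ words and ``repairs'' each negative one by replacing the scaling $r_{\vect{i}}$ with $r_1 r_{\vect{i}}$ (in effect composing once more with $f_1$), so that every word of $\mathcal{L}_n$ contributes a positive-slope map, and then it bounds the perturbed sum from below by the one-line estimate $\sum_{\vect{i}}|\widetilde{r_{\vect{i}}}|^{s-\varepsilon}\ge |r_1|^{s-\varepsilon}\bigl(\sum_i|r_i|^{s-\varepsilon}\bigr)^n>1$ for $n$ large. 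Your construction instead keeps only the sub-collection $\{f_{\vect{i}}:\vect{i}\in\mathcal{L}_n,\ r_{\vect{i}}>0\}$ of $\mathcal{F}^n$, which has the advantage that the inclusion $C_{\widetilde{\mathcal{F}}}\subset C_{\mathcal{F}}$ and the RIFS property are immediate --- this is exactly the ``$\mathcal{F}_U$ is a subsystem of $\mathcal{F}^n$'' point the paper itself invokes just before Proposition \ref{y89} --- whereas in exchange you need the sign expansion $\sum_{r_{\vect{i}}>0}|r_{\vect{i}}|^{s}=\tfrac12\bigl(1+(A-B)^n\bigr)$ to certify that the retained mass stays bounded away from $0$ before you multiply by $r_{\max}^{-n\varepsilon}$. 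Your handling of the edge cases $A=0$ and $B=0$ and the final monotonicity step are correct, and your flagged point (i) is indeed routine for the reason you give (marginals of $T_{\vect{i}}$ inherit bounded, continuous, strictly positive densities, and the disjoint-subtree iteration structure of Lemma \ref{lem:nthorder} is unaffected by dropping some children). Net comparison: your argument is somewhat more transparent on the inclusion $C_{\widetilde{\mathcal{F}}}\subset C_{\mathcal{F}}$, while the paper's is shorter on the counting estimate.
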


\begin{proof} In case all $r_i$ are positive, there is nothing to prove. Otherwise we may assume w.l.o.g.~that $r_1<0$. For a natural number $n$, which will be chosen conveniently large later in the proof, we define for all $\mathbf{i}=i_1\dots i_n\in \mathcal{L}_n$
$$\widetilde{r_{\mathbf{i}}}=r_1r_{\mathbf{i}}\quad {\rm if\;} r_{\mathbf{i}}<0, \qquad
  \widetilde{r_{\mathbf{i}}}=   r_{\mathbf{i}}\quad {\rm if\;} r_{\mathbf{i}}>0.$$
  Here the $r_{\mathbf{i}}$ are the contraction ratios of the  $f_{\vect{i}}(x)=r_{\vect{i}}x+T_{\vect{i}}$ mappings from $\mathcal{F}^n$.
Since  $s:=s(\mathcal{F})$ is the solution to $\sum_1^L |r_i|^s=1$, we must have $\sum_1^L |r_i|^{s-\varepsilon}>1$ for all $\varepsilon>0$.
So
\begin{eqnarray*}\label{eq:dimtilde}
\sum_{\mathbf{i}\in \mathcal{L}_n} |\widetilde{r_{\mathbf{i}}}|^{s-\varepsilon} &=&
    \sum_{\widetilde{r_{\mathbf{i}}}<0} |\widetilde{r_{\mathbf{i}}}|^{s-\varepsilon}  + \sum_{\widetilde{r_{\mathbf{i}}}>0} |\widetilde{r_{\mathbf{i}}}|^{s-\varepsilon} =
   \sum_{r_{\mathbf{i}}<0}\fm |r_1|^{s-\varepsilon} |r_{\mathbf{i}}|^{s-\varepsilon}  + \sum_{r_{\mathbf{i}}>0} |r_{\mathbf{i}}|^{s-\varepsilon}\\
      &\ge& \fm |r_1|^{s-\varepsilon}  \sum_{\mathbf{i}\in \mathcal{L}_n} |r_{\mathbf{i}}|^{s-\varepsilon} =  \fm |r_1|^{s-\varepsilon} \bigl(\,\sum_1^L |r_i|^{s-\varepsilon}\,\bigr)^n > 1,
\end{eqnarray*}
were we have taken  $n$ such that $\bigl(\,\sum_1^L |r_i|^{s-\varepsilon}\,\bigr)^n>\fm |r_1|^{-s+\varepsilon}$.
Conclusion: if we choose $\widetilde{\mathcal{F}}$ with functions $\widetilde{f_{\mathbf{i}}}$ defined by  $\widetilde{f_{\mathbf{i}}}(x)=\widetilde{r_{\mathbf{i}}}(x)+T_{\vect{i}}$, then $C_{\widetilde{\mathcal{F}}}\subset  C_{\mathcal{F}}$ and $s(\widetilde{\mathcal{F}})>s(\mathcal{F})-\varepsilon$.
\end{proof}

\section{\fm It is enough to consider homogeneous systems}

 We call an RIFS \emph{homogeneous} if all contraction ratios are the same. Using a simple combination of \cite[Lemma 2.8]{farkas2019dimension} and \cite[Proposition 6]{Peres-Shmerkin} it appears that any RIFS can be well-approximated by a homogeneous RIFS. This is Proposition \ref{y89}.

\medskip

Given an RIFS $\mathcal{F}$ of the form \eqref{306}.
Let $U\subset \mathcal{L}_{n}$, $\# U\geq 2 $ for an $n\geq 1$. We define
\begin{equation}
\label{y90}
\mathcal{F}_U=\left\{ f_{\mathbf{i}} \right\}_{\mathbf{i}\in U}=\left\{ f_{\mathbf{i}}: f_{\mathbf{i}}(x)=   r_{\mathbf{i}}x+T_{\mathbf{i}}\right\}_{\mathbf{i}\in U}.
\end{equation}
Here the random variables $T_{\mathbf{i}}$ are defined in \eqref{eq:311}.
\fm According to Lemma \ref{lem:nthorder}, $\mathcal{F}^n$ is an RIFS. Then $\mathcal{F}_U$ is
also an RIFS since $\mathcal{F}_U$ is a subsystem of $\mathcal{F}^n$.
For all realizations, the random attractor  $C_{\mathcal{F}_U}$ of $\mathcal{F}_U$ is a subset of the random attractor
$\mathcal{C}_{\mathcal{F}}$ of $\mathcal{F}$, for the same realization.

\begin{proposition}\label{y89}\fm
  Let $ \mathcal{F}=\left\{f_i \fma:f_i(x)=r_ix+D_i\right\}_{i=1}^{L}$ be an RIFS as in Definition \ref{def:RIFS}.

  Then there exists for every $\varepsilon>0$ a number $n$,  a set $U\subset \mathcal{L}_{n}$, and an $a\in (0,1)$ such that the RIFS $\mathcal{F}_U$ has the form
  \begin{equation}
\label{y86}
\mathcal{F}_U=\left\{ f_{\mathbf{i}}: f_{\mathbf{i}}(x)=   ax+T_{\mathbf{i}}  \right\}_{\mathbf{i}\in U},
\end{equation}
 and  satisfies   $C_{\mathcal{F_U}}\subset  C_{\mathcal{F}}$,   $s(\mathcal{F_U})>s(\mathcal{F})-\varepsilon $.
\end{proposition}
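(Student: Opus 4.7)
The plan is to proceed in two stages. First, apply Proposition~\ref{prop:Positive} with $\varepsilon/2$ to replace $\mathcal{F}$ by an RIFS whose contraction ratios are all positive, at the cost of losing at most $\varepsilon/2$ in similarity dimension. Thus we may assume $r_i > 0$ for every $i$, and write $s := s(\mathcal{F})$. Second, for a suitable level $n$ and a type $(n_1,\ldots,n_L)\in \mathbb{Z}_{\ge 0}^L$ with $\sum_i n_i = n$, let
\[
  U := \bigl\{\vect{i}\in \mathcal{L}_n : \text{letter } i \text{ appears exactly } n_i \text{ times in } \vect{i}\bigr\}.
\]
Every $\vect{i}\in U$ then has the same contraction $r_{\vect{i}} = \prod_i r_i^{n_i} =: a \in (0,1)$, and $|U| = \binom{n}{n_1,\ldots,n_L}$, so $\mathcal{F}_U$ has the homogeneous form \eqref{y86} automatically.

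The heart of the argument is to choose the type so that $s(\mathcal{F}_U)$ is close to $s$. Setting $p_i := n_i/n$ and $H(p) := -\sum_i p_i \log p_i$, Stirling's formula gives
\[
  s(\mathcal{F}_U) \;=\; \frac{\log |U|}{-\log a} \;=\; \frac{H(p)}{-\sum_i p_i \log r_i} + O\!\left(\frac{\log n}{n}\right).
\]
A one-line Jensen argument, $\sum p_i \log(r_i^s/p_i) \le \log\sum_i r_i^s = 0$, yields $H(p)/(-\sum_i p_i \log r_i) \le s$, with equality precisely at the equilibrium weights $p_i^\star = r_i^s$. Picking a rational approximation $(p_i)$ of $(r_i^s)$ with common denominator $N$ close enough that the above ratio exceeds $s-\varepsilon/4$, and then taking $n = kN$ for $k$ large enough that the Stirling correction is below $\varepsilon/4$, yields $s(\mathcal{F}_U) > s - \varepsilon/2$. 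Combined with Stage~1 this gives $s(\mathcal{F}_U) > s(\mathcal{F}) - \varepsilon$.

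It remains to check that $\mathcal{F}_U$ is an RIFS in the sense of Definition~\ref{def:RIFS} and that $C_{\mathcal{F}_U}\subset C_{\mathcal{F}}$. By Lemma~\ref{lem:nthorder}, $\mathcal{F}^n$ is an RIFS, so the joint law of $(T_{\vect{i}})_{\vect{i}\in \mathcal{L}_n}$ given by \eqref{eq:311} is absolutely continuous with strictly positive, bounded, continuous density on an interval in each marginal. Restricting the joint vector to coordinates indexed by $U\subset \mathcal{L}_n$ inherits these properties, together with the i.i.d.\ iteration scheme at levels $kn$, so $\mathcal{F}_U$ is a bona fide (homogeneous) RIFS. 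The inclusion $C_{\mathcal{F}_U}\subset C_{\mathcal{F}^n}=C_{\mathcal{F}}$ is then immediate from $U\subset \mathcal{L}_n$.

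The main obstacle is the entropy-optimization step: one must identify the equilibrium weights $p_i^\star = r_i^s$ as the global maximizer of $p\mapsto H(p)/(-\sum p_i \log r_i)$ on the simplex, and then absorb two separate error sources, the rational-approximation defect and the $O(\log n/n)$ Stirling correction, into a single bound of size $\varepsilon$. Once this variational step is accepted, the RIFS structure of $\mathcal{F}_U$ and the attractor containment are bookkeeping inherited from Lemma~\ref{lem:nthorder}.
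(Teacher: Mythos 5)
Your proposal is correct and follows essentially the same route as the paper: first reduce to positive contraction ratios via Proposition~\ref{prop:Positive} (at the cost of $\varepsilon/2$ in similarity dimension), then take $U$ to be a type class in $\mathcal{L}_n$ whose empirical frequencies approximate the equilibrium weights $p_i^\star = r_i^{s}$, so that all $r_{\mathbf{i}}$ with $\mathbf{i}\in U$ coincide and $\log|U|$ is close to optimal. The paper obtains the needed multinomial lower bound by citing Lemma~\ref{y43} (Farkas, Peres--Shmerkin), whereas you rederive the same estimate via Stirling's formula together with the Jensen identification of the maximizer of $H(p)/(-\sum_i p_i\log r_i)$; the substance is the same.
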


 For the convenience of the reader, we give a detailed proof of this result in the Appendix.

\section{The Main Lemma}

A homogeneous system $\mathcal{H}$ has the form
\begin{equation}
\label{z98}
\mathcal{H}:=\left\{ H_i:H_i(x) =ax+D_i\right\}_{i=1}^{L},\quad a\in(0,1).
\end{equation}

 Here we are motivated by  Proposition \ref{prop:Positive}: with an arbitrary small loss in similarity dimension we may assume that $a\in(0,1)$ instead of $a\in(-1,1)\setminus \{0\}$.

\medskip

\fm It is convenient to introduce a slightly unusual notation. \fm The \emph{support of a function} $f:X\to \mathbb{R}$, for an arbitrary set $X$, is the set-theoretical support. That is
  \begin{equation}
  \label{z08}
  \mathrm{supp}(f):=\left\{ x\in X: f(x)\ne 0 \right\}.
  \end{equation}
  This is slightly unusual since  $\mathrm{supp}(f)$ most commonly means  the closure of the set in \eqref{z08}.

\begin{proposition}\label{y46}
Let $\mathcal{H}$ be a homogeneous RIFS  and let $C_1$ and $C_2$ be two independent copies of the attractor of the RIFS.
Then  the algebraic difference $C_2-C_1$ is  the attractor of   a homogeneous RIFS $\mathcal{H}^\circleddash$ with similarity dimension $s(\mathcal{H}^\circleddash)=2s(\mathcal{H})$.
\end{proposition}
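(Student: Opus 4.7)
The plan is to exploit the standard random self-similarity relation for the attractor of an RIFS. Since $\mathcal{H}$ is homogeneous, the attractor satisfies
\begin{equation*}
C_1 = \bigcup_{i=1}^{L}\bigl(a C_1^{(i)}+D_i^{(1)}\bigr),\qquad C_2 = \bigcup_{j=1}^{L}\bigl(a C_2^{(j)}+D_j^{(2)}\bigr),
\end{equation*}
where $(D_i^{(1)})_{i=1}^{L}$ and $(D_j^{(2)})_{j=1}^{L}$ are independent copies of the translation vector of $\mathcal{H}$, and $C_1^{(i)}$, $C_2^{(j)}$ are the (independent) subtree attractors, each equidistributed with $C_1$, $C_2$ respectively. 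Distributing the Minkowski difference gives
\begin{equation*}
C_2-C_1 \;=\; \bigcup_{(i,j)\in[L]^2}\Bigl[\,a\bigl(C_2^{(j)}-C_1^{(i)}\bigr)+\bigl(D_j^{(2)}-D_i^{(1)}\bigr)\Bigr].
\end{equation*}
The $L^2$ sets $C_2^{(j)}-C_1^{(i)}$, indexed by pairs $(i,j)\in[L]^2$, are independent copies of the difference set $C_2-C_1$, because the two ancestor trees are independent and so are the subtrees under each root. This exhibits $C_2-C_1$ as the attractor of a homogeneous system $\mathcal{H}^\circleddash$ with $L^2$ similarities, all of ratio $a$, indexed by $(i,j)$, and translation vector $D^\circleddash=(D_{ij}^\circleddash)_{(i,j)\in[L]^2}$ with $D_{ij}^\circleddash := D_j^{(2)}-D_i^{(1)}$. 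The iterated translations at deeper levels arise from taking independent copies of $D^\circleddash$, which is exactly the RIFS iteration scheme applied at the level of pairs of nodes.

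The substantive task is then to verify that $\mathcal{H}^\circleddash$ satisfies Definition~\ref{def:RIFS}, i.e.\ that each marginal $D_{ij}^\circleddash$ is absolutely continuous with a density that is bounded, continuous and strictly positive on an open bounded interval, and vanishes outside. Since $D_j^{(2)}$ and $D_i^{(1)}$ come from independent copies, they are independent; the density of $D_{ij}^\circleddash$ is therefore the convolution $\varphi_j^{(2)} \ast \check{\varphi}_i^{(1)}$, with $\check{\varphi}_i^{(1)}(x):=\varphi_i^{(1)}(-x)$. A standard fact about convolutions of compactly-supported bounded densities gives that such a convolution is bounded and continuous everywhere, is supported on the (closed) Minkowski sum of the supports, and is strictly positive on the interior of that sum: for every $x$ in the interior one finds an open neighbourhood of $y$'s on which both $\varphi_j^{(2)}(y)$ and $\check{\varphi}_i^{(1)}(x-y)$ are positive, so the convolution integral is positive. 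This yields the required density properties on the open interval $(t_j-t_i-\theta_j-\theta_i,\,t_j-t_i+\theta_j+\theta_i)$.

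Finally the similarity dimension is computed directly from \eqref{eq:S_dim}: $s(\mathcal{H})$ solves $L\cdot a^{s(\mathcal{H})}=1$, while $s(\mathcal{H}^\circleddash)$ solves $L^2\cdot a^{s(\mathcal{H}^\circleddash)}=1$, so taking logarithms gives
\begin{equation*}
s(\mathcal{H}^\circleddash)=-\frac{2\log L}{\log a}=2\,s(\mathcal{H}).
\end{equation*}

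The only step with any genuine content is the convolution argument for the density of $D_{ij}^\circleddash$; in particular the verification that strict positivity is preserved on the interior of the support requires a small but standard measure-theoretic argument. Everything else is bookkeeping: distributing the subtraction, identifying the pair-indexed decomposition as an RIFS at the level of $L^2$-many similarities, and a one-line dimension computation.
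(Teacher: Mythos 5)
Your argument follows the paper's own route quite closely: subtract the translations to get $D_{i,j}^\circleddash := D_j^{(2)} - D_i^{(1)}$, check that each has a bounded continuous strictly positive density on an open interval (the convolution argument you give is correct and more detailed than the paper's), and then identify $C_2-C_1$ as the attractor of the homogeneous system $\mathcal{H}^\circleddash:=\{ax+D_{i,j}^\circleddash\}_{(i,j)\in[L]^2}$, with the dimension computation $s(\mathcal{H}^\circleddash)=2s(\mathcal{H})$ being the same one-liner. However, in being explicit where the paper simply says ``it is straightforward to check,'' you have introduced a claim that is false. You write that the $L^2$ sets $C_2^{(j)}-C_1^{(i)}$, indexed by $(i,j)\in[L]^2$, are \emph{independent} copies of $C_2-C_1$ ``because the two ancestor trees are independent and so are the subtrees under each root.'' That inference does not go through: for instance, $C_2^{(1)}-C_1^{(1)}$ and $C_2^{(1)}-C_1^{(2)}$ both contain the same random factor $C_2^{(1)}$, so they cannot be independent (their locations, for example, are positively correlated through $C_2^{(1)}$). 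The same problem recurs one level down: the translation vector assigned to the node $(i_1,j_1)$ of the $L^2$-ary tree is $\bigl(D_{j'}^{(j_1),\,\mathrm{tree}\,2}-D_{i'}^{(i_1),\,\mathrm{tree}\,1}\bigr)_{(i',j')}$, and two sibling pair-nodes $(i_1,j_1)$ and $(i_1,j_1')$ with $j_1\ne j_1'$ share the entire tree-$1$ component $D^{(i_1),\,\mathrm{tree}\,1}$. Hence the family of translation vectors indexed by the pair-tree is \emph{not} i.i.d., which is exactly what Definition~\ref{def:RIFS}(b) demands. Concretely, the pair-tree dependence structure carried by $C_2-C_1$ has strictly more correlation than that of the i.i.d.\ RIFS $\mathcal{H}^\circleddash$: for example, the level-two cylinder translations of $C_2-C_1$ at indices $(11,11)$ and $(21,11)$ share a level-two summand $-aD_1^{(1),\,\mathrm{tree}\,1}$, whereas for $\Lambda^\circleddash$ at the corresponding pair-tree indices $((1,1),(1,1))$ and $((2,1),(1,1))$ the level-two summands sit at distinct pair-tree nodes and are independent. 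So the joint law of the cylinder translations, and hence of the level-$n$ cylinder cover, genuinely differs. To close the gap one cannot simply invoke independence; one must either show that this difference in joint law does not affect the distribution of the attractor (not at all obvious), or reformulate the claim so that it suffices to work with the correlated pair-tree process (which may require revisiting the multi-type branching argument, since individuals of the same generation no longer reproduce independently). The convolution step and the dimension computation in your write-up are fine; the independence claim is the part that fails, and it is the single substantive step.
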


\begin{proof}
  Let the homogeneous RIFS $\mathcal{H}$ be given by
  \begin{equation}
    \label{y43}
    \mathcal{H}:=\left\{ H_i(x):H_i(x) =ax+D_i\right\}_{i=1}^{L },\quad
    a\in(0,1),
    \end{equation}
where $\overrightarrow{D}=(D_1,\dots,D_L)$ is an $L$ dimensional random \fm vector  such that for every $i\in[L]$
the random variable $D_i$ is  absolutely continuous w.r.t.~the Lebesgue measure, with a density $\varphi _i$ which is bounded,
continuous with $\mathrm{supp}(\varphi_i)=(t_i-\theta_i,t_i+\theta_i)$.

    For every $i\in[L]$ let $\widehat{D}_i\stackrel{d}{=}D_i$ and $\widetilde{D}_i\stackrel{d}{=}D_i$. Moreover, we require that
    $\widehat{D}_i$ and $\widetilde{D}_i$ are independent. We define
\begin{equation}
\label{y45}
D_{i,j}:=\widetilde{D}_i-\widehat{D}_j,\qquad
(i,j)\in[L]\times [L].
\end{equation}
Then $D_{i,j}$ is absolutely continuous w.r.t.~the Lebesgue mesure, with a density function $\varphi _{i,j}$ which is continuous, bounded with

\qquad   $\mathrm{supp}(\varphi _{i,j})=(t_i-t_j-\theta _i-\theta _j,t_i-t_j+\theta _i+\theta _j)$.\\
   That is, $\varphi _{i,j}$ satisfies all the requirements we set for the density function in Part (a) of Definition \ref{def:RIFS}.
   Using that, \fm we can define the homogeneous RIFS which consists \fm of a number of $L^2$ functions
$\mathcal{H}^\circleddash:=  \left\{ ax+D_{i,j} \right\}_{i,j=1}^{L}$. It is straightforward to check that the attractor
    $\Lambda^\circleddash$ of $\mathcal{H}^\circleddash$ is the algebraic difference of two independent copies of the attractor of $\mathcal{H}$.
\end{proof}

In the rest of the paper, we  always assume that the following assumptions hold:

\begin{enumerate}
[{\bf {A}1}]
\item $\mathcal{H}$ is a homogeneous RIFS $\left\{ H_i:H_i(x) =ax+D_i\right\}_{i=1}^{L}$ with supporting interval $I=[\alpha ,\beta ]$, and $ s(\mathcal{H})>1$.
\item $\mathcal{H}$ is a random perturbation of the deterministic IFS $\mathcal{S}:=\left\{ ax+t_i \right\}_{i=1}^{L}$.
\end{enumerate}

Here {\bf A2} means the following:  for all $i\in[L]$ there exist an interval $[-\theta_i,\theta_i]$ and an absolutely continuous  random variable $Y_i$
    whose probability  density function $\widetilde{f}_i$ is continuous, bounded and has support $\mathrm{supp}(\widetilde{f}_i)= (-\theta_i,\theta_i),$ such that
\begin{equation}\label{z97}
D_i\stackrel{d}{=}t_i+Y_i.
\end{equation}

\medskip

 The self-similarity property of an RIFS is expressed by the position of a point $x\in J_i:=H_i(I)$ relative to the endpoints of $J_i$, for $i\in [L]$.
This corresponds to the position of a point that we call $\Phi_i(x)$ relative to the endpoints of the supporting interval $I=[\alpha,\beta]$. This leads to the following definition.

\medskip

\begin{definition}\label{z86}
\begin{enumerate}
[{\bf (a)}]
    \item \fm For an $i\in [L] $, we write $J_i=H_i(I)=[A_i,B_i]$,    and we define the random variable
\begin{equation}\label{z85}
\Phi_i(x ):=\frac{x-A_i}{a}+\alpha \quad\hbox{if } x\in J_i, \qquad \Phi_i(x ):=\Theta \quad\hbox{if } x\not\in J_i,
\end{equation}
 where the symbol $\Theta$ represents that  $x\not\in J_i$.
    \item For an $x\in \mathrm{int}(I)$ let $\phi_i(x,\cdot)$ be the density function of $\Phi _i(x)$. Then an easy calculation yields that
\begin{equation}
\label{z82}
\phi_i (x,y)=a\widetilde{f}_i\left( x-t_i-ay \right),
\end{equation}
where $\widetilde{f}_i$ is \fm\fm the density of the random variables \fm $Y_i$ defined by (\ref{z97}).\\
The following function plays a crucial role in our argument
\begin{equation}
\label{z81}
m_I(x,y):=\sum _{i=1}^{L } \phi _i(x,y),\quad
(x,y)\in I^2.
\end{equation}
\end{enumerate}
\end{definition}

\medskip

\begin{mainlemma}\label{z80}
    There exists a  $T(0)\subset  I$ (the {\rm  pre-type space}) which is composed of a finite number of disjoint open intervals and there exists a real number \fm$\eps_{{\rm\scriptscriptstyle{MAIN}}}>0$ such that for every \fm$\eps \in (0,\eps_{{\rm\scriptscriptstyle{MAIN}}})$, the so-called {\rm type space} \fm
    \begin{equation}\label{eq:typespace} T(\eps ):=T(0)\setminus B(\partial T(0),\eps ),\end{equation}
     where $B(E,\eps):=\bigcup\{(x-\eps,x+\eps):x\in E\}$ for an $E\subset \mathbb{R}$, satisfies
    \begin{enumerate}
    \item The compact set $T(\eps )$ consists of as many intervals as $T(0)$.
    \item Let $m^{\varepsilon}:= m^{\varepsilon}_1:=m_I\cdot\fm  \indicator_{T(\varepsilon)\times T(\varepsilon)}$ and for $n\geq 1$ let
    \begin{equation}
    \label{y80}
    m^{\varepsilon}_{n+1}(x,y):= \int\limits_{T(\varepsilon)} m^{\varepsilon}_{n}(x,z)\cdot m^{\varepsilon}_{1}(z,y)\, \mathrm dz.
    \end{equation}
        Then there is an index $N_0$ for which the function $m_{N_0}^{\eps }$ is uniformly positive and bounded on  $T(\eps )\times T(\eps)$.
    \item The Perron-Frobenius eigenvalue of the operator \fm
$$F^{\eps }:\ h(x)\mapsto \int\limits_{T} m_1^{\eps}(x ,y)\cdot h(y)\, \di y, \quad x\in T(\eps )$$
acting on  $L^2(T(\varepsilon ))$  is larger than $1$.
\item The corresponding eigenfunction $f_{\varepsilon }(x)$ is continuous on $T(\eps )$.
\item {$H_{\mathbf{i}}(T(0))\subset T(0)$ for every $\mathbf{i}\in\mathcal{T}$.}
    \end{enumerate}
    \end{mainlemma}
    The proof is given in Section \ref{y81}.

    \medskip

    The   contents of the following theorem form the essential part of the proof of our main result Theorem \ref{thm:main}.

    \begin{theorem}\label{y79}
      Let $\mathcal{H}$ be a  homogeneous RIFS  with $s(\mathcal{H})>1$.  Then $\text{int}(C_{\mathcal{H}})\ne \emptyset $ almost surely.
    \end{theorem}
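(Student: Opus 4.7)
The proof proceeds by associating to $\mathcal{H}$ a multi-type Galton-Watson process whose type space is the compact set $T(\varepsilon)\subset I$ supplied by the Main Lemma~\ref{z80}, by exploiting uniform supercriticality of this process to cover an entire interval with positive probability, and then upgrading to an almost sure statement via self-similarity and independence.

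Fix $x_0\in T(\varepsilon)$ and set $\mathcal{Z}_n(x_0):=\{\mathbf{i}\in\mathcal{L}_n:x_0\in H_\mathbf{i}(T(\varepsilon))\}$; assign to each $\mathbf{i}\in\mathcal{Z}_n(x_0)$ the \emph{type} $H_\mathbf{i}^{-1}(x_0)\in T(\varepsilon)$. Using the independence of the $D^{(\mathbf{i})}$ attached to disjoint subtrees (Lemma~\ref{lem:nthorder}), $(\mathcal{Z}_n(x_0))_{n\ge 0}$ is a multi-type branching process on $T(\varepsilon)$ in which a parent of type $y$ produces offspring of types $\{\Phi_j(y):j\in[L],\,\Phi_j(y)\in T(\varepsilon)\}$. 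By Definition~\ref{z86} the mean offspring density is precisely $m^\varepsilon_1(y,\cdot)$. Parts~(2)-(4) of the Main Lemma are exactly the hypotheses needed for a uniform Kesten-Stigum theorem with a general type space: the Perron-Frobenius eigenvalue $\lambda$ of $F^\varepsilon$ exceeds $1$, the eigenfunction $f_\varepsilon$ is continuous and strictly positive on the compact set $T(\varepsilon)$, and the iterate $m^\varepsilon_{N_0}$ is uniformly bounded above and below on $T(\varepsilon)\times T(\varepsilon)$. This yields a single constant $\sigma_0>0$ such that
\begin{equation*}
\pr\bigl[\mathcal{Z}_n(x_0)\ne\emptyset\text{ for every }n\bigr]\ge\sigma_0\quad\text{for every }x_0\in T(\varepsilon),
\end{equation*}
and since item~(5) makes the covers $\bigl(\bigcup_{\mathbf{i}\in\mathcal{L}_n}H_\mathbf{i}(T(0))\bigr)_n$ nested in $n$, survival of the process forces $x_0$ to lie in $C_\mathcal{H}$.

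To pass from pointwise coverage to coverage of a whole interval $J\subset T(\varepsilon)$, I follow the route of Rams-Simon~\cite{Rams-Simon}: consider the uncovered set $U_n:=J\setminus\bigcup_{\mathbf{i}\in\mathcal{L}_n}H_\mathbf{i}(T(\varepsilon))$, derive an exponential-decay first moment estimate $\ev[\Lebfrac(U_n)]\to 0$ from the uniform survival probability $\sigma_0$, and use the uniform boundedness of $m^\varepsilon_{N_0}$ on $T(\varepsilon)\times T(\varepsilon)$ to bound correlations between the branching processes launched from different points of $J$, giving a matching second moment estimate of the form $\ev[\Lebfrac(U_n)^2]\le C\,\ev[\Lebfrac(U_n)]$. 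A Paley-Zygmund argument then yields positive probability that $U_n=\emptyset$ for all large $n$; combining this with the nestedness from item~(5) gives $J\subset\bigcap_n\bigcup_{\mathbf{i}}H_\mathbf{i}(T(0))\subset C_\mathcal{H}$ with some probability $p>0$, so $\mathrm{int}(C_\mathcal{H})\ne\emptyset$ with probability at least $p$.

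Almost sure non-emptiness follows from the self-similar decomposition $C_\mathcal{H}\supseteq\bigcup_{\mathbf{i}\in\mathcal{L}_n}H_\mathbf{i}(C_{\mathcal{H}^{(\mathbf{i})}})$: the sub-attractors $C_{\mathcal{H}^{(\mathbf{i})}}$ are i.i.d.\ copies of $C_\mathcal{H}$ measurable with respect to the disjoint subtrees rooted at level $n$, and $H_\mathbf{i}$ is a homeomorphism, so
\begin{equation*}
\pr\bigl[\mathrm{int}(C_\mathcal{H})=\emptyset\bigr]\le(1-p)^{L^n}\xrightarrow[n\to\infty]{}0.
\end{equation*}
The main obstacle is the Rams-Simon step: since the branching processes starting at different points of $J$ share large portions of the underlying random tree of translations, extracting a workable second moment bound on $\Lebfrac(U_n)$ requires carefully combining the uniform positivity and boundedness of $m^\varepsilon_{N_0}$ with the independence of disjoint $n$-th-order subtrees from Lemma~\ref{lem:nthorder}.
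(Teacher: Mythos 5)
Your first paragraph (branching process set-up, uniform supercriticality via the Main Lemma) and your final paragraph (upgrading positive probability to almost sure coverage via the self-similar decomposition into i.i.d.\ copies on disjoint subtrees) are both consistent with the paper's strategy. The problem is in the middle step — passing from pointwise survival to coverage of an interval — and it is a genuine gap, not merely an unpolished detail.

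First, the moment estimate you propose is false: the uniform survival probability $\sigma_0$ supplied by supercriticality is strictly less than~$1$ (each particle may produce only $\Theta$-offspring with positive probability), so $\pr\bigl(\Z_n(x,T(\eps))=0\bigr)\to 1-\sigma_0>0$ as $n\to\infty$, and hence $\ev[\Lebfrac(U_n)]=\int_J\pr\bigl(\Z_n(x,T(\eps))=0\bigr)\,\di x$ converges to $(1-\sigma_0)\Lebfrac(J)>0$ rather than to $0$. One cannot target a fixed deterministic interval $J$; the interval that ends up inside $C_{\mathcal{H}}$ must be chosen $\omega$-dependently. Second, even granting a first-moment decay, Paley--Zygmund gives a \emph{lower} bound on $\pr\bigl(\Lebfrac(U_n)>\theta\,\ev[\Lebfrac(U_n)]\bigr)$, which is the wrong direction: it can never produce a positive lower bound on $\pr(U_n=\emptyset)$. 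Since $\Lebfrac(U_n)$ is a continuous random variable, ``$\Lebfrac(U_n)$ small'' is compatible with ``$U_n\ne\emptyset$ for all $n$'', and indeed $\bigcap_n U_n$ could be a nonempty Lebesgue-null set even when $\Lebfrac(U_n)\to 0$ almost surely. So the Paley--Zygmund step does not deliver the conclusion.

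What the Rams--Simon method actually provides here, and what the paper does, is quite different from a moment estimate on the uncovered measure. The paper first uses a pigeonhole argument (Lemma~\ref{lem:34}): for \emph{every} realization $\omega$ one produces, by averaging $\Z_n(\cdot,W)$ over $T(0)$, a random interval $\J(\omega)\subset T(0)$ of deterministic length $\tfrac12\ell_1 a^n$ on which $\Z_n(x,W)\ge N(n)$ for \emph{all} $x\in\J(\omega)$. Conditioning on the location of $\J(\omega)$ (the events $\Omega_l$), one then runs independent Hoeffding/Cram\'er large-deviation estimates (Lemma~\ref{lem:Cramer}) on the subprocesses rooted at the $\ge N(n)$ surviving nodes at level $n$ to get doubling $\Z_{n+kr}(x,W)>2^kN(n)$ with probability $\ge 1-a_k(n)$ for each $x$ in a finite $\eta a^{2n+kr}$-dense set $X_k\subset J_l$. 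Finally — and this is the key device you are missing — the slack $\eta$ between $W=T(\eps+\eta)$ and $T=T(\eps)$ (Fact~\ref{fact:meas}) is used to interpolate from the discrete set $X_k$ to all of $J_l$: if $\Z_{n+kr}(x',W)$ is large for some $x'\in X_k$, then $\Z_{n+kr}(x,T)$ is large for every $x$ within $\eta a^{n+kr}$ of $x'$. The product $\prod_k(1-a_k(n))$ is made larger than $1-\xi$ by choosing $n$ large, which yields $\pr\bigl(J_l\subset C_{\mathcal{H}}\mid\Omega_l\bigr)>1-\xi$ and hence the theorem directly, without needing the self-similarity upgrade in your last paragraph. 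Without the pigeonhole step and the $W$-versus-$T$ slack, you have no mechanism to control survival simultaneously on an uncountable set of starting points.
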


 \noindent  Theorem \ref{y79} will be proved in Section \ref{sec:Th12}, as a consequence of the Main Lemma \ref{z80}.

\section{Proof of Theorem \ref{y48}  and Theorem \ref{thm:main} assuming Theorem \ref{y79} }\label{y35}

 We first prove the Main Theorem:

\begin{proof}[Proof of Theorem \ref{thm:main}]
  Given an RIFS $ \mathcal{F}$  with similarity dimension $s(\mathcal{F})>1$. \fm
   Then according to Proposition \ref{y89}, we can find a homogeneous RIFS   $\mathcal{H}:=\mathcal{F}_U$, with $s(\mathcal{H})>1$.
 It follows from Theorem \ref{y79} that the attractor \fm $C_{\mathcal{H}}$ of the homogeneous RIFS $\mathcal{H}$ contains an interval almost surely.
Then this interval is also contained in the attractor $C_{\mathcal{F}}$ of the RIFS $\mathcal{F}$.
\end{proof}
\begin{proof}[Proof of Theorem \ref{y48}]
Given an RIFS $ \mathcal{F}$ with similarity dimension larger than $\frac{1}{2}$.
Let $\varepsilon >0$ so that $\varepsilon<s(\mathcal{F})-\frac12$.
Using Proposition \ref{y89} there exists a homogeneous   \fmu RIFS $\mathcal{H}:=\mathcal{F}_U$,
with $C_\mathcal{H}\subseteq C_\mathcal{F}$ and  $s(\mathcal{H})>s(\mathcal{F})-\varepsilon>\frac12$.
Let $C_1$, $C_2$ be two independent copies of the attractor of $\mathcal{F}$. Then we also have two independent copies $C^1_\mathcal{H}\subseteq C_1$ and
 $C^2_\mathcal{H}\subseteq C_2$ where  $s(\mathcal{H})>\frac12$.
According to \fm Proposition \ref{y46} we can find another homogeneous RIFS $\mathcal{H}^\circleddash$ whose attractor is $C^1_\mathcal{H}-C^2_\mathcal{H}$, and $s(\mathcal{H}^\circleddash)=2s(\mathcal{H})>1$.
So by  Theorem \ref{thm:main}  $C^2_\mathcal{H}-C^1_\mathcal{H}$ contains an interval almost surely. But then also $C_2-C_1\supseteqq C^1_\mathcal{H}-C^2_\mathcal{H}$ contains an interval almost surely.
\end{proof}

\section{The multi type branching process $\mathcal{Z}$}

\fm
On the probability space $\Omega$ we define a multi type branching process
$\mathcal{Z}=(\mathcal{Z}_n)_{n=0}^\infty$.   For a fixed a $0<\varepsilon <\varepsilon _{{\rm\scriptscriptstyle{MAIN}}}$  the type space from Equation \eqref{eq:typespace} is denoted by
$T:=T(\varepsilon)$. Actually, the \fm set of types is $T\cup \left\{ \Theta \right\}$.
By definition $\Theta$ is an absorbing state. That is, all descendants of $\Theta$ \fm are  equal to $\Theta$.

Let $\mathcal{Z}_0=\{x\}$, where $x\in T$, and for an $i\in[L]$ let $Z_{i}:=\Phi_i(x)$. Then
\begin{equation*}
\mathcal{Z}_1:=\left\{Z_{i}:i=1,\dots,L\right\}.
\end{equation*}
Note that although we speak of $\Theta$ as a type, it is \emph{not} an element of $T\subset [\alpha ,\beta ]$.\fm

\medskip

 To define the $\mathcal{Z}_n$, we need some preparations.
We follow the definition of an RIFS given in Definition \ref{def:RIFS}, but now from the viewpoint of the $Y$-vectors, instead of the $D$-vectors.
So we are given
 $$\left\{Y^{(\vect{i})}= \left(Y^{(\vect{i})}_1,\dots, Y^{(\vect{i})}_L \right) \right\}_{\vect{i}\in\mathcal{T}}$$
as a set of i.i.d.~random  vectors having the same distribution as that of $(Y_1,\dots,Y_L)$.


For an \fm $\mathbf{i}=i_1\dots  i_n$ we define
\begin{equation}
t_{\mathbf{i}}:=\sum\limits _{k=1}^{ n} a^{k-1}\cdot t_{i_k},\
Y_{\mathbf{i}}:= Y _{i_1}^{\emptyset }+\sum \limits _{k=2}^n a^{k-1}\cdot Y _{i_k}^{(i_1i_2\dots i_{k-1})}
 \text{ and }\theta_{\mathbf{i}}:=\sum _{k=1}^{n}a^{k-1}\cdot \theta_{i_k}.
\end{equation}
Clearly, the iterates from (\ref{307}) take the following form for a homogeneous RIFS
\begin{equation}
\label{y82}
H_{\mathbf{i}}(x)=a^n x+T_{\mathbf{i}},\ \text {where } T_{\mathbf{i}}:=t_{\mathbf{i}}+Y_{\mathbf{i}}.
\end{equation}
It follows from our assumption on the density  $\widetilde{f}_i$ of $Y_i$ \fm
that  (identifying somewhat carelessly the support of an absolutely continuous random variable with the support of its density function)
\begin{equation}
\label{z88}
\mathrm{supp}(Y_{\mathbf{i}})= (-\theta_{\mathbf{i}},\theta_{\mathbf{i}}).
\end{equation}

 \fm Let $I=[\alpha,\beta]$ be the supporting interval of $\mathcal{H}$.  We define the level-$n$ (random) cylinder intervals
$$
J_{\mathbf{i}}:=H_{\mathbf{i}}(I)= [a^n \alpha +T_{\mathbf{i}},a^n \beta + T_{\mathbf{i}}]\subset
[a^n \alpha +t_{\mathbf{i}}-\theta_{\mathbf{i}},a^n \beta  +t_{\mathbf{i}}+\theta _{\mathbf{i}}].
$$
The collection of all of these random level $n$ intervals is denoted by $\mathfrak{I}_n$. Note that these are intervals of length $(\beta -\alpha )a^n$.
The endpoints of the random interval  $J_{\mathbf{i}}\in \mathfrak{I}_n$ are $A_{\mathbf{i}}\fm=a^n \alpha +T_{\mathbf{i}}$ and $B_{\mathbf{i}}=A_{\mathbf{i}}+(\beta -\alpha )a^n$. That is, by definition
$
J_{\mathbf{i}}=[A_{\mathbf{i}},B_{\mathbf{i}}].
$


   We get the level $n$ children of an $x\in T$ with $H_{\mathbf{i}}(x)\in  J_{\mathbf{i}}$ as follows:

If $H_{\mathbf{ i}}^{-1 }(x)=\alpha + \frac{x-A_{\mathbf{i}}}{a^n}\not \in T$ then
 the level $n$ child of $x$  is $\Theta$. Otherwise, the level $n$ child of $x$ is $H_{\mathbf{ i}}^{-1 }(x)=\alpha+ \frac{x-A_{\mathbf{i}}}{a^n}$.

 \medskip

 \fm Note that $H_{i}^{-1 }(x)=\Phi_i(x)$, where $\Phi_i$ was defined in Definition \ref{z86}. In the sequel we will use the notation with the \fm$H_i^{-1}$ and their iterates.



 In general, if $x\in T$ and $\Z_0=\{x\}$ and  $A\subset T$ is a  Borel set, then for any $n\geq 1$ the set of level $n$ descendants of $x$ contained in  $A$ is denoted by $\mathfrak{D}_n(x,A)$. So,

\begin{equation}\label{eq:brpr}
\mathfrak{D}_n(x, A):=\left\{
\mathbf{i}\in\mathcal{L}_{n}: \ H_{\mathbf{ i}}^{-1 }(x)\in  A\right\}
\end{equation}
and
\begin{equation}
\label{y72}
\Z_n(x, A)= \# \mathfrak{D}_n(x, A).
\end{equation}
We remark that the process $(\mathcal{Z}_n)_{n=0}^\infty$ is a Markov chain since an
individual in $\mathcal{Z}_n$ gives birth to descendants independently of the individuals of the
same generation if $\mathcal{Z}_{n-1}$ is given.


A major role in our analysis is played by the expectations $\ev[\mathcal{Z}_n(x,A)]$, for
$A\subset T$, $n\ge 1$. For $n=1$ we have
\begin{eqnarray}\label{eq:dph}
\ev[\mathcal{Z}_1(x,A)]&=&\int_\Omega\mathcal{Z}_1(x,A)\di\pr
=\notag \int_\Omega\sum_{i=1}^L \ind_{\{\Phi_i(x)\in A\}} \di\pr\\
&=&\sum_{i=1}^L\pr
(\Phi_i (x)\in A)=\sum_{i=1}^L\int_A\! \phi_{i}(x,y) \di y,
\end{eqnarray}
where $\phi_{i}(x,y)$ was defined in part (b) of Definition \ref{z86}.\fm
 It follows that $M_1(x,\cdot):=\ev[\mathcal{Z}_1(x,\cdot)]$ has a  kernel, given by
\begin{equation}\label{m-kernel}
m(x,y):=m_1(x,y)=\fm \sum_{i=1}^L \phi_{i}(x,y), \quad
(x,y)\in T\times T.
\end{equation}
Let for $n\ge 1$ and $A\subset T$ and $x\in T$,
\begin{equation*}\label{115}
M_n(x,A):=\ev[\mathcal{Z}_n(x,A)]
\end{equation*}
We remark that if $M_1$ has a kernel then $M_n$ also has a kernel. Let us write $m_n(x,\cdot)$
for the kernel of $M_n(x,\cdot)$.  That is
\begin{equation}
\label{y73}
M_n(x,A)=\int\limits_{y\in A} m_n(x,y) \di y.
\end{equation}

The branching structure of $\mathcal{Z}$ yields (see \cite[p.67]{Harris63})
\begin{equation*}\label{eq:rec-mn}
m_{n+1}(x,y)=\int_{T}m_n(x,z)m_1(z,y)\di z,
\end{equation*}
which was already introduced in \fm \eqref{y80}, \fm where one has to realize that in the notation we suppressed the dependence on $\varepsilon$ of the kernel function $m(\cdot,\cdot)$ in Section 6 and 7.


\subsection{Supercritical branching processes with uniformly positive kernel}

Part (2) of the Main Lemma asserts  that there exists an integer
$N_0$ such that $m_{N_0}$ is a uniformly positive and uniformly bounded function. That is,
there exist $a_{\min}$ and $a_{\max} $ such that for all $x ,y\in \T$ we have
\begin{equation}\label{cond:dens}
0<a_{\min}\leq m_{N_0}(x ,y)\leq a_{\max}<\infty \tag{\textbf{C1}}.
\end{equation}
We next consider the following two operators:
\begin{eqnarray}\label{10}
&F:\ \varphi(x)\mapsto& \int\limits_{\T}m_1(x ,y)\cdot \varphi(y)\,\di y, \quad x\in T
\\
&G:\ \psi(y)\mapsto& \int\limits_{\T}\psi(x)\cdot m_1(x ,y)\, \di x, \quad y\in T.\notag
\end{eqnarray}

These operators are closely related to the expectations of the branching process. Note in particular that
\begin{equation}\label{eq:FandE}
\ev[\Z_n(x,A)]=\int_{T}m_n(x,y)\ind_A(y)\di y=F^n(\ind_A(x))\fm.
\end{equation}
We cite the following theorem from \cite[Theorem 10.1]{Harris63}:

\begin{theorem}[Harris]\label{Harris:10.1}
It follows from {\rm (\ref{cond:dens})} that the operators in {\rm (\ref{10})} have a
common dominant eigenvalue $\rho $. Let $f$ and $g$ be the
corresponding eigenfunctions of the first and second operator in
{\rm (\ref{10})} respectively. Then the functions $f$ and $g$
are bounded and uniformly positive on $T$. Moreover, apart from a scaling, $f$ and $g$ are
the only non-negative eigenfunctions of these operators. Further, if we normalize
$f$ and $g$ so that $\int f (x)g (x)\di x=1$, which will be
henceforth assumed, then for all  $x,y \in T$ as $n\to\infty$
$$
\Big|\frac{m_n(x,y)}{\rho^n}-f (x)g(y)\Big|\le C_1\,f (x)g (y)\Delta^n,
$$
\noindent where the bound $\Delta<1$ can be taken independently of $x$ and $y$,
and the constant $C_1$ is independent of $x,y$ and $n$.
\end{theorem}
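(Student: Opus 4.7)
My approach would be to treat this as a Jentzsch/Krein--Rutman theorem for positive compact integral operators, reducing everything to the iterate $F^{N_0}$ whose kernel $m_{N_0}$ is uniformly positive and bounded by hypothesis (\ref{cond:dens}). The first step is to observe that $T$ has finite Lebesgue measure and $m_{N_0}$ is bounded on $T\times T$, so $F^{N_0}$ is Hilbert--Schmidt, hence compact on $L^2(T)$. Combined with strict positivity of the kernel, Jentzsch's theorem then yields a simple dominant eigenvalue $\mu>0$ of $F^{N_0}$ with a strictly positive eigenfunction $f$ that is, up to scaling, the unique non-negative eigenfunction. Setting $\rho := \mu^{1/N_0}$, one checks that $f$ is itself an eigenfunction of $F$ with eigenvalue $\rho$: indeed $Ff$ is a non-negative eigenfunction of $F^{N_0}$ with eigenvalue $\mu$, hence a positive multiple of $f$, and the multiplier must be a positive $N_0$-th root of $\mu$. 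Applying the same reasoning to the formal adjoint $G$ (whose kernel is $m_1(y,x)$) produces $g$ with $G g = \rho g$ and the same uniqueness.

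Next I would use the identity $f = \mu^{-1} F^{N_0} f$ together with the two-sided bound $a_{\min}\le m_{N_0}\le a_{\max}$ to obtain
\[
\frac{a_{\min}}{\mu}\int_T f(y)\,\di y \;\le\; f(x)\;\le\;\frac{a_{\max}}{\mu}\int_T f(y)\,\di y \quad \text{for all } x\in T,
\]
so $f$ is uniformly positive and bounded on $T$; continuity of $f$ follows by inheritance from the continuous kernel $m_1$. The same bounds and continuity hold for $g$. Normalize by $\int f g = 1$.

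The quantitative convergence estimate $|m_n(x,y)/\rho^n - f(x)g(y)|\le C_1 f(x) g(y)\Delta^n$ is where the real work lies, and this is the main obstacle. For it I would invoke Birkhoff's theorem on Hilbert's projective metric: on the cone of positive continuous functions, the ratio $a_{\max}/a_{\min}$ controls the projective diameter of the image of $F^{N_0}$, forcing $F^{N_0}$ to be a strict contraction with some ratio $\kappa<1$. Iterating gives geometric convergence of $F^{kN_0} h/\rho^{kN_0}$ to $(\int h g)\, f$ in the projective metric, which translates, after fixing a normalization, into a multiplicative estimate of the form $|F^{kN_0} h(x)/\rho^{kN_0} - (\int h g) f(x)| \le C\, f(x)\,\kappa^k$. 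To turn this into the pointwise estimate with the symmetric envelope $f(x) g(y)$, I would write, for $n\ge 2N_0$,
\[
\frac{m_n(x,y)}{\rho^n} = \frac{1}{\rho^{2N_0}}\int_T\!\int_T m_{N_0}(x,u)\,\frac{m_{n-2N_0}(u,v)}{\rho^{n-2N_0}}\,m_{N_0}(v,y)\,\di u\,\di v,
\]
apply the projective-metric contraction to the middle factor, and use $m_{N_0}(x,u)\le a_{\max}$ and the lower bound $m_{N_0}(x,u)\ge a_{\min}$ together with $f,g$ uniformly comparable to constants to extract the envelope $f(x) g(y)$ on both sides, with $\Delta:=\kappa^{1/N_0}$. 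The delicate step is pairing the contraction in the $u$-variable with the corresponding contraction of the adjoint in the $v$-variable so that the envelope comes out symmetrically as $f(x) g(y)$ uniformly in $(x,y)\in T\times T$; this is precisely where the symmetry between $F$ and $G$ and the two-sided uniform positivity (\ref{cond:dens}) are essential.
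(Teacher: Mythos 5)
The paper does not prove this statement at all: it is quoted verbatim as \cite[Theorem 10.1]{Harris63}, and the authors explicitly say ``We cite the following theorem from Harris.'' There is therefore no ``paper's own proof'' to compare against. Your job here, then, is really to check whether the cited result actually follows from hypothesis \eqref{cond:dens}, and your sketch is a legitimate modern route to it. The reduction to $F^{N_0}$ and the appeal to Jentzsch/Krein--Rutman for existence, positivity, simplicity, and uniqueness of the dominant eigenpair are all correct and are exactly what \eqref{cond:dens} is designed to make available; the step $Ff = cf$ with $c^{N_0}=\mu$ (using that $Ff$ is a nonzero nonnegative eigenvector of $F^{N_0}$) is a standard and sound way to pass from $F^{N_0}$ back to $F$, and the two-sided bound $a_{\min}/\mu \int f \le f \le a_{\max}/\mu \int f$ gives uniform positivity and boundedness.

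For the geometric rate with envelope $f(x)g(y)$, the Birkhoff projective-metric argument you outline does work and is, if anything, cleaner than what Harris does. The key point you identify correctly is that \eqref{cond:dens} forces the image of the positive cone under $F^{N_0}$ (and under $G^{N_0}$) to have projective diameter at most $2\log(a_{\max}/a_{\min})$, so the contraction ratio $\kappa=\tanh(\tfrac12\log(a_{\max}/a_{\min}))<1$ is uniform. To nail down the symmetric envelope, one can argue slightly more economically than your double-sandwich: for any $x$ the function $y\mapsto m_{N_0}(x,y)$ lies in a fixed projective ball around $g$, hence $d\bigl(G^{kN_0}m_{N_0}(x,\cdot),\,g\bigr)\le \kappa^k\,2\log(a_{\max}/a_{\min})$; unwinding the projective metric gives a scalar $c_k(x)>0$ with $m_{(k+1)N_0}(x,y)=c_k(x)\,g(y)\,(1+O(\kappa^k))$, and pairing with $f$ via $\int m_{(k+1)N_0}(x,y)f(y)\,\di y=\rho^{(k+1)N_0}f(x)$ and the normalization $\int fg=1$ identifies $c_k(x)=\rho^{(k+1)N_0}f(x)(1+O(\kappa^k))$, giving the estimate for $n$ a multiple of $N_0$. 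For general $n$ one composes once more with $m_r$ ($0\le r<N_0$) and uses $\int g(z)m_r(z,y)\,\di z=\rho^r g(y)$ to propagate the bound, yielding $\Delta:=\kappa^{1/N_0}$. So there is no gap in your approach; it simply re-derives a classical theorem by the Birkhoff route rather than by Harris's original, more hands-on kernel estimates. Harris's own argument (a direct generalization of the Perron--Frobenius proof for positive matrices) has the advantage of being self-contained within his framework; your route has the advantage of making the role of the ratio $a_{\max}/a_{\min}$ and the geometric rate completely transparent.
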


It follows from part (3) of the Main Lemma that the branching process $\mathcal{Z}$ is supercritical. That is the
 the Perron-Frobenius eigenvalue $\rho$ is greater than one:
\begin{equation}\label{cond:rho}
\rho >1\tag{\textbf{C2}}.
\end{equation}

\section{The proof of Theorem \ref{y79} assuming Main Lemma \ref{z80}}\label{sec:Th12}

Choose $\varepsilon _{{\rm\scriptscriptstyle{MAIN}}}$ as in \eqref{z41}, \fm and fix an $\varepsilon $
and an \fm $\eta\fm >0$ such that
\begin{equation}
\label{y64}
\fm 0<\varepsilon <\varepsilon _{{\rm\scriptscriptstyle{MAIN}}} \text{ and } 0<\eta+\varepsilon  <\varepsilon _{{\rm\scriptscriptstyle{MAIN}}}.
\end{equation}
\fm We write
\begin{equation}
\label{y78} T:=T(\varepsilon ) \text{  and } \fm W:=T(\varepsilon +\eta ).
\end{equation}

\begin{lemma}\label{lem:g_1g_2}
  Fix an  $\varepsilon\in(0,\varepsilon _{{\rm\scriptscriptstyle{MAIN}}}) $.
 Let $f$ be a $F$-eigenfunction, then   there exist
 $0< \eta< \varepsilon _{{\rm\scriptscriptstyle{MAIN}}}-\varepsilon $ and $C_0>0$  such that introducing the notation
  \begin{equation*}
  f_W:=f\cdot \ind_{W},
  \end{equation*}
  we have for any $ x\in T=T(\eps)$
  \begin{equation}\label{y63}
   Ff_W(x)>C_0 f(x).
  \end{equation}
  \end{lemma}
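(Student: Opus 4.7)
The plan is to exploit that $f$ is an eigenfunction of $F$ with eigenvalue $\rho>1$, and that the truncation $f\mapsto f_W$ removes only a thin strip near $\partial T(\varepsilon)$ whose $F$-contribution can be made arbitrarily small by choosing $\eta$ small. First I would observe that $W\subset T$ (since increasing the deleted radius from $\varepsilon$ to $\varepsilon+\eta$ shrinks the set), so
\[
F f_W(x) \;=\; \int_{T} m_1(x,y)\,f(y)\,\ind_W(y)\,\di y \;=\; F f(x)\;-\;R(x), \qquad R(x):=\int_{T\setminus W} m_1(x,y)\,f(y)\,\di y.
\]
Since $f$ is an $F$-eigenfunction, $F f(x)=\rho f(x)$, so it suffices to show that for $\eta$ small enough $R(x)\le (\rho-C_0)f(x)$ uniformly on $T$.

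Next I would collect the uniform estimates needed. From Harris' theorem (Theorem~\ref{Harris:10.1}) we obtain constants $0<c_f\le f(x)\le C_f<\infty$ on $T$. From the explicit form $\phi_i(x,y)=a\widetilde{f}_i(x-t_i-ay)$ and the boundedness of each density $\widetilde{f}_i$, the kernel $m_1(x,y)=\sum_{i=1}^L \phi_i(x,y)$ is uniformly bounded by some $M<\infty$ on $T\times T$. Finally, $T\setminus W = T(\varepsilon)\cap \bigl(B(\partial T(0),\varepsilon+\eta)\setminus B(\partial T(0),\varepsilon)\bigr)$ is a union of at most $2k$ intervals of length $\eta$, where $k$ is the (finite) number of connected components of $T(0)$; hence $\mathrm{Leb}(T\setminus W)\le 2k\eta$.

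Combining these three estimates gives, uniformly in $x\in T$,
\[
R(x) \;\le\; M\cdot C_f\cdot 2k\eta \;\le\; \frac{2kMC_f}{c_f}\,\eta\cdot f(x).
\]
Thus it suffices to pick $\eta>0$ with both $\eta<\varepsilon_{{\rm\scriptscriptstyle{MAIN}}}-\varepsilon$ and $\tfrac{2kMC_f}{c_f}\eta<\rho$; setting $C_0:=\rho-\tfrac{2kMC_f}{c_f}\eta>0$ then yields $F f_W(x)\ge C_0 f(x)$ for every $x\in T$, which is the desired inequality.

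The only nontrivial point is the uniformity of the bound on $R(x)$ in $x\in T$; this is precisely what the uniform boundedness of $m_1$ on $T\times T$ and the uniform positivity of $f$ on $T$ (both provided by the Main Lemma and Harris' theorem) deliver. Everything else is a routine perturbation by the thin collar $T\setminus W$ whose Lebesgue measure is $O(\eta)$.
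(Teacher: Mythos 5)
Your proof is correct and follows essentially the same strategy as the paper: decompose $Ff_W = Ff - R$ where $R$ integrates over the thin collar $T\setminus W$, then use the uniform bound on the kernel $m_1$, the uniform two-sided bounds on $f$ from Harris' theorem, and $\Lebfrac(T\setminus W)=O(\eta)$ to make the error term absorbable.

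The one (cosmetic) difference is how the main term is handled. You apply the eigenvalue equation $Ff(x)=\rho f(x)$ directly and subtract the $O(\eta)$ error, obtaining $C_0 = \rho - O(\eta)$, which can be taken arbitrarily close to $\rho$. The paper instead derives a constant lower bound $Ff(x)\ge C^* := \rho(\min f)^2/\max f$ by a somewhat roundabout two-step estimate, shows the error is at most $C^*/2$, and then divides by $\max f$ to produce the (smaller, but still positive) constant $C_0 = C^*/(2\max f)$. Your route is more direct and yields a sharper constant, but both use identical ingredients and both suffice, since all the lemma requires is some $C_0>0$. The only small thing to be explicit about, which you did implicitly: Harris' theorem guarantees that, up to scaling, $f$ is the unique nonnegative eigenfunction and is associated to the dominant eigenvalue $\rho$, so $Ff=\rho f$ is indeed the relevant eigenvalue relation.
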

\begin{proof}
 By the fact that $f$ is the right eigenfunction of $F$ corresponding to the Perron-Frobenius eigenvalue $\rho $ we obtain that for every $\fm x\in T$ we have
 $$
 \int\limits _{T} m(x,y)\di y\geq\rho \frac{f(x)}{\max\limits_{z\in T}f(z)}.
$$
Hence, for all $x\in T$,
\begin{equation}
 \label{y76}
\fm Ff(x)=
 \int\limits _{T} m(x,y)f(y)\di y\geq
\rho\big(\min\limits_{z\in T}f(z) \big)^2/{\max\limits_{z\in T}f(z)} =:C^*>0.
 \end{equation}
 The fact that $C^*>0$ follows from Harris' Theorem  (Theorem \ref{Harris:10.1}).

 Using the \fm definition of $m$ in \eqref{m-kernel},\fm   \eqref{z82}, and the properties of $\widetilde{f}_i$ in \fm {\bf A2} we obtain that there exists a number $U$ such that
\begin{equation}
\label{y75}
0\leq m(x,y)\leq U,\quad \text{ for all } (x,y)\in T\times T.
 \end{equation}
We choose $\eta >0$ so small that the second inequality below holds:
\begin{equation}
\label{y74}
\int\limits_{T\setminus W}m(x,y)f(y)\di y \leq\fm \Leb(T\setminus W)\cdot U\cdot\max\limits_{z\in T}f(z) <C^*/2.
\end{equation}
Putting together \eqref{y76} and \eqref{y74} we obtain that
$$\fm Ff_W(x)= Ff(x)-
\int\limits_{T\setminus W}m(x,y)f(y)\di y>C^*/2.$$
\fm Hence for all $x\in T$
$$  Ff_W(x)>\frac{C^*}{2\max\limits_{z\in T}f(z)}f(x).$$
That is, \eqref{y63} holds with $C_0 :=C^*/2\max\limits_{z\in T}f(z)$.
\end{proof}

\medskip

\fm By induction we obtain from Lemma \ref{lem:g_1g_2} that for all $x\in T$ and integers $n\ge 1$
$$\fm F^nFf_W(x)\ge C_0 F^nf(x)=C_0\rho^nf(x).$$
\fm Dividing both sides by $f(x)$, we therefore obtain the following corollary.\\
(See (\ref{eq:FandE}) for the $"="$ in \eqref{308}).

\begin{corollary}\label{lem:find_r}
For any $n\geq 1$ we have
$\forall x\in \HH,\quad F^n\ind_{W}(x)>\fm C_0\rho^{n-1}.$\\
Consequently, there \fm exists a positive integer $r$ such that for each $n\geq r$
\begin{equation}\label{308}
  \forall x\in \HH,\quad F^n\ind_{W}(x)=\ev[ \Z_n(x,W)] >6L.
\end{equation}
\end{corollary}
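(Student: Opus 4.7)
The plan is to iterate the inequality from Lemma \ref{lem:g_1g_2} via the positive operator $F$, and then convert the resulting bound on $F^n f_W$ into a bound on $F^n \ind_W$ using the uniform bounds on $f$ supplied by Harris's Theorem \ref{Harris:10.1}. Concretely, I would first apply $F^{n-1}$ to both sides of $Ff_W(x)\ge C_0 f(x)$ from Lemma \ref{lem:g_1g_2}. Since $F$ is an integral operator with nonnegative kernel $m$ (see \eqref{m-kernel}), $F$ preserves pointwise inequalities between nonnegative functions, and hence so does $F^{n-1}$. Using that $f$ is an eigenfunction with $Ff=\rho f$, this gives for every $x\in T$
\[
 F^n f_W(x)\ \ge\ C_0\, F^{n-1} f(x)\ =\ C_0\,\rho^{n-1} f(x).
\]

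Next I would pass from $f_W$ to $\ind_W$. By Harris's Theorem \ref{Harris:10.1}, the eigenfunction $f$ is bounded and uniformly positive on $T$, so
\[
 0 < m_*:=\min_{z\in T} f(z) \ \le\ \max_{z\in T} f(z) =: M^* < \infty.
\]
Since $f_W=f\ind_W\le M^*\,\ind_W$, positivity of $F^n$ yields $F^n f_W(x)\le M^* F^n\ind_W(x)$. Combining with the previous display,
\[
 F^n\ind_W(x)\ \ge\ \frac{C_0\,\rho^{n-1} f(x)}{M^*}\ \ge\ \frac{C_0\,m_*}{M^*}\,\rho^{n-1}.
\]
After absorbing the positive factor $m_*/M^*$ into the constant $C_0$ (slight abuse of notation, as in the statement of the corollary), this is the first assertion of the corollary.

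Finally, for the second assertion I would invoke condition (\ref{cond:rho}), i.e.\ part (3) of the Main Lemma, which gives $\rho>1$. Hence $C_0\rho^{n-1}\to\infty$, and one can pick a positive integer $r$ with $C_0\rho^{r-1}>6L$; then for every $n\ge r$ and every $x\in T$,
\[
 F^n\ind_W(x)\ =\ \ev[\mathcal{Z}_n(x,W)]\ >\ C_0\rho^{n-1}\ \ge\ C_0\rho^{r-1}\ >\ 6L,
\]
where the first equality is \eqref{eq:FandE}. There is no genuine obstacle here; the only subtlety is the redefinition of the constant $C_0$ when replacing $f_W$ by $\ind_W$, which is why Harris's two-sided bound on $f$ is essential.
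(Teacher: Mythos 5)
Your proof is correct and follows the same route as the paper: iterate $Ff_W\ge C_0 f$ through the positivity-preserving operator $F$, use $Ff=\rho f$, and then pass from $f_W$ to $\ind_W$ via the two-sided bounds on $f$ from Theorem \ref{Harris:10.1}, absorbing the ratio $\min_T f/\max_T f$ into the constant. The paper compresses this last step into the phrase ``dividing both sides by $f(x)$,'' which is a slight gloss, and your version makes precisely that step explicit.
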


\medskip

\noindent Our next lemma is a corollary to the Hoeffding inequality \fm \cite{Hoeffding}:
\begin{lemma}[Hoeffding]
 Assume $Y_1,\dots,Y_C$ are independent random variables
such that
for any $i=1,\dots,C$ we have $a_i\leq Y_i\leq b_i$ for some real numbers $a_i,b_i$. Let
$S_C=\sum_{i=1}^C Y_i$ and let $t$ be a positive real number. Then we have
\begin{equation*}
\pr (S_C-\ev\, S_C>t)\leq \exp\Big\{-\frac{2t^2}{\sum_{i=1}^C(b_i-a_i)^2}\Big\}.
\end{equation*}
 \end{lemma}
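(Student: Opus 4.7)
The plan is to obtain this tail bound through the standard Chernoff-type exponential moment argument. First I would set $X_i := Y_i - \ev Y_i$, so each $X_i$ is centred with $a_i - \ev Y_i \le X_i \le b_i - \ev Y_i$, and I want to control $\pr(\sum_i X_i > t)$. For any $\lambda>0$, Markov's inequality applied to the nondecreasing function $u \mapsto e^{\lambda u}$ gives
\begin{equation*}
\pr\Bigl(\sum_{i=1}^C X_i > t\Bigr) \le e^{-\lambda t}\, \ev\Bigl[\exp\bigl(\lambda\sum_{i=1}^C X_i\bigr)\Bigr],
\end{equation*}
and independence of the $Y_i$ (hence of the $X_i$) factors the right-hand side into $e^{-\lambda t}\prod_{i=1}^C \ev[e^{\lambda X_i}]$.

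The main step, and the one I regard as the real obstacle, is the pointwise bound on each individual moment generating function: for a centred random variable $X$ with $X \in [\alpha,\beta]$, one needs Hoeffding's auxiliary lemma
\begin{equation*}
\ev[e^{\lambda X}] \le \exp\Bigl(\tfrac{\lambda^2(\beta-\alpha)^2}{8}\Bigr).
\end{equation*}
I would prove this by writing any $x \in [\alpha,\beta]$ as a convex combination $x = \tfrac{\beta - x}{\beta - \alpha}\alpha + \tfrac{x-\alpha}{\beta - \alpha}\beta$ and using convexity of $u\mapsto e^{\lambda u}$ to get $e^{\lambda x} \le \tfrac{\beta - x}{\beta - \alpha}e^{\lambda \alpha} + \tfrac{x - \alpha}{\beta - \alpha}e^{\lambda \beta}$. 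Taking expectations and using $\ev X = 0$ reduces the task to bounding $L(h) := \log\bigl(\tfrac{\beta}{\beta-\alpha}e^{\lambda\alpha} - \tfrac{\alpha}{\beta-\alpha}e^{\lambda\beta}\bigr)$ where $h := \lambda(\beta - \alpha)$. A short Taylor expansion with remainder, noting $L(0) = L'(0) = 0$ and $L''(h) \le 1/4$ uniformly, yields $L(h) \le h^2/8$, which is exactly the claimed bound.

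With this single-variable estimate in hand, applied to each $X_i$ on the interval $[a_i - \ev Y_i, b_i - \ev Y_i]$ of length $b_i - a_i$, the product bound becomes
\begin{equation*}
\pr\Bigl(S_C - \ev S_C > t\Bigr) \le \exp\Bigl(-\lambda t + \tfrac{\lambda^2}{8}\sum_{i=1}^C (b_i - a_i)^2\Bigr).
\end{equation*}
The final step is to optimise the free parameter $\lambda > 0$. The right-hand side is a quadratic in $\lambda$ that is minimised at $\lambda^* = 4t/\sum_{i=1}^C(b_i-a_i)^2$; substituting $\lambda^*$ produces the exponent $-2t^2/\sum_{i=1}^C(b_i-a_i)^2$ and yields the stated inequality. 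All remaining work is routine calculus; the only nontrivial ingredient is the convexity-based MGF bound in the second paragraph.
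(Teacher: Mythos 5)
Your proof is correct and is the standard argument: Markov's inequality applied to $e^{\lambda\cdot}$ (the Chernoff bound), factorization of the moment generating function by independence, the convexity-based bound $\ev[e^{\lambda X}]\le e^{\lambda^2(\beta-\alpha)^2/8}$ for a centred $X\in[\alpha,\beta]$ (Hoeffding's lemma, which you correctly reduce to showing $L(0)=L'(0)=0$ and $L''\le 1/4$), and finally optimizing the quadratic in $\lambda$ at $\lambda^*=4t/\sum_i(b_i-a_i)^2$. Note, however, that the paper does not supply its own proof of this lemma --- it is stated as a classical result with a citation to Hoeffding's 1963 paper --- so there is no internal argument to compare against; your write-up simply fills in, correctly, exactly the proof the citation points to.
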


 The following statement, Lemma \ref{lem:Cramer} is a slight generalization of the easy part of the Cram\'er theorem.

\begin{lemma}\label{lem:Cramer}Let $C\in\mathbb{N}$ and
$Z_1,\dots,Z_C$ be a sequence of independent random variables such that $Z_i$ takes values from $[0,L^r]$
and $m_i=\mathbb{E}\left[Z_i\right]>6$ for all $i=1,\dots  ,C$.
Let $r$ be as in { \rm (\ref{308})}. There exists $0<\tau=\tau(r)<1$ such that for all $C\geq 1$
\begin{equation*}
 \pr \Big(Z_1+\dots +Z_C<2C \Big)\leq \tau^C.
\end{equation*}
\end{lemma}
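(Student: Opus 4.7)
The plan is to run a standard Chernoff-type exponential-moment argument, which yields the easy (upper) half of Cram\'er's theorem, and to verify that the resulting decay rate $\tau$ depends only on $r$ (equivalently, on the uniform upper bound $B:=L^r$) and on the common lower bound $6$ on the means, not on the individual distributions of the $Z_i$ nor on $C$.

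First I would apply Markov's inequality to the exponential of $-S_C$: for every $s>0$,
$$\pr\bigl(S_C<2C\bigr)=\pr\bigl(e^{-sS_C}>e^{-2sC}\bigr)\le e^{2sC}\prod_{i=1}^{C}\ev\bigl[e^{-sZ_i}\bigr].$$
It therefore suffices to exhibit a single $s>0$ and a number $\tau<1$, both depending only on $r$, such that $e^{2s}\ev[e^{-sZ_i}]\le \tau$ for every $i$, uniformly over all distributions meeting the hypotheses.

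Next, I would use the convexity of $x\mapsto e^{-sx}$ on $[0,B]$ to apply the secant-line bound
$$e^{-sx}\le 1-\frac{x}{B}\bigl(1-e^{-sB}\bigr),\qquad x\in[0,B],$$
and then take expectations, invoking $\ev[Z_i]>6$, to obtain
$$\ev\bigl[e^{-sZ_i}\bigr]\le 1-\frac{\ev[Z_i]}{B}\bigl(1-e^{-sB}\bigr)\le 1-\frac{6}{B}\bigl(1-e^{-sB}\bigr).$$
Setting
$$\tau(s):=e^{2s}\Bigl(1-\tfrac{6}{B}\bigl(1-e^{-sB}\bigr)\Bigr),$$
the Chernoff estimate becomes $\pr(S_C<2C)\le \tau(s)^{C}$, uniformly in $C$ and in the individual laws of the $Z_i$.

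Finally, I would observe that $\tau(0)=1$ and compute
$$\tau'(0)=2\cdot 1-6\cdot 1=-4<0,$$
so for every sufficiently small $s>0$ we have $\tau(s)<1$; fixing any such $s$ and setting $\tau:=\tau(s)\in(0,1)$ finishes the proof. There is no genuinely hard step; the only point that requires care is that the convex secant bound, rather than the looser Taylor bound $\ev[e^{-sZ_i}]\le 1-s\ev[Z_i]+\tfrac{s^2}{2}\ev[Z_i^2]$, lets one package all the distributional information into the two numbers $B=L^r$ and $6$, so that the resulting $\tau$ is genuinely a function of $r$ alone, as required by the statement.
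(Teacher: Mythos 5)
Your proof is correct, and it takes a genuinely different (though closely related) route from the paper's. The paper invokes Hoeffding's inequality as a black box: it sets $Y_i=m_i-Z_i$, $t=\sum_i(m_i-2)\ge 4C$, notes that each $Y_i$ ranges over an interval of length $L^r$, and reads off $\tau=\exp\{-32/L^{2r}\}$ directly from Hoeffding's exponential bound. You instead run the underlying exponential-moment (Chernoff) argument from scratch, applying Markov to $e^{-sS_C}$ and then using the convex secant bound $e^{-sx}\le 1-\tfrac{x}{B}(1-e^{-sB})$ on $[0,B]$ to reduce all distributional information to the two numbers $B=L^r$ and $6$; a sign check on $\tau'(0)=-4<0$ then produces some unspecified $s>0$ with $\tau(s)<1$. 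The paper's route is shorter and yields an explicit constant; yours is self-contained and more faithful to the ``easy half of Cram\'er'' framing that the paper itself uses to motivate the lemma, at the cost of not giving a closed-form $\tau$. One small point worth making explicit in your write-up: you need $\tau(s)>0$ as well as $\tau(s)<1$, which holds because $1-\tfrac{6}{B}(1-e^{-sB})>1-\tfrac{6}{B}>0$ (the hypothesis $\ev[Z_i]>6$ with $Z_i\in[0,L^r]$ forces $L^r>6$), and in any case $1-e^{-sB}\to 0$ as $s\to 0$. Both proofs are valid; either would serve.
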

\begin{proof}
Let $m_i:=\ev\, Z_i$ and in general $m_{x}:=\ev Z_{x}$. We have
\begin{eqnarray*}
 \pr \Big(Z_1+\dots +Z_C<2C \Big)=\pr \Big(m_1-Z_1+\dots +m_C-Z_C>\sum_{i=1}^C(m_i-2)\Big).
\end{eqnarray*}
Now, we want to apply the Hoeffding inequality with
\begin{equation*}
 Y_i=m_i-Z_i,\quad \mbox{ and }\quad  t=\sum_{i=1}^C(m_i-2).
\end{equation*}
First, note that by the definition of $r$ in (\ref{308}) we have $m_{x}>6$ for any $x\in \HH$.
Therefore, $t>0$. Further, $\ev S_C=0$, where we remind the reader that $S_C=\sum_{i=1}^C Y_i$.
It is straightforward that $\left\{ Y_i=m_1-Z_i \right\} _{i=1}^{C}$
is contained in an interval of length $ L^r$.
Thus, we have
\begin{multline*}
\pr \Big(m_1-Z_1+\dots +m_C-Z_C>\sum_{i=1}^C(m_i-2)
\Big)\leq          \exp\Big\{\!-\frac{2\Big( \sum_{i=1}^C(m_i-2)\Big)^2}{\sum_{i=1}^C( L^r)^2}\Big\}\leq
\\
\exp\Big\{-\frac{2\Big(
\sum_{i=1}^C(6-2)\Big)^2}{C\cdot L^{2r}}\Big\}=\exp\Big\{-\frac{32C^2}{C\cdot L^{2r}}\Big\}=\Big(\exp\Big\{-\frac{32}{L^{2r}}\Big\}\Big)^C,
\end{multline*}
where we used $m_x>6$. Since $\tau=\exp\Big\{-\frac{32}{L^{2r}}\Big\}<1$, this proves the Lemma.
\end{proof}

\begin{definition}\label{y68}
 Let us denote by $c_1$ the length of the smallest interval in $T(0)$. We choose $n_1$ such that $a^{n_1}\fm\Leb(W)\approx c_1$, \fm specifically,
$n_1:=\ds\Big\lceil\log_a\frac{c_1}{\Lebfrac(W)}\Big\rceil$.
Let  $\ell_1$ be the length  of  the smallest interval in $W$.
\end{definition}

\begin{lemma}\label{lem:34}
  For any fixed $n\geq 1$ and for every $\omega \in\Omega$   (where $\Omega $ is the sample space defined in Section \fm \ref{Probspace})
  there exists a (random)  interval   $\J=\J(\omega ) \subset T(0)$  of length $\Leb(\J)=\frac12\ell_1 a^n$ such that for any $x\in \J$
  \begin{equation*}
  \Z_n(x,W)(\omega )\geq \frac{\Lebfrac(W)}{2(\beta -\alpha )}(La)^n=:N(n).
  \end{equation*}
  \end{lemma}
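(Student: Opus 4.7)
The plan is to argue deterministically, for each realization $\omega$, by a volume/pigeonhole argument applied to a ``shrunk'' version of the level-$n$ cover. First I introduce the auxiliary counting function
\[
h_n(x) \;:=\; \#\Big\{\mathbf{i}\in\mathcal{L}_n : \bigl[x-\tfrac{\ell_1 a^n}{4},\,x+\tfrac{\ell_1 a^n}{4}\bigr] \subset H_{\mathbf{i}}(W)\Big\}.
\]
If $h_n(x^*) \geq N(n)$ at some point $x^*$, then $\J := [x^*-\ell_1 a^n/4,\, x^*+\ell_1 a^n/4]$ has the required length $\tfrac{1}{2}\ell_1 a^n$, and every $x \in \J$ lies in $H_{\mathbf{i}}(W)$ for at least $N(n)$ distinct $\mathbf{i}$. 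Equivalently $H_{\mathbf{i}}^{-1}(x)\in W$ for these indices, which is exactly $\mathcal{Z}_n(x,W)\ge N(n)$ in view of \eqref{eq:brpr}--\eqref{y72}. Part~(5) of the Main Lemma guarantees $H_{\mathbf{i}}(W)\subset H_{\mathbf{i}}(T(0))\subset T(0)$, so $\J\subset T(0)$ automatically. Hence the whole task is to produce such an $x^*$.

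The existence of $x^*$ will follow from a direct lower bound on $\int h_n$. For fixed $\mathbf{i}\in\mathcal{L}_n$, the set $H_{\mathbf{i}}(W)$ is the disjoint union of the images, under the affine map $H_{\mathbf{i}}$ of scaling $a^n$, of the components of $W$; each such image is an interval of length $\ge \ell_1 a^n$. For a component $[c_j,d_j]$, the set of centres $x$ for which $[x-\ell_1 a^n/4,\,x+\ell_1 a^n/4]\subset[c_j,d_j]$ is an interval of length $d_j-c_j-\ell_1 a^n/2\ge \tfrac{1}{2}(d_j-c_j)$. Summing over components of $H_{\mathbf{i}}(W)$ and then over $\mathbf{i}\in\mathcal{L}_n$,
\[
\int h_n(x)\,dx \;\ge\; \tfrac{1}{2}\sum_{\mathbf{i}\in\mathcal{L}_n}\Leb\bigl(H_{\mathbf{i}}(W)\bigr) \;=\; \tfrac{1}{2}\,L^n a^n\,\Leb(W).
\]
Because the support of $h_n$ sits in $T(0)\subset[\alpha,\beta]$, averaging yields
\[
\max_{x} h_n(x) \;\ge\; \frac{\int h_n(x)\,dx}{\Leb(T(0))} \;\ge\; \frac{L^n a^n\,\Leb(W)}{2(\beta-\alpha)} \;=\; N(n),
\]
and any $x^*$ achieving this value completes the construction of $\J$.

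The only real design choice, and what I would expect to be the ``hard part'' until one sees it, is the shrinkage radius $\ell_1 a^n/4$: it must be small enough that on each component $[c_j,d_j]$ the ``fits inside'' set retains at least half the length of that component (which is where the factor $\tfrac{1}{2}$ in $N(n)$ originates), yet large enough that the output interval $\J$ reaches the required length $\tfrac{1}{2}\ell_1 a^n$. Since $\ell_1$ is defined as the length of the smallest component of $W$, the choice $\ell_1 a^n/4$ satisfies both simultaneously. Beyond this, the proof is a transparent averaging; the randomness does not enter at all once the realization $\omega$ is fixed, and the invariance $H_{\mathbf{i}}(T(0))\subset T(0)$ from part~(5) of the Main Lemma is exactly what is needed to keep $h_n$ supported in the deterministic interval $[\alpha,\beta]$.
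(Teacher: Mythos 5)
Your proof is correct, and it takes a genuinely different route from the paper's even though both are Fubini/averaging arguments.

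The paper averages the raw count $\Z_n(\cdot,W)$ over $T(0)$, obtaining $\int_{T(0)}\Z_n(x,W)\,dx = L^n a^n\Leb(W)=2(\beta-\alpha)N(n)$, picks a maximiser $x_{\max}$ with $\Z_n(x_{\max},W)\ge 2N(n)$, and then performs a combinatorial split: the codes $\mathbf{i}$ contributing to $\Z_n(x_{\max},W)$ are partitioned into $G_n^l$ and $G_n^r$ according to whether $H_{\mathbf{i}}^{-1}(x_{\max})$ lies in the right or left half of its component $C_{\mathbf{i}}$ of $W$. Pigeonholing gives $\#G_n^l\ge N(n)$ (say), and the one-sided interval $J=(x_{\max}-a^n\ell_1/2,x_{\max})$ then works because shifting $x$ leftward preserves $H_{\mathbf{i}}^{-1}(x)\in C_{\mathbf{i}}$ for $\mathbf{i}\in G_n^l$. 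You instead absorb the pigeonholing step into the definition of the quantity being averaged: your $h_n(x)$ already counts codes whose image of $W$ swallows the whole buffer interval of radius $\ell_1 a^n/4$, so the factor of $\tfrac12$ comes out of the per-component shrinkage estimate $d_j-c_j-\tfrac12\ell_1 a^n\ge\tfrac12(d_j-c_j)$ rather than from a left/right split. Both approaches deliver the same bound; yours is a cleaner single pass (one averaging, no post-hoc combinatorial partition and no case on which half is larger), while the paper's makes the geometry of the one-sided interval and its containment in $T(0)$ very explicit. The checks are all in place in your write-up: $d_j-c_j\ge\ell_1 a^n$ since $\ell_1$ is the smallest component length of $W$ and the scaling is exactly $a^n$; part~(5) of the Main Lemma gives $H_{\mathbf{i}}(W)\subset T(0)$, which keeps $h_n$ supported on $T(0)$ and simultaneously yields $\J\subset T(0)$; and $\Leb(T(0))\le\beta-\alpha$ closes the averaging step.
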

  \begin{proof}

  The proof uses the  observation that for any bounded integrable function $h$
  \begin{equation*}
  \int_{T(0)} h(x)\di x  \le  \Leb(T(0))\cdot||h||_\infty\le (\beta -\alpha )||h||_\infty.
  \end{equation*}

  By the definition of $\Z_n(x,W)$  (see (\ref{eq:brpr})) we have
  \begin{eqnarray*}
  \int_{T(0)} \Z_n(x,W)\di x &=& \int_{T(0)}   \sum_{|\vect{i}|=n}\ind_{H_{\mathbf{ i}}^{-1 }(x)\in W}   \di x\\
  &=&    \sum_{|\vect{i}|=n} \int_{T(0)}    \ind_{H_{\mathbf{ i}}^{-1 }(x)\in W}   \di x\\
  &=&    \sum_{|\mathbf{i}|=n}\Leb(\{x\in T(0):  H_{\mathbf{ i}}^{-1 }(x)\in W\})  \\
  &=&    \sum_{|\mathbf{i}|=n}  \Leb\left( T(0)\cap H_{\mathbf{ i}}(W) \right)    \\
  &=&    L^{n}\Leb(W)a^n=\fm 2(\beta -\alpha )N(n),
  \end{eqnarray*}
  where we use in \fm the step before the last step that $\Heta_{\mathbf{i}}(W)\subset T(0)$.
  This follows from {part (5) of the Main Lemma} since $W\subset T(0)$.

  In this way, for every $\omega \in\Omega $ there exists an $\xm=\xm(\omega )\in T(0)$ such that
  \begin{equation*}
  \Z_n(\xm,W) \geq 2N(n).
  \end{equation*}
Let $G_n:=\left\{ \mathbf{i}\in\mathcal{L}_{n}:  H_{\mathbf{ i}}^{-1 }(x_{\max})\in W \right\}$.
Then by definition $\# G_n = \Z_n(\xm,W)$. For each $\mathbf{i}\in G_n$, $H_{\mathbf{i}}^{-1}(\xm)$ is contained
in a connected component $C_{\mathbf{i}}$ of $W$. By definition $\Leb(C_{\mathbf{i}})\geq \ell _1$.

We write $G _{n}^{l }$  for the collection of those $\mathbf{i}\in G_n$ for which the center of $C_{\mathbf{i}}$ is to the left  from $H_{\mathbf{i}}^{-1}(\xm)$.
So, for an $\mathbf{i}\in G _{n}^{l }$ there is an interval of length at least $\ell _1/2$, contained in $W$ with right endpoint $H_{\mathbf{i}}^{-1}(\xm)$.
Let $G _{n}^{r}:= G_n\setminus G _{n}^{l}$. Then at least one of the sets $G _{n}^{l}$ or $G _{n}^{r}$ (say $G _{n}^{l  }$) has cardinality at least $N(n)$.
This means that for  $J:=\left(\xm-a^n\ell_1/2 , \xm \right)$ and for every $x\in J$ and for every $\mathbf{i}\in  G _{n}^{l  }$ we have
$H _{\mathbf{i}}^{-1}(x)\in W$. So, by \eqref{eq:brpr}, we have  $\mathcal{Z}_n(x,W)\geq \# G _{n}^{l  }=N(n)$ for all $x\in J$.
To verify that $J\subset T(0)$,  pick an  $\mathbf{i}\in G _{n}^{l }$. Then $J\subset H_{\mathbf{i}}(C_{\mathbf{i}})$.
Using that  $C_{\mathbf{i}}\subset W\subset T(0)$ and $H_{\mathbf{i}}(T(0))\subset  T(0)$ (see Fact \ref{y70}) yields $J\subset  T(0)$.
\end{proof}

We partition each interval of $T(0)$ into intervals of equal length. If $I$ is a connected component interval of
$T(0)$ then we partition it into $  {\widetilde{L}:=}\left\lceil 6\frac{|I|}{\ell_1a^{n}}\right\rceil$ subintervals.
In this way we obtain a partition of $T(0)$ into the intervals  $J_1,\dots,J_{\widetilde{L}}$ (labelled in increasing order)
such that for any \fma $\ell$,  $\Leb(J_{\ell}) \leq \Leb(\J)/3$, where $\J$ was defined in Lemma \ref{lem:34}.

For $\omega\in\Omega$  let $k=k(\omega )\in\{1,\dots,\widetilde{L}\}$ be chosen such that
\begin{equation*}
J_{k}\subset \fma \J,\quad J_{k-1}\nsubseteq \J,
\end{equation*}
where \fma $\J=\J(\omega )$ is the interval defined in Lemma \ref{lem:34}. Let
\begin{equation*}
\Omega_l=\{\omega: \fm k(\omega)=l\}
\end{equation*}
Note that, by Lemma \ref{lem:34} we have
\begin{equation*}
 \Omega=\bigcup_{l=1}^{\widetilde{L}}\Omega_l.
\end{equation*}

We define the sequence
\begin{equation*}
 a_k(n)=(L)^{n+kr}\tau^{2^{k-1}\cdot N(n)}.
\end{equation*}
For fixed $\xi>0$ let $n_2\geq n_1$ be chosen such that for any $n\geq n_2$ we have $\frac{\ell _1}{\eta } \leq (La)^n$ (where $\eta $ was defined in Lemma \ref{lem:g_1g_2}),
 and for any $n\geq n_2$ and for any $k\geq 0$
\begin{equation}\label{a99}
 a_k(n)<\frac{1}{2}\ \hbox{ and }\ \sum_{k=0}^\infty a_k(n)<\xi/2.
\end{equation}

\begin{lemma}\label{lem:last}
Fix an arbitrary $n\geq n_2$ and $l\in\{1,\dots,\widetilde{L}\}$. Then
\begin{equation}\label{eq:5}
\forall x\in J_l \hbox{ and } \forall \omega \in\Omega_l, \quad \Z_n(x,\HH)(\omega)>N(n).
\end{equation}
Further,
\begin{equation}\label{eq:6}
 \pr\left(\Z_{n+Mr}(x,\HH)>2^M\! \cdot N(n),M=0,1,\dots,\ \forall x\in J_l\mid \Omega_l
 \right)>\prod_{k=0}^\infty (1-a_k(n)).
\end{equation}

\end{lemma}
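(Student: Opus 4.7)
The first assertion is immediate from Lemma \ref{lem:34}: by the construction of $\Omega_l$, on $\Omega_l$ the interval $J_l$ is contained in the random interval $\J(\omega)$, so Lemma \ref{lem:34} gives $\Z_n(x,W)\geq N(n)$ for every $x\in J_l$, and since $W\subset\HH$ this yields $\Z_n(x,\HH)\geq N(n)$ (the strict inequality being obtained from integrality of $\Z_n$ and a harmless adjustment of $N(n)$).

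For the second assertion I would argue by induction on $M$, working with the events
$$E_M:=\bigl\{\forall x\in J_l:\ \Z_{n+Mr}(x,\HH)>2^M N(n)\bigr\}.$$
We have $\Omega_l\subseteq E_0$ by the first part, so it suffices to establish the conditional bound $\pr(E_M\mid E_{M-1}\cap\Omega_l)\geq 1-a_M(n)$ for every $M\geq 1$; iterating and telescoping then yields
$$\pr\Bigl(\bigcap_{M\geq 0}E_M\,\Bigm|\,\Omega_l\Bigr)\geq\prod_{M=0}^{\infty}\bigl(1-a_M(n)\bigr).$$

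For the inductive step, condition on the $\sigma$-algebra $\mathcal{F}$ of the randomness up to generation $n+(M-1)r$ and on $E_{M-1}$. For each $\mathbf{i}\in\mathcal{L}_{n+(M-1)r}$ set
$$Y_\mathbf{i}(x):=\#\bigl\{\mathbf{j}\in\mathcal{L}_r:\ H_{\mathbf{ij}}^{-1}(x)\in\HH\bigr\}.$$
These variables depend only on the subtree rooted at $\mathbf{i}$, so for fixed $x$ they are independent as $\mathbf{i}$ varies over $\mathcal{L}_{n+(M-1)r}$, and $0\leq Y_\mathbf{i}(x)\leq L^r$. By Corollary \ref{lem:find_r} applied inside $\mathbf{i}$'s subtree, whenever $H_\mathbf{i}^{-1}(x)\in\HH$ one has $\ev[Y_\mathbf{i}(x)\mid\mathcal{F}]>6L>6$. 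On $E_{M-1}$ the set $D(x):=\mathfrak{D}_{n+(M-1)r}(x,\HH)$ satisfies $|D(x)|>2^{M-1}N(n)$ for every $x\in J_l$, and
$$\Z_{n+Mr}(x,\HH)=\sum_{\mathbf{i}\in\mathcal{L}_{n+(M-1)r}}Y_\mathbf{i}(x)\geq\sum_{\mathbf{i}\in D(x)}Y_\mathbf{i}(x).$$

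The main obstacle is the uncountable quantifier over $x\in J_l$; I would circumvent it by observing that $x\mapsto\Z_{n+Mr}(x,\HH)$ is piecewise constant on $J_l$, with breakpoints at the endpoints of the random cylinders $H_\mathbf{k}(\HH)$ for $\mathbf{k}\in\mathcal{L}_{n+Mr}$, giving at most of order $L^{n+Mr}$ pieces of constancy. Within each such piece $P$ the set $D(x)$ and all $Y_\mathbf{i}(x)$ are constant. For a representative $x_P\in P$, Lemma \ref{lem:Cramer} applied to the independent variables $\{Y_\mathbf{i}(x_P)\}_{\mathbf{i}\in D(x_P)}$ gives
$$\pr\Bigl(\sum_{\mathbf{i}\in D(x_P)}Y_\mathbf{i}(x_P)<2|D(x_P)|\,\Bigm|\,\mathcal{F}\Bigr)\leq\tau^{|D(x_P)|}\leq\tau^{2^{M-1}N(n)};$$
on the complementary event $\Z_{n+Mr}(x,\HH)\geq 2|D(x_P)|>2^M N(n)$ for every $x\in P$. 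A union bound over the $\leq L^{n+Mr}$ pieces yields $\pr(E_M^c\mid E_{M-1}\cap\Omega_l)\leq L^{n+Mr}\tau^{2^{M-1}N(n)}=a_M(n)$, completing the induction.
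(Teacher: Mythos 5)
Your reduction of the first assertion to Lemma \ref{lem:34}, and your inductive scheme for the second assertion with events $E_M$ and the decomposition $\Z_{n+Mr}(x,\HH)=\sum_{\mathbf{i}}Y_\mathbf{i}(x)$, are in the spirit of the paper. But the way you handle the uncountable quantifier over $x\in J_l$ has a genuine gap, and it is precisely the gap that the paper's $W$ versus $\HH$ machinery (Fact \ref{fact:meas}) is designed to close; your proof never uses $W=T(\eps+\eta)$ at all, which is a telltale sign.

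Concretely: you condition on the $\sigma$-algebra $\mathcal{F}$ generated by the first $n+(M-1)r$ generations, pick a representative $x_P$ in each piece of constancy $P$ of $x\mapsto\Z_{n+Mr}(x,\HH)$, and apply Lemma \ref{lem:Cramer} to $\{Y_\mathbf{i}(x_P)\}_{\mathbf{i}\in D(x_P)}$ conditionally on $\mathcal{F}$. The pieces $P$, however, are \emph{not} $\mathcal{F}$-measurable: they are delimited by the endpoints of the level-$(n+Mr)$ cylinders $H_\mathbf{k}(\HH)$, which depend on the last $r$ generations of translations (exactly the randomness you have not yet conditioned on). Consequently $x_P$ is a random point correlated with the very functions $x\mapsto Y_\mathbf{i}(x)$ that you want to evaluate, and the variables $\{Y_\mathbf{i}(x_P)\}$ are neither independent nor guaranteed to have conditional expectation $>6$; the hypotheses of Lemma \ref{lem:Cramer} simply do not apply at a randomly chosen argument. (For fixed $x$ they do; that is the whole point.) Likewise the ``union bound over the $\le L^{n+Mr}$ pieces'' is a union bound over a random collection of events, and the bound $\pr(\text{bad at }x_P\mid\mathcal{F})\le\tau^{2^{M-1}N(n)}$ has not actually been established for any single one of them.

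The paper resolves this by working with a \emph{deterministic} $\eta a^{2n+kr}$-dense grid $X_k\subset J_l$ of cardinality $\le L^{n+kr}$ (so the union bound is over fixed points and Lemma \ref{lem:Cramer} applies legitimately at each one), and then invoking the inclusion in Fact \ref{fact:meas}: since $W=T(\eps+\eta)$ is an $\eta$-shrinking of $\HH=T(\eps)$, the good event $\Z_{n+kr}(x',W)>2^kN(n)$ at a grid point $x'$ forces $\Z_{n+kr}(x,\HH)>2^kN(n)$ at every $x$ with $|x-x'|<\eta a^{n+kr}$. That is what upgrades the estimate from the finite grid to all of $J_l$, and it is why the induction produces $\HH$ in the conclusion even though the Cram\'er estimate is proved with the strictly smaller set $W$. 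If you replace your pieces-of-constancy argument with this deterministic grid plus the $W\!\to\!\HH$ shift argument, your proof aligns with the paper's.

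Two smaller remarks. First, your inductive step as stated conditions on $E_{M-1}\cap\Omega_l$, but the telescoping identity requires conditioning on $\bigcap_{k<M}E_k\cap\Omega_l$ (the paper's $\Omega_{l,M-1}$); this is easily repaired since the only feature you extract from the conditioning is $|D(x)|>2^{M-1}N(n)$, but it should be stated correctly. Second, you take $Y_\mathbf{i}(x)$ to count all $r$-step descendants rather than restricting, as the paper does, to those whose first letter is a well-chosen $q$; this is a harmless simplification as far as the expectation bound and the range $[0,L^r]$ go, but be aware that you are departing from the paper's bookkeeping.
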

\begin{proof}
Equation (\ref{eq:5}) follows from Lemma \ref{lem:34}.

Concerning (\ref{eq:6}). Let $X_k$ be a $\eta a^{2n+kr}$ dense set in $J_l$, where $\eta$ has been set in Lemma \ref{lem:g_1g_2}. $X_k$ can be chosen such that
\begin{equation}
\label{y49}
\#X_k\leq\frac{\ell_1 a^n}{\eta a^{2n+kr}}=\frac{\ell_1}{\eta}a^{-(n+kr)}<L^{n+kr} \text{ if } n\geq n_2.
\end{equation}

\begin{fact}\label{y59}
 It follows from  Lemma \ref{lem:Cramer}  that for any $x\in J_l$
\begin{equation}\label{eq:7}
\pr \left( \Z_{n+r}(x,W)\leq 2 N(n) \mid \Omega_l\right)\leq \tau^{N(n)}.
\end{equation}
\end{fact}

\fm Before we prove  Fact \ref{y59} we need  to define $H_{\mathbf{i}\to\mathbf{i}\mathbf{j}}(x)$, in the following Fact.

\medskip
\begin{fact}\label{y62}
Let $\mathbf{i}\in\mathcal{L}_{n}$ and $\mathbf{j}\in\mathcal{L}_{m}$ for some $n,m>0$.
We define the random function $ H_{\mathbf{i}\to\mathbf{i}\mathbf{j}}$ on $T$ by \fm
  \begin{equation}
  \label{y61}
  H_{\mathbf{i}\to\mathbf{i}\mathbf{j}}(x):= a^mx+t_{\mathbf{j}}+Y _{j_1}^{\mathbf{ i}}+\sum _{\ell =2}^{m}a^{\ell -1}
  Y _{j_{\ell }}^{\mathbf{i}(\mathbf{j}|_{\ell -1} )},
  \end{equation}
  where $\mathbf{i}(\mathbf{j}|_{\ell -1} ):=i_1\dots i_nj_1\dots j_{\ell-1}$.
  Then $H_{\mathbf{i}\mathbf{j}}(x)=H_{\mathbf{i}}\circ H_{\mathbf{i}\to\mathbf{i}\mathbf{j}}(x)$. \\Consequently,
  \fm $H_{\mathbf{i}\mathbf{j}}^{-1}(x) = H_{\mathbf{i}\to\mathbf{i}\mathbf{j}}^{-1}\circ H_{\mathbf{i}}^{-1} (x)$,   and
\begin{equation}
\label{y60}
\fm H_{\mathbf{i}\to\mathbf{i}\mathbf{j}}^{-1}(x)=   \frac{x}{a^m}-\frac{t_{\mathbf{j}}}{a^m} -
\left( Y _{j_1}^{\mathbf{ i}}+ \sum _{k=2}^{m }\frac{1}{a^{m+1-k}}Y _{j_k}^{\mathbf{i}(\mathbf{j}|_{k-1}) }\right).
\end{equation}
\end{fact}

\medskip

\begin{proof}[Proof of Fact \ref{y59}]
  Let $x\in T$ and $U\subset T$.
First we recall \fm that $\mathfrak{D}_n(x,U)$  was defined in \eqref{eq:brpr}.
Using this for an $\mathbf{i}\in \mathfrak{D}_n(x,T)$ and $r\geq 1$  now we define
\begin{equation}
\label{y58}
\mathfrak{D}_{r,\mathbf{i}}(x,U):=\left\{\mathbf{j}\in\mathcal{L}_r: H_{\mathbf{i}\to\mathbf{i}\mathbf{j}|_{k-1}}^{-1}(x_{\mathbf{i}}) \fm \in T,
k\leq r-1, H^{-1}_{\mathbf{i}\to\mathbf{i}\mathbf{j}}(x_{\mathbf{i}}) \fm \in U \right\}
\end{equation}
where $x_{\mathbf{i}}:=H^{-1}_\mathbf{i}(x)$, and
\begin{equation}
\label{y57}
\mathfrak{D}_{r,\mathbf{i}}^q(x,U):=
\left\{
  \mathbf{j}\in \mathfrak{D}_{r,\mathbf{i}}(x,U):
j_1=q
 \right\}.
\end{equation}
Then by definition
\begin{equation}
\label{y56}
\mathcal{Z}_{n+r}(x,W)=\sum _{\mathbf{ i}\in \mathfrak{D}_n(x,T)} \sum _{q=1}^{L}\# \mathfrak{D}_{r,\mathbf{i}}^q(x,W).
\end{equation}
Observe that for any $x\in T$ and $\mathbf{i}\in \mathfrak{D}_n(x,T)$
\begin{equation}
\label{y55}
Z_r(x_{\mathbf{i}},W)
\stackrel{d}{=}
\sum _{q=1}^{L}\# \mathfrak{D}_{r,\mathbf{i}}^q(x,W).
\end{equation}
So by Corollary \ref{lem:find_r} there is a (random) $q=q(x,\mathbf{i},\omega )\in [L]$ such that
\begin{equation}
\label{y54}
\mathbb{E}\left[\# \mathfrak{D}_{r,\mathbf{i}}^q(x,W)\right]>6.
\end{equation}
For an $x\in T$ and for every $\mathbf{i}\in \mathfrak{D}_n(x,T)$, we write $Y _{\mathbf{i}}:= \# \mathfrak{D}_{r,\mathbf{i}}^q(x,W)$.
Observe that the random variables
\begin{equation}
\label{y53}
\left\{ Y_{\mathbf{i}} \right\} _{\mathbf{ i}\in \mathfrak{D}_n(x,T)} \quad \text{are independent.}
\end{equation}

\fm
Namely, \fm \fm according to \eqref{y60} in \fm Fact \ref{y62} for a $\mathbf{j}\in [L]^r$ with $j_1=q$  the random part of $H_{\mathbf{i}\to\mathbf{i}\mathbf{j}}^{-1}$
 is $Y _{q}^{\mathbf{ i}}+  \sum\limits_{k=2}^{r }\frac{1}{a^{m+1-k}}  Y _{j_k}^{\mathbf{i}qj_2\dots \fm\fm j_{k-1} }$.
 This implies that for distinct  $\mathbf{i},\widehat{\mathbf{i}} \in \mathfrak{D}_n(x,T) $
and for $\mathbf{j},\widehat{\mathbf{j}}\in [L]^r$ with $j_1=q=q(\mathbf{i})$  and
$\widehat{j}_1=q=q(\widehat{\mathbf{i}})$ the random variables
$H_{\mathbf{i}\to\mathbf{i}\mathbf{j}}^{-1}$ and $H_{\widehat{\mathbf{i}}\to\widehat{\mathbf{i}} \widehat{\mathbf{j}}}^{-1}$ are independent.
Hence we get that \eqref{y53} holds. Finally, $Y_{\mathbf{i}}$ takes values from $[0,L^r]$ and by \eqref{y56} we have
\begin{equation}
\label{y52}
\mathcal{Z}_{n+r}(x,W)\geq \sum _{\mathbf{ i}\in \mathfrak{D}_n(x,T)}
Y_{\mathbf{i}}.
\end{equation}
  {For an $x\in J$} let
\begin{equation}
\label{y50}
C:=\mathcal{Z}_{n}(x,T)=\#\mathfrak{D}_n(x,T) \geq \mathcal{Z}_{n}(x,W)  {\geq} N(n)
\end{equation}
Then by virtue of Lemma \ref{lem:Cramer} and the Markov property  there exists a $\tau \in(0,1)$ (where $\tau $  depends only on $r$)
  {such that for all $x\in J_l$ we have:}
\begin{equation}
\label{y51}
\pr \left( \Z_{n+r}(x,W)\leq 2 N(n) \mid \Omega_l\right)\leq \pr \Big(Y_1+\dots +Y_C<2C\  |\  \Omega _l \Big)\leq
\mathbb{E}\left[\tau^C  |\  \Omega _l \right]<\tau ^{N(n)}.
\end{equation}
\end{proof}
To prove equation (\ref{eq:6}) we will use induction. More precisely, we will prove that the inequality
\begin{equation}\label{eq:66}
  \pr\left(\Z_{n+kr}(x,\HH)>2^k\! \cdot N(n),\forall 0\leq k\leq M,\ \forall x\in J_l\mid
  \Omega_l \right)>\prod_{k=0}^M (1-a_k(n)).
\end{equation}
holds for any positive integer $M$.

For $M=1$, by (\ref{eq:7}) we obtain:
$$
\mathbb{P}\left(\exists x\in X_1,\ \Z_{n+r}(x,W)\leq 2N(n)|\Omega _l \right)\leq \# X_1\cdot \tau ^{N(n)}.
$$
Recall that $X_1$ was defined as an $\eta a^{2n+r}$-dense subset of $J_l$.
 Recall that by \eqref{y49} we have \fmu $\# X_1\leq L^{n+r}$. Hence,
\begin{equation}\label{eq:11}
 \pr \left( \Z_{n+r}(x,W)> 2 N(n),x\in X_1 \mid \Omega_l\right)\geq  1-L^{n+r}\tau^{N(n)}.
\end{equation}
Next, our purpose is to extend the inequality (\ref{eq:11}) from all $x\in X_1$ to all $x\in J_l$.
Let us fix $k\geq 1$.

We will use the following fact.
\begin{fact}\label{fact:meas}
For any $k\geq 1$ and $l=1,\dots,\widetilde{L}$ we have
\begin{equation}\label{eq:subset}
\left\{\Z_{n+kr}(x,W)> 2^k N(n),x\in X_k \right\} \cap \Omega_l\subset \left\{\Z_{n+kr}(x,\HH)> 2^k N(n),x\in J_l \right\} \cap \Omega_l,
\end{equation}
and the set
\begin{equation}\label{eq:meas}
\left\{\forall x\in J_l:\ \Z_{n+kr}(x,\HH)\geq 2^kN(n) \right\}
\end{equation}
is measurable.
\end{fact}

\begin{proof}[Proof of Fact \ref{fact:meas}]
Using the definition of
$W=T(\eps+\eta)$ and $\HH=T(\eps)$, if for some $x'$ and $\omega$ one has
$\Z_{n+kr}(x',W)> 2^k N(n)$, then for the same $\omega$ and for any $x$ such that
$|x-x'|<\eta a^{n+kr}$ and larger set $\HH$ we also have $\Z_{n+kr}(x,\HH)> 2^k N(n)$.
Further, since $X_k$ is $\eta a^{2n+kr}$ dense, for any $x\in J_l$ we can find $x'\in X_k$ such that
$|x-x'|<\eta a^{2n+kr}<\eta a^{n+kr}$. This proves (\ref{eq:subset}).

It  remains to be proved that the set in (\ref{eq:meas}) is measurable which is formally not straightforward since $x$ is running over an interval $J_l$.

First, we note that it is enough to prove that for any fixed $x'\in X_k$ the set
\begin{equation*}
 \left\{\forall x\in \left[x'- \eta a^{n+kr},x'+ \eta a^{n+kr}\right]\cap J_l:\  \Z_{n+kr}(x,\HH)\geq 2^kN(n) \right\}
\end{equation*}
is measurable since $X_k$ is a finite set.

We have to take into consideration two facts. $\HH$ is a union of finite number of intervals and according to \eqref{y72}
$\Z_{n+kr}(x,\HH)$ is a sum of a finite number of indicator functions:
\begin{equation*}
 \Z_{n+kr}(x,\HH)=
 \#\mathfrak{D}_{n+k}(x,T)
\end{equation*}
Therefore, the function $\Z_{n+kr}(\cdot ,\HH)$ for any $\omega$ is a jump function on
$T$ with \fmu a finite number of jumps. Let $\{\iota_i:i\in \mathcal{I}\}$ denote the partition of $T$
into the intervals on which $\Z_{n+kr}(\cdot ,\HH)$ is constant. So, $\Z_{n+kr}(x ,\HH)$ depends
on the interval $\iota_i$ which $x$ falls into. Therefore,
\begin{multline*}
 \left\{\forall x\in \left[x'- \eta a^{n+kr},x'+ \eta a^{n+kr}\right]\cap J_l:\
 \Z_{n+kr}(x,\HH)\geq 2^kN(n) \right\}=
\\
\left\{\forall i\in \mathcal{I}\hbox{ such that } \iota_i\cap \left[x'- \eta a^{n+kr},x'+ \eta
a^{n+kr}\right]\cap J_l\neq\emptyset:\ \Z_{n+kr}(\iota_i,\HH)\geq 2^kN(n) \right\}.
\end{multline*}
The last set is given by a measurable function of a finite number of random variables
 hence measurable.
\end{proof}

As a consequence of Fact \ref{fact:meas} we can exchange $X_1$ with $J_l$ at a price of replacing the set $W$ with the larger set $T$ in \eqref{eq:11}.
In this way we  obtain
\begin{equation*}
 \pr \left( \Z_{n+r}(x,\HH)> 2 N(n),\forall  x\in J_l \mid \Omega_l\right)\geq
 1-L^{n+r}\tau^{N(n)}=1-a_1(n).
\end{equation*}
For $k=0$, using Lemma \ref{lem:34}, the definition of $\Omega_l$ and $W\subset \HH$, we
have
\begin{equation*}
\Z_n(x,\HH)\geq \Z_n(x,W)\geq N(n) \quad \hbox{ for all
}\omega\in\Omega_l.
\end{equation*}
Therefore, we have (\ref{eq:6}) for $M=1$:
\begin{multline*}
 \pr \left( \Z_{n+kr}(x,\HH)> 2^k N(n),0\leq k\leq 1,x\in J_l \mid \Omega_l\right)\geq
 1-L^{n+r}\tau^{N(n)}=
\\
1-a_1(n)>(1-a_1(n))(1-a_0(n)).
\end{multline*}

\bigskip

Now, assume that we have proved (\ref{eq:66}) for $M-1$. We will prove it for $M$.
 The simple fact that for any three events $A,B,C$ of positive probability we have:  $\mathbb{P}(A\cap B|C)=\mathbb{P}\left(A|B\cap C\right)\cdot
\mathbb{P}\left(B|C\right)$ yields
\begin{multline*}
  \pr\left(\Z_{n+kr}(x,\HH)>2^k\! \cdot N(n),\forall 0\leq k\leq M,\ \forall x\in J_l\mid
  \Omega_l \right)=
\\
\pr\left(\Z_{n+Mr}(x,\HH)>2^M\cdot N(n),\ \forall x\in J_l\mid \Z_{n+kr}(x,\HH)>2^k\! \cdot
N(n),\forall 0\leq k\leq M-1,\right.
\\
\left.
\forall x\in J_l, \; \Omega_l \right)\cdot
\\
\cdot\pr\left( \Z_{n+kr}(x,\HH)>2^k\! \cdot N(n),\forall 0\leq k\leq M-1,\ \forall x\in J_l\mid
\Omega_l \right)
\end{multline*}
By induction, it is known that the second term on the right hand side is larger than
$\prod_{k=0}^{M-1}(1-a_k(n))$.
Now we use a similar argument, to the one
 applied as in the case $M=1$, for proving that the first term, on the right hand side in the displayed formula above, is larger than
$1-a_M(n)$. Namely, let $\Omega_{l,M-1}$ denote the event in the condition of the first
term:
\begin{equation*}
 \Omega_{l,M-1}:=\left\{\Z_{n+kr}(x,\HH)>2^k\! \cdot N(n),\forall 0\leq k\leq M-1,\ \forall
 x\in J_l, \right\}\cap \Omega_l.
\end{equation*}
As in the proof of (\ref{eq:7}) we use Lemma \ref{lem:Cramer}.  Let $n'=n+(M-1)r$,
$C=\Z_{n+(M-1)r}(x,\HH)$, then for any $x\in J_l$ we have
\begin{equation*}
\pr \left(\Z_{n'+r}(x,W)\leq 2\cdot 2^{M-1} N(n)\mid \Omega_{l,M-1} \right)\leq \tau
^{2^{M-1}N(n)}.
\end{equation*}
This can be proved in exactly the same way as (\ref{eq:7}) was proved. The continuation is also
similar, we first take a dense set $X_M$ and prove the counterpart of (\ref{eq:11}), that is,
\begin{equation*}
 \pr \left( \Z_{n'+r}(x,W)> 2 \cdot 2^{M-1}N(n),x\in X_M \mid \Omega_{l,M-1}\right)\geq
 1-L^{n+Mr}\tau^{2^{M-1}N(n)}=1-a_M(n).
\end{equation*}
Applying Fact \ref{fact:meas} again yields
\begin{equation*}
 \pr\left(\Z_{n+Mr}(x,\HH)>2^M\cdot N(n),\ \forall x\in J_l\mid \Omega_{l,M-1}\right)\geq
 1-a_M(n)
\end{equation*}
using $\Z_{n+Mr}(x,\HH)=\Z_{n'+r}(x,\HH)$. This finishes the proof of Lemma \ref{lem:last}.
\end{proof}

Now, we are ready to present the proof of Theorem \ref{y79}\label{page:end_of_proof}.

\begin{proof}[Proof of Theorem \ref{y79} assuming the Main Lemma]
Using Lemma \ref{lem:last} we have
\begin{multline*}
 \pr \left(C_{\mathcal{H}}\hbox{ contains an interval }\mid \Omega_l\right)\geq \pr \left(
 C_{\mathcal{H}} \hbox{ contains }J_l\mid \Omega_l\right)\geq
\\
\pr\left(\Z_{n+Mr}(x,\HH)>2^M\! \cdot N(n),\forall M,\ \forall x\in J_l\mid \Omega_l
\right)>\prod_{k=0}^\infty (1-a_k(n))>1-2\sum_{k=0}^\infty a_k(n)
\end{multline*}
Getting rid of the condition, we obtain that
\begin{multline*}
 \pr \left(C_{\mathcal{H}} \hbox{ contains an interval }\right)=
\\
\sum_{l=1}^\infty   \pr \left(C_{\mathcal{H}} \hbox{ contains an interval }\mid \Omega_l\right)\pr
(\Omega_l)>
\\
\left(1-2\sum_{k=0}^\infty a_k(n) \right)\sum_{l=1}^L\pr (\Omega_l)=1-2\sum_{k=0}^\infty
a_k(n)>1-\xi,
\end{multline*}
  {where in the last step we used \eqref{a99}.}
Since $\xi$ can be chosen arbitrarily small this proves Theorem \ref{y79}.

\end{proof}

\section{Construction of the pre-typespace $T(0)$ and the  type space $T(\varepsilon )$ }\label{y81}
In this section we prove our Main Lemma \ref{z80}.

We consider the support of $m_I$:
$$
\mathrm{supp}(m_I)=\bigcup\limits _{i=1}^{L}
\left\{ (x,y): y\in \mathrm{supp }\  \Phi _{i}(x)
  \right\}.
$$
It is immediate that
\begin{equation}
\label{z79}
\mathrm{supp}(m_I)=
\bigcup\limits _{i=1}^{L}
\widehat{S}_i,
\end{equation}
where,
$$
\widehat{S}_i:=
\left\{ (x,y):
x\in \widehat{W}_i,\
y\in \mathrm{supp}\ \Phi _i(x)
\right\} \text{ for }
\widehat{W}_i:=(a\alpha +t_i-\theta_i,a\beta  +t_i+\theta_i).
$$
It is easy to see that for all $i\in[L]$, $\widehat{S}_i$ is a parallelogram with two horizontal sides: $\left\{(x,y): y=\alpha  \right\}$, $\left\{(x,y): y=\beta  \right\}$
and the two other sides are the following two lines of slope $1/a$
$$
\widehat{\ell }_i^2(x):=\frac{1}{a}x-\frac{t_i}{a}+\frac{1}{a}\theta_i,\quad
\widehat{\ell }_i^1(x):=\frac{1}{a}x-\frac{t_i}{a}-\frac{1}{a}\theta_i.
$$
That is

\begin{eqnarray}
\label{y71}
\widehat{S}_k &=&
\left\{ (x,y): x\in \widehat{W}_k,\
\max\left\{ \alpha ,\widehat{\ell }_k^1(x) \right\}< y < \min\left\{ \beta ,\widehat{\ell }_k^2(x) \right\}\right\} \\ \nonumber
&=& \left\{ (x,y): y\in (\alpha ,\beta ), ay+t_i-\theta _i<x<ay+t_i+\theta _i \right\}.
\end{eqnarray}

Clearly,
\begin{equation}
\label{z46}
\mathrm{width}(\widehat{S}_k)=2\theta_k\quad
\mathrm{height}(\widehat{S}_k)=\frac{2\theta_k}{a}.
\end{equation}

In general the open filled parallelograms $\widehat{S}_i$ are not disjoint.
Their union $\bigcup\limits _{i=1}^{L}\widehat{S}_i$ has say $M$ connected components
$\left\{ S_k \right\}_{k=1}^M$. By elementary geometry, for all $k\in[M]$ the connected component $S_k$ is also an open filled parallelogram having two horizontal sides and the Western non-horizontal sides is one of the lines from
$\left\{ \widehat{\ell }_i^2(x) \right\}_{i=1}^{L } $. Let us call it
$\ell _{k }^{2 }(x)$. While the Eastern non-horizontal side of $S_k$ is one of the lines from $\left\{ \widehat{\ell }_i^1(x) \right\}_{i=1}^{L } $. Let us call it
$\ell _{k }^{1 }(x)$.

\begin{figure}[ht!]
  \begin{center}
   \includegraphics[width=7cm]{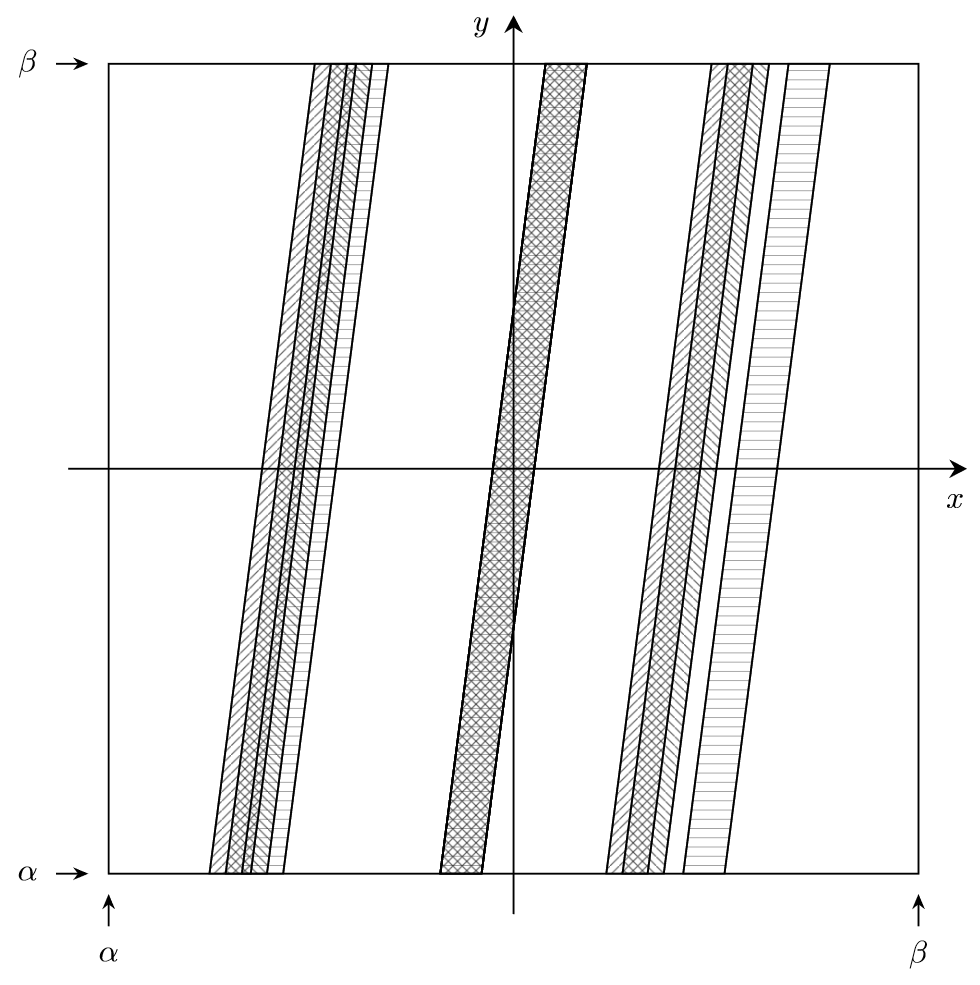}
  \includegraphics[width=7cm]{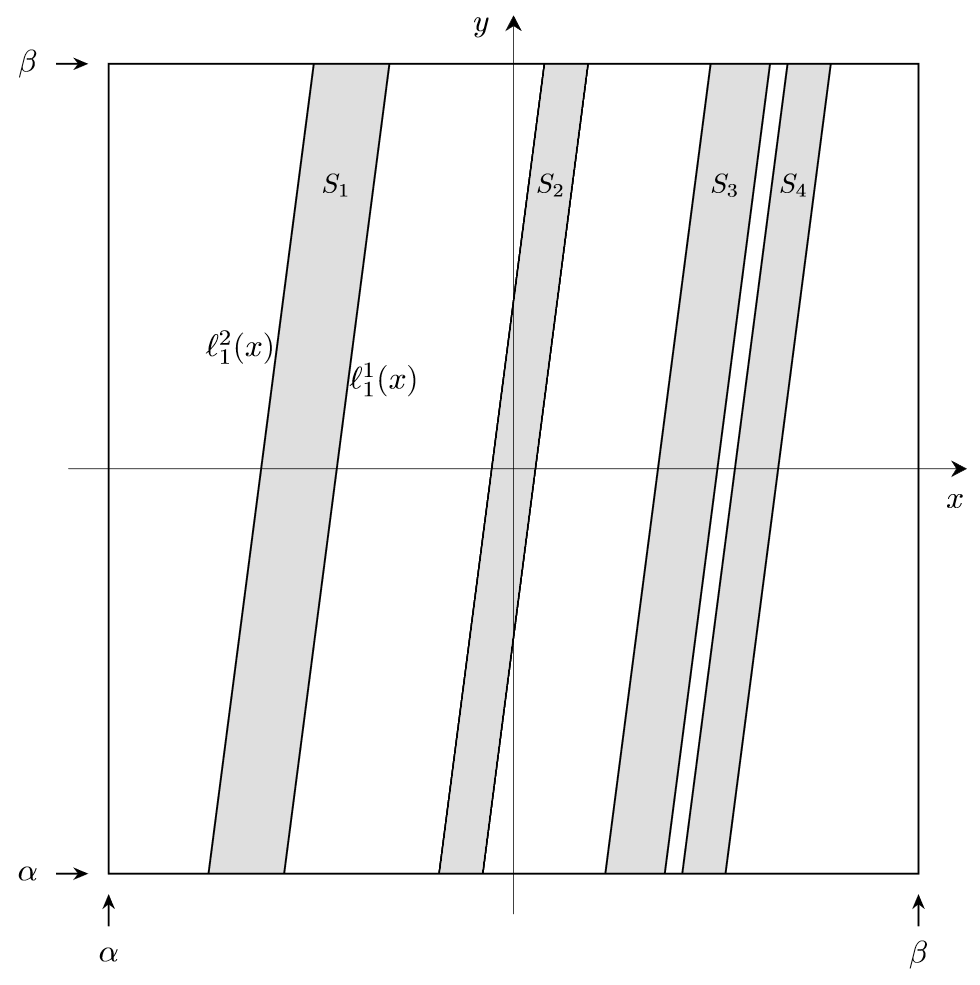}
  \end{center}
  \caption{Paralellograms $\widehat{S}_i$ on the left and paralellograms
  $S_k$ on the right.}\label{y92}
\end{figure}

Observe that the open filled parallelograms $S_k$ can be adjacent to each other. That is, it can happen that
for a $1\leq k\leq  M-1$, we have $\ell _{k}^{1 }(x)\equiv \ell _{k+1}^{2 }(x)$.

We introduce the orthogonal projections to the coordinate axes:
$$
\pi _1(x,y):=x \quad \text{ and } \quad \pi _2(x,y):=y.
$$
We have
\begin{equation}
\label{z78}
S_k=
\left\{
(x,y): x\in \pi _1(S_k),\quad
\max\left\{ \alpha ,
\ell_k^1(x) \right\}
< y <
\min\left\{
\beta ,
\ell _k^2(x)
 \right\}
 \right\}.
\end{equation}
The collection of parallelograms having two horizontal sides such that the slopes of the other two sides are equal to $1/a$ is denoted by $\mathfrak{P}$.
 Particular attention will be given to the  parallelograms of the form $P_{\langle u,v\rangle}^k\in \mathfrak{P}$, where $u\leq v$,  and
 $\langle a,b\rangle$ stands for an interval with endpoints $a\leq b$ about which we do not know if it is closed or open or half-closed and half-open.
\begin{equation}
\label{z77}
P_{\langle u,v\rangle}^k:=\left\{ (x,y)\in S_k: y\in\langle u,v\rangle  \right\}.
\end{equation}
When $u=v$ then $P_{[ u,v]}^k$ is the horizontal line segment $\left\{ (x,y)\in S_k: y=u=v \right\}$.
 Without loss of generality, we may assume  that $\bigcup\limits _{k=1}^{M}S_k$ is contained in the region between
the Western side of $S_1$ (which is determined by the graph of the function
$\ell  _{1}^{2}(x)$)
 and the Eastern side of $S_M$ (which is determined by the graph of the function $\ell _{M}^{1}(x)$).

We define $\alpha <\widetilde{\alpha } <\widetilde{\beta }<\beta $
by
\begin{equation}
\label{z76}
 \ell _{1}^{2 } (\widetilde{\alpha } )=\widetilde{\alpha },\quad
 \ell _{M}^{1 } (\widetilde{\beta } )=\widetilde{\beta }\quad \text{ and } \quad
 \widetilde{I}:=[\widetilde{\alpha },\widetilde{\beta }]\subset I.
\end{equation}
\begin{definition}\label{z68}
  Let $\mathfrak{A}$ be the collection of all finite unions of
  sub-intervals of $\widetilde{I}$, including all
  open, closed, half-open and half-closed, and even degenerated intervals.
  In particular  \fma there exists  a $q$ such that
$H\in \mathfrak{A}$, $H=\bigcup\limits_{j=1}^q \langle a_i,b_i \rangle $, where
$\widetilde{\alpha }\leq a_i \leq b_i\leq \widetilde{\beta }$ for all $i\in [q]$.
  We define
\begin{equation}
\label{z64}
U_{H}:= \bigcup\limits_{k\in[M]}\bigcup\limits_{i\in[q]}
P_{\langle a_i,b_i\rangle}^k
=
\bigcup\limits_{i\in[q]}
\left\{ (x,y): y\in \langle a_i,b_i\rangle \right\}
\bigcap
\bigcup\limits _{k=1}^{M}S_k.
\end{equation}
and
  \begin{equation}
\label{z67}
\Psi (H):=
\pi _1\left(
  U_H
 \right)=
\bigcup\limits_{k\in[M]}\bigcup\limits_{i\in[q]}
\pi _1 \left( P_{\langle a_i,b_i\rangle}^k \right).
\end{equation}
  \end{definition}
  \begin{claim} \label{z65}
   The mapping $\Psi $ \fm satisfies
   \begin{enumerate}
   [{\bf (a)}]
   \item Let $y\in \widetilde{I}$. Then
    \begin{equation}
   \label{y80extra}
   \Psi \left( \left\{ y \right\} \right)=
  \bigcup\limits_{i=1}^{L}
  \left( ay+t_i-\theta _i,ay+t_i+\theta _i  \right).
   \end{equation}
     \item \fm $\Psi$ is a self-mapping of $\mathfrak{A}$. That is,  $\Psi : \mathfrak{A}\to \mathfrak{A}$.
     \item \fm $\Psi$ is monotone in the sense that $A\subset B$ implies $\Psi (A)\subset \Psi (B)$.
     \item If $H\in \mathfrak{A}$ is open then so is $\Psi(H)$.
   \end{enumerate}
  \end{claim}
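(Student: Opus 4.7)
The plan is to unpack each part directly from the definitions \eqref{z64}, \eqref{z67} of $U_H$ and $\Psi(H)$, using convexity of the constituent parallelograms to control the shape of $\Psi(H)$ and the fixed-point equations \eqref{z76} defining $\widetilde{\alpha}, \widetilde{\beta}$ to control its location. Parts (a), (c), and (d) should be essentially immediate; the one genuine calculation is the containment $\Psi(H) \subset \widetilde{I}$ in (b).

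For (a), the set $U_{\{y\}}$ is the horizontal slice of $\bigcup_{k} S_k = \bigcup_{i=1}^L \widehat{S}_i$ at height $y$, and since $y \in \widetilde{I} \subset (\alpha, \beta)$, the second line of \eqref{y71} identifies this slice as $\bigcup_{i=1}^L \{(x,y) : ay + t_i - \theta_i < x < ay + t_i + \theta_i\}$; projecting to the first coordinate yields exactly \eqref{y80extra}. Part (c) is built into the construction: if $A \subset B$, then every interval component of $A$ sits in some interval component of $B$, so $P^k_{\langle a_i, b_i\rangle} \subset P^k_{\langle c_j, d_j\rangle}$ for the appropriate $j$ and every $k$; hence $U_A \subset U_B$, and taking the projection $\pi_1$ preserves this inclusion.

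For (b) I would split the claim into (i) $\Psi(H)$ is a finite union of intervals and (ii) $\Psi(H) \subset \widetilde{I}$. For (i), each $P^k_{\langle a_i, b_i\rangle}$ is the intersection of the open convex parallelogram $S_k$ with a convex horizontal strip (with inequalities chosen according to the type of $\langle a_i, b_i \rangle$), hence convex; its image under the affine map $\pi_1$ is a convex subset of $\mathbb{R}$, i.e., an interval (possibly empty or degenerate), so by \eqref{z64} $\Psi(H)$ is a union of at most $qM$ intervals. For (ii), which is the main obstacle, I would use that the Western boundary of $S_1$ at height $y$ sits at $x = ay + t_{i_W} - \theta_{i_W}$ for some index $i_W \in [L]$, and that the fixed-point relation $\ell_1^2(\widetilde{\alpha}) = \widetilde{\alpha}$ in \eqref{z76} rewrites as $a\widetilde{\alpha} + t_{i_W} - \theta_{i_W} = \widetilde{\alpha}$. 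Consequently, for every $y \in [\widetilde{\alpha}, \widetilde{\beta}]$, the Western boundary evaluates to $a(y-\widetilde{\alpha}) + \widetilde{\alpha} \ge \widetilde{\alpha}$ since $a > 0$; a symmetric calculation with $\ell_M^1$ and $\widetilde{\beta}$ bounds the Eastern boundary of $S_M$ from above by $\widetilde{\beta}$. Since by assumption $\bigcup_k S_k$ lies between the Western side of $S_1$ and the Eastern side of $S_M$, every horizontal slice at $y \in \widetilde{I}$ is contained in $[\widetilde{\alpha}, \widetilde{\beta}]$, giving $\Psi(H) \subset \widetilde{I}$.

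Finally, for (d), I would invoke that $\pi_1 : \mathbb{R}^2 \to \mathbb{R}$ is an open map: when every $\langle a_i, b_i\rangle = (a_i, b_i)$ is open, $P^k_{(a_i, b_i)} = S_k \cap \{a_i < y < b_i\}$ is the intersection of two open subsets of $\mathbb{R}^2$ and hence open; then $U_H$ is open and $\Psi(H) = \pi_1(U_H)$ is open in $\mathbb{R}$. Everything except (b)(ii) is essentially bookkeeping, and the introduction of $\widetilde{\alpha}$ and $\widetilde{\beta}$ as fixed points of the outermost boundary lines is precisely what makes (b)(ii) go through.
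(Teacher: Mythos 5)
Your proof is correct and follows essentially the same route the paper sketches: part (a) from the second description in \eqref{y71}, part (b) from convexity of the $P^k_{\langle a_i,b_i\rangle}$ plus the fixed-point relations \eqref{z76} for $\widetilde{\alpha},\widetilde{\beta}$, and parts (c) and (d) directly from the definitions. You have supplied the details, in particular the computation $ay+t_{i_W}-\theta_{i_W}=a(y-\widetilde{\alpha})+\widetilde{\alpha}\ge\widetilde{\alpha}$, which the paper leaves implicit behind the phrase ``follows from the definition of $\widetilde{\alpha}$ and $\widetilde{\beta}$.''
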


Namely, part (a) follows from the second part of  \eqref{y71}.
it is clear that for all $k\in[M]$ and $i\in[L]$,  $\pi _1 \left( P_{\langle a_i,b_i\rangle}^k \right)$ and so
$\Psi (H)$ consists of finitely many intervals.
The fact that all of these intervals are contained in $\widetilde{I}$
follows from the definition of $\widetilde{\alpha }$ and $\widetilde{\beta }$.
Part (b) and (c) are obvious from the definition.

Set
\begin{equation}
\label{z66}
V_m:=\Psi^{m} (\mathrm{int} (\widetilde{I}))\quad \text{ with }\quad
V_0:=\mathrm{int}( \widetilde{I}).
\end{equation}
  {It follows from}
 part (b) {and (c)} of Claim \ref{z65}, {that}  $V_{m+1}\subset  V_{m}$. Put
\begin{equation}
\label{z49}
N_0:=
\inf\left\{ m\in\mathbb{N}\cup\left\{ \infty   \right\}:
V_{m}\setminus V_{m+1}=\emptyset
\right\}.
\end{equation}
We will prove in Claim \ref{z15} that $N_0$ is finite.
\begin{definition}\label{z63}
  The pre-type space is defined by $T(0):=V_{N_0}$.
\end{definition}

\subsection{Elementary properties of the pre-type space $T(0)$}

Using that both $V_m\in \mathfrak{A}
$ and $\mathrm{int}(\widetilde{   I})\setminus V_m \in \mathfrak{A}$
for every $m$ we can find
 an $\widehat{n}_m$, $n_m$ and $\widetilde{\alpha }\leq \alpha _{i}^{(m) }<\beta  _{i}^{(m) }\leq\widetilde{\beta }$ and $\widetilde{\alpha }\leq
u_i^{(m)}\leq v_i^{(m)}
 \leq\widetilde{\beta }
 $ such that
$$
V_m= \bigcup\limits_{i\in [\widehat{n}_m]}
\left( \alpha _{i}^{(m) },\beta  _{i}^{(m) } \right) ,\quad
V_{m-1}\setminus V_m
=
\bigcup\limits_{i\in [n_m]}
\langle  u_i^{(m)}, v_i^{(m)}  \rangle
$$
and $\left\{\left( \alpha _{i}^{(m) },\beta  _{i}^{(m) } \right)  \right\}
_{i=1}^{\widehat{n}_m }$ and
$
\left\{
  \langle  u_i^{(m)}, v_i^{(m)}  \rangle
 \right\}_{i=1}^{n_m }
$
are the connected components of $V_m$ and
$V_{m-1}\setminus V_m$ respectively.
We say that the intervals in the first union are level $m$
green intervals and the intervals in the second union are the level $m$ red intervals.
See Figure \ref{y91}.
Their  collections  are denoted  by $\mathcal{G}_m$ and $\mathcal{R}_m$ respectively. That is
$$
\mathcal{G}_m:=\left\{
   \left(\alpha _i^{(m)}, \beta _i^{(m)}  \right)
 \right\}_{ i=1}^{\widehat{n}_m }
\text{ and }
\mathcal{R}_m:=\left\{ \langle  u_i^{(m)}, v_i^{(m)}
  \rangle \right\}_{ i=1}^{n_m }.
$$

\begin{figure}[ht!]
  \centering
  \includegraphics[width=10cm]{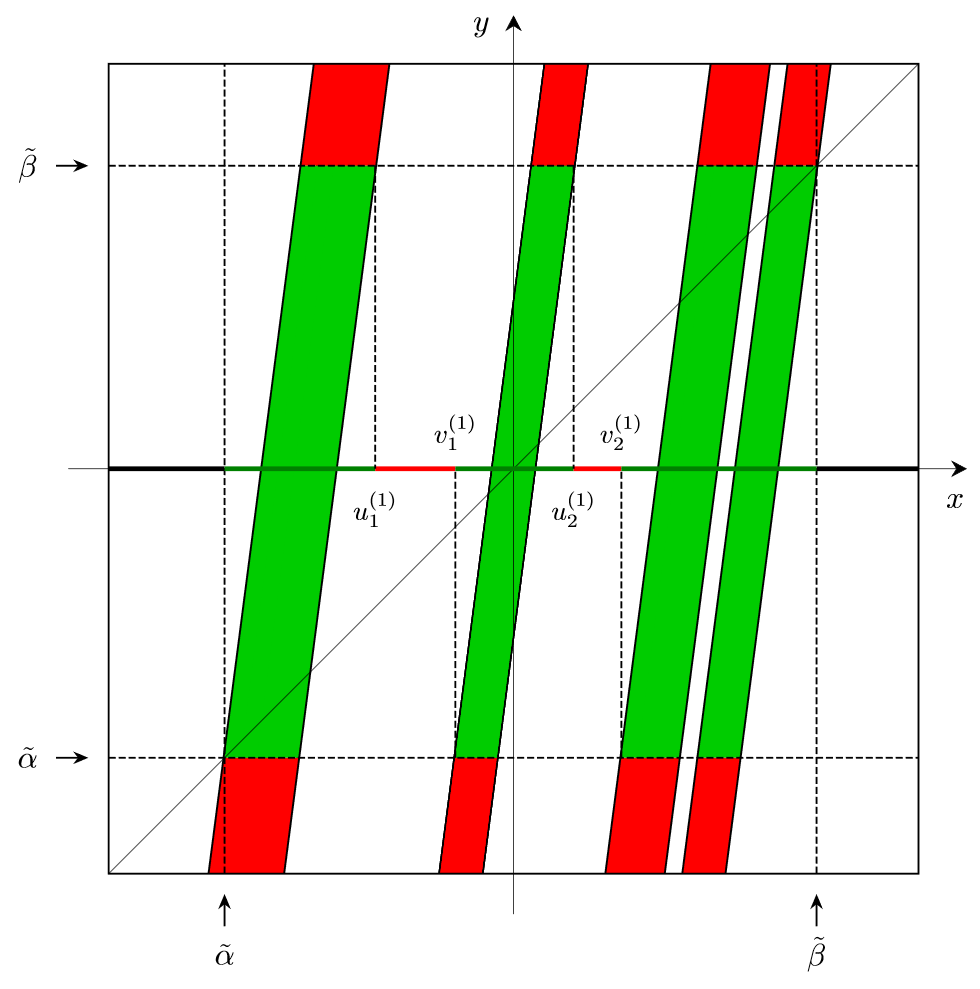}
  \caption{Level 1 red and green intervals.}\label{y91}
\end{figure}

For an $m\geq 1$,
 the level-$m$ green and red areas are:
\begin{equation}
\label{z57}
G_m:=U_{V_{m-1}}=
\bigcup\limits_{i\in [\widehat{n}_{m-1}],k\in [M]}
P _{ \left(  \alpha _{i}^{(m-1) },\beta  _{i}^{(m-1) } \right)}^{k },\
R_m:=U_{V_{m-1}\setminus  V_m}=
\bigcup\limits_{i\in [n_m],k\in [M]}
P _{ \langle  u_i^{(m)}, v_i^{(m)}  \rangle}^{ k}.
\end{equation}
Hence,
$$
\pi _1(G_m)=V_m,\quad
R_m\subset  G_m, \quad \text{and} \quad
G_{m+1}=G_m\setminus R_m.
$$
In words, we get level $m+1$ green area if we take away level $m$ red from level $m$ green area. This implies that the sets $\left\{ R_i \right\}_i$
are pairwise disjoint and
\begin{equation}
\label{z62}
G_{m+1}=G_1\setminus \bigsqcup\limits _{i=1}^{m } R_{\ell },
\end{equation}
where $\bigsqcup$ means disjoint union.
In words: we get the level-$m+1$ green area if we take aways from the level $1$ green area all the first $m$ level red areas.
That is
 \begin{equation}
\label{z73}
G_{m+1}=G_1\setminus \bigcup\limits _{\ell =1}^{m }
\bigcup\limits_{1\leq i\leq n_{\ell }, k\in[M]}
{P }_{\langle u_i^{(\ell )}, v_{i }^{(\ell ) } \rangle}^k.
\end{equation}
The following Lemma plays an important role in our proofs.
\begin{lemma}\label{z74}
 $\pi _1(G_{m-1})\setminus \pi _1(G_{m})$ is the union of disjoint closed (possibly degenerated) intervals
$\mathcal{R}_m:= \left\{ [u _{i}^{(m) },v _{i}^{(m) }] \right\}_{i=1}^{n_m } $.
\end{lemma}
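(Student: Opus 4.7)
The plan is to first reduce the statement to one about the sets $V_{m-1}, V_m$ themselves. By the definitions in \eqref{z57} and \eqref{z67}, $\pi_1(G_j) = \pi_1(U_{V_{j-1}}) = \Psi(V_{j-1}) = V_j$, so the lemma is equivalent to showing that $V_{m-1}\setminus V_m$ is a disjoint union of (possibly degenerate) closed intervals. Both $V_{m-1}$ and $V_m$ belong to $\mathfrak{A}$ and are open (by iterating parts (b) and (d) of Claim \ref{z65}), and $V_m \subset V_{m-1}$ as noted in the text after \eqref{z66}.

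The core of the argument will be the following \emph{key claim}: for every connected component $(\alpha,\beta)$ of $V_{m-1}$, the set $V_m \cap (\alpha,\beta)$ is non-empty, its leftmost component has left endpoint exactly $\alpha$, and its rightmost component has right endpoint exactly $\beta$. Once this is known, $(V_{m-1}\setminus V_m)\cap (\alpha,\beta)$ is a finite collection of closed ``gaps'' lying strictly inside $(\alpha,\beta)$ between consecutive open pieces of $V_m$; summing over the finitely many components of $V_{m-1}$ yields precisely the family $\{[u_i^{(m)},v_i^{(m)}]\}_{i=1}^{n_m}$, with the degenerate case $u_i^{(m)}=v_i^{(m)}$ accounting for two successive $V_m$-pieces that touch.

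I would prove the key claim by induction on $m$. The base case $m=1$ is immediate from the fixed-point conditions \eqref{z76}: $\pi_1(P^1_{(\widetilde\alpha,\widetilde\beta)})$ has left endpoint $a\widetilde\alpha+\tau_1-\sigma_1=\widetilde\alpha$, and $\pi_1(P^M_{(\widetilde\alpha,\widetilde\beta)})$ has right endpoint $a\widetilde\beta+\tau_M+\sigma_M=\widetilde\beta$, so $V_1$ touches both endpoints of $V_0=(\widetilde\alpha,\widetilde\beta)$. For the inductive step, let $(\alpha,\beta)$ be a component of $V_{m-1}=\Psi(V_{m-2})$. Since $V_{m-1}$ is a finite union of open intervals of the shape $\pi_1(P^{j}_{(\alpha',\beta')})=(a\alpha'+\tau_j-\sigma_j,\,a\beta'+\tau_j+\sigma_j)$, the left endpoint $\alpha$ must coincide with the left endpoint of at least one of them; hence $\alpha = a\alpha^*+\tau_{j^*}-\sigma_{j^*}$ for some index $j^*$ and some $V_{m-2}$-component $(\alpha^*,\beta^*)$. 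The inductive hypothesis applied to $(\alpha^*,\beta^*)$ produces a $V_{m-1}$-component $(\alpha^*,\gamma)$ with left endpoint exactly $\alpha^*$, and then $\pi_1(P^{j^*}_{(\alpha^*,\gamma)})=(\alpha,\,a\gamma+\tau_{j^*}+\sigma_{j^*})$ is a non-empty open subset of $V_m$ whose left endpoint equals $\alpha$. A symmetric argument using some identity $\beta=a\beta^{**}+\tau_{k^*}+\sigma_{k^*}$ handles the right endpoint.

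The main obstacle is ruling out the possibility that the open interval just produced is strictly contained in a larger $V_m$-component whose left endpoint lies to the left of $\alpha$. This is prevented by the inclusion $V_m\subset V_{m-1}$: since $\alpha$ is on the boundary of the open set $V_{m-1}$, no point of $V_m$ can lie in a sufficiently small left neighborhood of $\alpha$. Therefore the $V_m$-component containing $(\alpha,\,a\gamma+\tau_{j^*}+\sigma_{j^*})$ has left endpoint exactly $\alpha$ and lies entirely in $(\alpha,\beta)$, so it is the leftmost component of $V_m\cap(\alpha,\beta)$. This completes the induction and hence the lemma.
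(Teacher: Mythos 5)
Your proof is correct, but it takes a genuinely different route through the inductive step than the paper. Both arguments reduce the lemma to the assertion that the endpoints of the components of $V_{m-1}$ persist as endpoints of components of $V_m$ (equivalently, that every removed red interval lies strictly inside the green component containing it). The paper establishes this by taking a point $x\in V_m\setminus V_{m+1}$, locating it inside the projection of a red parallelogram, and then using the $\varepsilon$-collars \eqref{z54} together with the parallelogram geometry to show in \eqref{z52} that $x$ sits at distance at least $\theta_{\min}$ from the boundary of the enclosing projected green parallelogram, so $x$ cannot be an endpoint of $\pi_1(G_m)$. Your argument instead traces a left endpoint $\alpha$ of a $V_{m-1}$-component backwards to a left endpoint $\alpha^*$ of a $V_{m-2}$-component along the western edge of some $S_{k^*}$, applies the induction hypothesis at level $m-1$ to produce a $V_{m-1}$-component touching $\alpha^*$, and pushes it forward along the same edge to exhibit a $V_m$-component whose left endpoint is exactly $\alpha$. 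Your version is purely order-theoretic and needs no quantitative distance estimate; the paper's metric argument has the compensating advantage of yielding the constant $\theta_{\min}$ recorded in Remark~\ref{z48}, which is subsequently the engine of Claim~\ref{z15} (finiteness of $N_0$). So the paper's choice of proof is partly dictated by what it wants to reuse.

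One small inaccuracy in your write-up: in the ``main obstacle'' paragraph you assert that, because $\alpha\in\partial V_{m-1}$, ``no point of $V_m$ can lie in a sufficiently small left neighborhood of $\alpha$''. That is not true as stated: the open set $V_{m-1}$ can have two adjacent components $(\gamma,\alpha)$ and $(\alpha,\beta)$, and then (by your own key claim applied to $(\gamma,\alpha)$) $V_m$ does accumulate at $\alpha$ from the left. What you actually need, and what does save the argument, is only that $\alpha\notin V_{m-1}\supset V_m$: the $V_m$-component containing $\pi_1\bigl(P^{k^*}_{(\alpha^*,\gamma)}\bigr)$ is a connected subset of $V_m$ that cannot contain $\alpha$, so its infimum is $\ge\alpha$, and since it contains points arbitrarily close to $\alpha$ from the right, its left endpoint is exactly $\alpha$; the same connectedness argument confines it to the component $(\alpha,\beta)$ of $V_{m-1}$. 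With that correction the inductive step is sound.
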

This follows from elementary geometry by mathematical induction.
\begin{proof}[Proof of the Lemma]
As we have mentioned,  it is immediate from the construction that for every $\ell $
the open set $\pi _1(G_{\ell })\subset (\widetilde{\alpha },\widetilde{\beta })$ has finitely many components.
It follows from \eqref{z76}
that if $\pi _1(G_1)\setminus \pi _1(G_2) $ is not empty then it is the disjoint union  of finitely many
compact intervals. Assume that the same holds for an $m\geq 2$
for the non-empty $\pi _1(G_{m-1})\setminus \pi _1(G_{m})$.
That is
\begin{equation}
\label{z56}
\pi _1(G_{m-1})\setminus \pi _1(G_{m})
=
V_{m-1}\setminus V_m
=
\bigcup\limits_{i\in [n_m]}
\left[ u_i^{(m)}, v_i^{(m)} \right].
\end{equation}
Clearly, this is the case if and only if all
 endpoints of $\pi _1(G_{m-1})$
are also endpoints of $\pi _1(G_{m})$. That is our induction hypotheses yields that for all $i\leq n_m$ and $j\leq \widehat{n}_m$
\begin{equation}
\label{z69}
  \left( u _{i}^{(m) },v _{i}^{(m) } \right)
\subset
\left( \alpha _{j}^{(m-1) }, \beta _{j}^{(m-1) } \right) \Longrightarrow
\left[ u _{i}^{(m) },v _{i}^{(m) } \right]
\subset
\left( \alpha _{j}^{(m-1) }, \beta _{j}^{(m-1) } \right).
\end{equation}
Using this, we  prove that whenever
$\pi _1(G_{m})\setminus \pi _1(G_{m+1})$ is non-empty then it is the union of finitely many disjoint closed intervals.
To verify this we assume that
\begin{equation}
\label{z55}
x\in \pi _1(G_{m})\setminus \pi _1(G_{m+1})\subset \pi _1\left( G_m\setminus G_{m+1} \right)=\pi _1(R_m).
\end{equation}
Then by \eqref{z57} there is a $k\in [M]$ and $i\in [n_m]$, $j\in[\widehat{n}_m]$
and
$\left[ u _{i}^{(m) },v _{i}^{(m) } \right]\in\mathcal{R}_m$,
$\left(  \alpha _{j}^{(m-1) },\beta  _{j}^{(m-1) } \right)\in
\mathcal{G}_{m-1}
$
such that by
\eqref{z55} and
the induction hypothesis we have
\begin{equation}
\label{z53}
x\in
\pi _1\left( P _{ \left[ u _{i}^{(m) },v _{i}^{(m) } \right]  }^{k } \right)
\subset
\pi _1 \left( P _{ \left(  \alpha _{j}^{(m-1) },\beta  _{j}^{(m-1) } \right)}^{k } {\setminus \mathcal{G}_{m+1}}\right).
\end{equation}
Using that $\left[ u _{i}^{(m) },v _{i}^{(m) } \right]\subset \left( \alpha _{j}^{(m-1) }, \beta _{j}^{(m-1) } \right) \subset  V_{m-1} $ is a connected component of $V_{m-1}\setminus V_m$, there exists an $\varepsilon >0$
such that
\begin{equation}
\label{z54}
\left( u _{i}^{(m) }-\varepsilon ,u _{i}^{(m) } \right)\subset V_m
\cap
\left( \alpha _{j}^{(m-1) }, \beta _{j}^{(m-1) } \right)
,
\left( v_{i}^{(m)}, v_{i}^{(m)}+\varepsilon  \right)\subset V_m
\cap \left( \alpha _{j}^{(m-1) }, \beta _{j}^{(m-1) } \right).
\end{equation}
Putting together this, \eqref{z53}, \eqref{z55} and \eqref{z57} we obtain that
\begin{equation}
\label{z52}
x\in
\pi _1\left(
P _{\left[ u _{i}^{(m) },v _{i}^{(m) } \right]  }^{ k}\right)
\setminus
\left(
  \pi _1\left(  P _{ \left( u _{i}^{(m) }-\varepsilon ,u _{i}^{(m) } \right) }^{ k}\right)
\cup
\pi _1\left(
P _{\left( v_{i}^{(m)}, v_{i}^{(m)}+\varepsilon  \right) }^{ k}
 \right)\right)
\end{equation}
Then by simple elementary geometry this implies that the distance between $x$ and the boundary of $\pi _1 \left(P _{\alpha  _{j}^{(m-1)},\beta  _{j}^{(m-1) } }^{k } \right)$
is at least $\theta_{\min}$. Having a look at formula
\eqref{z57} we can see that $x$ cannot be an endpoint of
a component of $\pi _1(G_m)$. Hence,  any \fmu endpoint of any component of $\pi _1(G_m)$ is also an endpoint of a certain component of $\pi _1(G_{m+1})$.
\end{proof}
Actually we proved a little more.
\begin{remark}\label{z48}
  Note that as a by-product of the proof above, we obtain that the following assertion holds:

  If
$x\in \pi _1(G_{m})\setminus \pi _1(G_{m+1})$ then there exist some
$\left[ u _{i}^{(m) },v _{i}^{(m) } \right]\in\mathcal{R}_m$ and $k\in[M]$
such that
$x\in\pi _1\left( P _{ \left[ u _{i}^{(m) },v _{i}^{(m) } \right]  }^{k }\right)$. In this case the distance between $x$ and the boundary of
$\pi _1\left( P _{ \left[ u _{i}^{(m) },v _{i}^{(m) } \right]  }^{k }\right)$
is at least
$$
\theta_{\min}:=\min_{i\in [L]}\theta_i.
$$
\end{remark}
\begin{claim}\label{z51}
    The intervals in the collection
  $$
\mathcal{R}:=\left\{ \left[ u _{i}^{(m) },v _{i}^{(m) } \right] \right\}_{m\in[N_0],i\in[n_m]}
  $$
  are pairwise disjoint.
\end{claim}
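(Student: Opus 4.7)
The plan is to dispose of this by a short case analysis based on the level of the red intervals. I would separate the collection $\mathcal{R}$ into \emph{pairs from the same level} and \emph{pairs from different levels}, and show disjointness in each case using nothing more than the nesting $V_0 \supset V_1 \supset V_2 \supset \cdots$ and what Lemma \ref{z74} has already proved about the structure of $V_{m-1}\setminus V_m$.

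For the same-level case, fix $m$ and observe that by Lemma \ref{z74} the set $V_{m-1}\setminus V_m$ is the disjoint union of the closed intervals $[u_i^{(m)},v_i^{(m)}]$, $i\in [n_m]$, which are its connected components. Connected components of any subset of $\mathbb{R}$ are pairwise disjoint, so intervals at the same level are disjoint.

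For the different-levels case, let $m<m'$ and consider a level-$m$ red interval $[u_i^{(m)},v_i^{(m)}]$ together with a level-$m'$ red interval $[u_j^{(m')},v_j^{(m')}]$. By definition the first lies in $V_{m-1}\setminus V_m$, so every one of its points, including the endpoints, is disjoint from the open set $V_m$. On the other hand, the second interval lies in $V_{m'-1}\setminus V_{m'}\subset V_{m'-1}\subset V_m$ (the last inclusion using $m'-1\ge m$ and the monotonicity of the sequence $(V_k)$). Hence every point of the level-$m'$ red interval lies in $V_m$ while every point of the level-$m$ red interval lies outside $V_m$, giving disjointness.

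The only potentially subtle point, and the one I would verify carefully, is that the closed interval $[u_i^{(m)},v_i^{(m)}]$ is truly contained in $V_{m-1}\setminus V_m$ (endpoints included), rather than only the open version. This is precisely what is guaranteed by the induction step of Lemma \ref{z74}, namely the implication \eqref{z69}, which says that the endpoints $u_i^{(m)},v_i^{(m)}$ lie strictly inside the parent green component and therefore in $V_{m-1}$, so the closed interval belongs to $V_{m-1}\setminus V_m$. Everything else in the argument is automatic from the monotonicity of $(V_k)$ and the definitions, so I expect no further technical obstacle.
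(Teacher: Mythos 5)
Your argument is correct and essentially the same as the paper's: same-level intervals are disjoint because they are distinct connected components of $V_{m-1}\setminus V_m$ (Lemma \ref{z74}), and for $m<m'$ the level-$m$ interval avoids $V_m$ while the level-$m'$ interval sits inside $V_{m'-1}\subset V_m$ by monotonicity. Your final worry is already handled by Lemma \ref{z74} itself, since the $[u_i^{(m)},v_i^{(m)}]$ are by definition the (closed) connected components of $V_{m-1}\setminus V_m$, so no extra appeal to \eqref{z69} is needed.
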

\begin{proof}
Consider the distinct intervals $\left[ u _{i}^{(m) },v _{i}^{(m) } \right],
 \left[ u _{j}^{(\widetilde{m}) },v _{j}^{(\widetilde{m}) } \right]\in\mathcal{R}$. If $m=\widetilde{m}$ then $\left[ u _{i}^{(m) },v _{i}^{(m) } \right]\cap
 \left[ u _{j}^{(\widetilde{m}) },v _{j}^{(\widetilde{m}) } \right]=\emptyset  $
follows from the fact that these are connected components of
$V_{m-1}\setminus V_m$. If $m\leq\widetilde{m}-1$
then $\left[ u _{i}^{(m) },v _{i}^{(m) } \right]\cap V_m=\emptyset $ but
$\left[ u _{j}^{(\widetilde{m}) },v _{j}^{(\widetilde{m}) } \right]\subset V_m$. So, $\left[ u _{i}^{(m) },v _{i}^{(m) } \right]\cap
\left[ u _{j}^{(\widetilde{m}) },v _{j}^{(\widetilde{m}) } \right]=\emptyset  $
also {holds} in this case.
\end{proof}

\begin{claim}\label{z15}
The number $N_0  $, defined in \eqref{z49}, is finite.
\end{claim}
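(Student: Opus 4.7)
I will prove $N_0<\infty$ by a two-dimensional area accounting. By \eqref{z62} the sets $R_1,R_2,\dots$ are pairwise disjoint subsets of $G_1\subset\bigcup_{k=1}^M S_k$, which has finite two-dimensional Lebesgue measure, so it is enough to find a universal constant $c>0$ such that the two-dimensional area of $R_m$ is at least $c$ whenever $V_{m-1}\setminus V_m\ne\emptyset$. The disjointness then gives $N_0\cdot c\le \mathrm{area}(G_1)<\infty$.

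For such a lower bound I would combine Lemma \ref{z74} with Remark \ref{z48}. Lemma \ref{z74} decomposes $V_{m-1}\setminus V_m$ into disjoint closed intervals $[u_i^{(m)},v_i^{(m)}]$, so $R_m=\bigcup_{i,k}P^k_{[u_i^{(m)},v_i^{(m)}]}$. Picking any $x\in V_m\setminus V_{m+1}$, Remark \ref{z48} gives some $P^k_{[u_i^{(m)},v_i^{(m)}]}$ whose horizontal projection contains $x$ at distance at least $\theta_{\min}$ from its boundary; in particular, that parallelogram has horizontal width at least $2\theta_{\min}$. Together with a uniform lower bound on the vertical extent $v_i^{(m)}-u_i^{(m)}$, this yields a positive universal lower bound on $\mathrm{area}(P^k_{[u_i^{(m)},v_i^{(m)}]})\le \mathrm{area}(R_m)$, as required.

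The main obstacle I anticipate is the degenerate case $u_i^{(m)}=v_i^{(m)}$, in which $P^k_{[u_i^{(m)},v_i^{(m)}]}$ collapses to a horizontal segment of zero two-dimensional measure and the naive area bound fails. One must show that such ``isolated split'' red components can occur only at finitely many indices $m$. The natural way to do this is to track the endpoints of the $V_m$: any new endpoint of $V_m$ arises as the image of an endpoint of $V_{m-1}$ under one of the $2M$ affine boundary maps of the parallelograms $S_k$, each a contraction of ratio $a<1$ on the $x$-axis. A careful case analysis shows that these degenerate split events can occur only finitely often before the endpoint configuration of $V_m$ stabilises, at which point only non-degenerate reductions remain and the area estimate of the previous paragraph applies. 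Combining the area bound at the non-degenerate levels with this finite count of degenerate levels yields $N_0<\infty$.
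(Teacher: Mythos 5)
Your basic framework — the $R_m$ are pairwise disjoint subsets of $G_1$, which has finite area, so a uniform positive lower bound on $\mathrm{area}(R_m)$ (for $m$ with $V_m\setminus V_{m+1}\ne\emptyset$) would finish it — is sound and genuinely different from the paper's argument. The paper instead tracks the \emph{maximal length} of a red interval: its key inequality \eqref{z47} shows that the maximal length of a level-$(m+1)$ red interval is at most $a\cdot(\text{max level-}m\text{ length})-\theta_{\min}$, so iterating $\widetilde g(x)=ax-\theta_{\min}$ from $\widetilde\beta-\widetilde\alpha$ must eventually go negative, which bounds $N_0$. So the two approaches are one-dimensional (lengths with a subtracted gap) versus two-dimensional (area packing).

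The gap in your proposal is the ``uniform lower bound on the vertical extent $v_i^{(m)}-u_i^{(m)}$'': you treat the degenerate case $u_i^{(m)}=v_i^{(m)}$ as the sole obstruction and plan a combinatorial argument counting degenerate split events. This misidentifies the difficulty. Even with $u_i^{(m)}<v_i^{(m)}$, the vertical extents of the red intervals can in principle shrink to zero through the levels, so excluding exact degeneracy is not enough, and the endpoint-tracking plan as sketched does not rule out such shrinkage. The piece you are missing is that the very existence of a point $x\in V_m\setminus V_{m+1}$ lying at distance $\ge\theta_{\min}$ (in fact $\ge 2\theta_k$) from the boundary of $\pi_1\bigl(P^k_{[u_i^{(m)},v_i^{(m)}]}\bigr)$, whose length is $a\bigl(v_i^{(m)}-u_i^{(m)}\bigr)+2\theta_k$, already forces $a\bigl(v_i^{(m)}-u_i^{(m)}\bigr)\ge 2\theta_k\ge 2\theta_{\min}$, i.e.\ $v_i^{(m)}-u_i^{(m)}\ge 2\theta_{\min}/a$. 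This is exactly the content of \eqref{z47} (written as $v'-u'\ge 0$), and it simultaneously handles degenerate and near-degenerate red intervals: a red interval that is too short, in particular a degenerate one, simply cannot cover any point of $V_m\setminus V_{m+1}$ and hence produces no descendants. With this observation your area argument closes with $c=4\theta_{\min}^2/a$ (modulo clipping at the horizontal sides of $S_k$); but note that once you have the inequality, the paper's one-dimensional iteration of $\widetilde g$ already yields the conclusion, and the area step becomes an unnecessary detour.
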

\begin{proof}
Assume that $m\geq 2$ and $x\in V_m\setminus V_{m+1}$. Then we can choose
$\left[ u _{i}^{(m) },v _{i}^{(m) } \right]\in\mathcal{R}_m$ and $k\in[M]$,
$x\in\pi _1\left( P _{ \left[ u _{i}^{(m) },v _{i}^{(m) } \right]  }^{k }\right)$. Let
$$
v':=(\ell  _{k}^{2 })^{-1}(v^{(m)}_i) \text{ and }
u':=(\ell _{k}^{1})^{-1}(u_i^{(m)}).
$$
By definition, we can choose a $j\in[n_{m+1}]$ such that
$x\in \left[ u _{j}^{(m+1) },v _{j}^{(m+1) } \right]$. It follows from
Remark \ref{z48} and elementary geometry that
\begin{equation}
\label{z47}
v _{j}^{(m+1) }- u _{j}^{(m+1) } \leq v'-u' \leq
a\left( v _{i}^{(m) }-u _{i}^{(m) } \right)-\theta_{\min}.
\end{equation}
Let $\widetilde{g}(x):= ax-\theta_{\min}$. Then the length of the maximal interval in $\mathcal{R}_m$ is at most
$\widetilde{g}^m(\widetilde{\beta }-\widetilde{\alpha })$, where $\widetilde{g}^m$ is the $m$  fold iterate of $\widetilde{g}$.
However, $\widetilde{g}^m(\widetilde{\beta }-\widetilde{\alpha })<0$  if $m$ \fma is large enough.
The largest $m$ for which $\widetilde{g}^m(\widetilde{\beta }-\widetilde{\alpha })\geq 0$ is an upper bound on $N_0$.
\end{proof}

Recall that we defined $T(0)$ as $V_{N_0}$. It is clear that
$T(0)\ne \emptyset $ because by the construction $V_m\ne \emptyset $ for all $m$.

\begin{definition}\label{z45}
\begin{enumerate}
[{\bf (a)}]
\item The minimal width and height of the stripes $\left\{ S_k \right\}_{k=1}^{M }$ are denoted by $w$ and $h$. Clearly, $w\geq 2\theta_{\min}$ and $h\geq \frac{2\theta_{\min}}{a}$.
\item  To shorten the notation we write $\tau:=\widehat{n}_{N_0}$, and instead of
$
\left\{ (\alpha _{i}^{ (N_0)},\beta _{i}^{(N_0) }) \right\} _{i=1 }^{\widehat{n}_{N_0} }
$
we write $\left\{ (\alpha _i,\beta _i) \right\}_{i=1}^{\tau }$ for the connected components of $T(0)=V_{N_0}$.
\item The intervals $\left\{ (a_{i,k},b_{i,k}) \right\}_{k\in[M], i
\in[\tau]}$ are defined as follows:
$$(a_{i,k},b_{i,k}):=\pi _1\left( P _{ (\alpha _i,\beta _i)}^{ k} \right).$$
\item Set $\widehat{d}:=\min \left\{ |x-y|:
x\ne y,
x,y\in\bigcup\limits_{k\in[M],i\in[\tau]} \left\{ a_{i,k},b_{i,k} \right\}
\right\}$.
\end{enumerate}
\end{definition}
\begin{claim}\label{z44}
  If $x\in T(0)\cap \bigcup\limits_{k\in[M],i\in[\tau]} \left\{ a_{i,k},b_{i,k} \right\}$ then
  there is an $i(x)\in[\tau]$ and $k(x)\in[M]$ such that
\begin{equation}
\label{z43}
\widehat{d}\leq
x-a_{i(x),k(x)}  \text{ and }
\widehat{d}\leq
b_{i(x),k(x)}-x.
\end{equation}

\end{claim}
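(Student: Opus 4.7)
The plan is to exploit the fact that $T(0) = V_{N_0}$ is a fixed point of the self-map $\Psi$, so that $T(0)$ is itself a union of the open intervals $(a_{i,k}, b_{i,k})$, and then invoke the definition of $\widehat{d}$ as a minimum separation.

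First, I would observe that by the definition of $N_0$ in \eqref{z49} one has $V_{N_0}\setminus V_{N_0+1}=\emptyset$, i.e., $V_{N_0}\subset V_{N_0+1}$. Combined with the reverse inclusion $V_{N_0+1}=\Psi(V_{N_0})\subset \Psi(V_{N_0-1})=V_{N_0}$ which follows from the monotonicity statement in Claim \ref{z65}(c), we conclude $\Psi(T(0))=\Psi(V_{N_0})=V_{N_0}=T(0)$. By \eqref{z67} applied to $H=T(0)=\bigcup_{i\in[\tau]}(\alpha_i,\beta_i)$, and by Definition \ref{z45}(c),
\[
T(0) \;=\; \Psi(T(0)) \;=\; \bigcup_{k\in[M]}\bigcup_{i\in[\tau]}\pi_1\bigl(P^{k}_{(\alpha_i,\beta_i)}\bigr) \;=\; \bigcup_{k\in[M]}\bigcup_{i\in[\tau]}(a_{i,k},b_{i,k}).
\]

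Next, suppose $x\in T(0)\cap\bigcup_{k\in[M],i\in[\tau]}\{a_{i,k},b_{i,k}\}$. Since $T(0)$ is a union of open intervals, $x$ lies in the interior of $T(0)$, and by the display above there exist indices $i(x)\in[\tau]$ and $k(x)\in[M]$ such that $x\in(a_{i(x),k(x)},b_{i(x),k(x)})$; that is,
\[
a_{i(x),k(x)} \;<\; x \;<\; b_{i(x),k(x)}.
\]
In particular, $x$ is distinct (as a real number) from both $a_{i(x),k(x)}$ and $b_{i(x),k(x)}$, while all three of these points belong to the finite set $\bigcup_{k\in[M],i\in[\tau]}\{a_{i,k},b_{i,k}\}$. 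By the very definition of $\widehat{d}$ in Definition \ref{z45}(d), the distance between any two distinct members of this set is at least $\widehat{d}$, hence
\[
x-a_{i(x),k(x)} \;\ge\; \widehat{d} \qquad\text{and}\qquad b_{i(x),k(x)}-x \;\ge\; \widehat{d},
\]
which is \eqref{z43}.

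The only non-routine step is the first one, namely verifying that $T(0)$ coincides with $\bigcup_{k,i}(a_{i,k},b_{i,k})$; everything else is a direct consequence of the definition of $\widehat{d}$. This equality is a consequence of the minimality built into the choice of $N_0$ together with the monotonicity of $\Psi$ established in Claim \ref{z65}, so no new geometric input is required.
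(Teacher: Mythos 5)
Your proof is correct and follows essentially the same path as the paper's: both proofs rest on the identity $T(0)=\Psi(T(0))=\bigcup_{k,i}(a_{i,k},b_{i,k})$, deduce that $x$ lies strictly inside some $(a_{i(x),k(x)},b_{i(x),k(x)})$, and then invoke the definition of $\widehat{d}$. You merely spell out, a bit more fully than the paper, why $V_{N_0}=V_{N_0+1}$ (minimality of $N_0$ together with the monotonicity $V_{m+1}\subset V_m$) and why $x$ being in a union of open intervals while also in the finite endpoint set forces $x$ to be an interior point of one of them.
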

\begin{proof}
  Without loss of generality we may assume that
$x=a_{\widehat{i},\widehat{k}}$.
  We know that
$T(0)=V_{N_0}=V_{N_0+1}$. Hence,
\begin{equation}
\label{y66}
x\in T(0)=\Psi(T(0)) =
\bigcup\limits_{k\in[M],i\in[\tau]}
\pi _1\left(
  P _{(\alpha _i,\beta _i)}^{k }
 \right)
 =
 \bigcup\limits_{k\in[M],i\in[\tau]}
 (a_{i,k},b_{i,k}).
\end{equation}
So, there is an $i(x)\in[\tau]$ and a $k(x)\in[M]$ such that
$x\in (a_{i(x),k(x)},b_{i(x),k(x)})$. Then by the definition of $\widehat{d}$,
\eqref{z43} holds.
\end{proof}

\begin{fact}\label{y70}
  \begin{equation}
  \label{y69}
  y\in T(0) \Longrightarrow
  \bigcup\limits_{i=1}^{L}
  (ay+t_i-\theta _i,ay+t_i+\theta _i)\subset T(0).
  \end{equation}
  Consequently,
\begin{equation}
\label{y67}
H_{\mathbf{i}}(T(0))\subset T(0),\quad
\forall n\geq 1 \text{ and }
\mathbf{i}\in\mathcal{L}_{n}.
\end{equation}
\end{fact}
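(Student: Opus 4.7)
The plan is to reduce the inclusion $\bigcup_{i=1}^{L}(ay+t_i-\theta_i,ay+t_i+\theta_i) \subset T(0)$ to the fact that $T(0)$ is a fixed point of $\Psi$, and then derive the consequence about $H_{\mathbf{i}}$ from the definition of the random maps together with Assumption \textbf{A2}.

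First I would establish the fixed-point identity $\Psi(T(0))=T(0)$. Recall $T(0)=V_{N_0}$, and it was noted right after \eqref{z66} that $V_{m+1}\subset V_m$ for every $m$, which follows from monotonicity of $\Psi$ (Claim \ref{z65}(c)) applied inductively to $V_1\subset V_0$. On the other hand, the definition \eqref{z49} of $N_0$ (now known to be finite by Claim \ref{z15}) says $V_{N_0}\setminus V_{N_0+1}=\emptyset$, i.e.\ $V_{N_0}\subset V_{N_0+1}$. Combining these two inclusions yields $V_{N_0}=V_{N_0+1}=\Psi(V_{N_0})$, so $\Psi(T(0))=T(0)$.

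Next, for any $y\in T(0)\subset\widetilde I$, Claim \ref{z65}(a) (equation \eqref{y80extra}) gives
\[
\bigcup_{i=1}^{L}(ay+t_i-\theta_i,\,ay+t_i+\theta_i)\;=\;\Psi(\{y\}).
\]
Since $\{y\}\subset T(0)$, monotonicity of $\Psi$ (Claim \ref{z65}(c)) together with the fixed-point identity above yields $\Psi(\{y\})\subset \Psi(T(0))=T(0)$, which is exactly \eqref{y69}.

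For the consequence \eqref{y67}, I would first handle $n=1$. By \textbf{A2} and \eqref{z97}, $D_i\stackrel{d}{=}t_i+Y_i$ with $\mathrm{supp}(Y_i)=(-\theta_i,\theta_i)$, so almost surely $D_i\in(t_i-\theta_i,t_i+\theta_i)$ and hence for any $y\in T(0)$ we have $H_i(y)=ay+D_i\in(ay+t_i-\theta_i,ay+t_i+\theta_i)$. By \eqref{y69} the latter interval lies in $T(0)$, so $H_i(T(0))\subset T(0)$. An induction on $n=|\mathbf{i}|$ then gives $H_{\mathbf{i}}(T(0))=H_{i_1}\circ H_{i_2\dots i_n}(T(0))\subset H_{i_1}(T(0))\subset T(0)$ for every $\mathbf{i}\in\mathcal{L}_n$.

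There is really no substantive obstacle; the only point deserving care is the equality (not merely inclusion) $\Psi(T(0))=T(0)$, which needs both the monotone nesting $V_{m+1}\subset V_m$ and the definition of $N_0$. Everything else is a direct application of results already established.
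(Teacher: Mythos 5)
Your proof is correct and follows essentially the same route as the paper's own argument: the inclusion \eqref{y69} is derived from the fixed-point identity $T(0)=V_{N_0}=V_{N_0+1}=\Psi(T(0))$ together with Claim \ref{z65}(a), and then \eqref{y67} is obtained by observing that $H_i(y)\in(ay+t_i-\theta_i,ay+t_i+\theta_i)$ almost surely and iterating. You merely spell out the small details (the use of monotonicity of $\Psi$ and the two-sided inclusion giving $V_{N_0}=V_{N_0+1}$) that the paper leaves implicit.
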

\begin{proof}
  The implication in \eqref{y69} follows from  \eqref{y80extra} and from the fact
 that $T(0)=V_{N_0}=V_{N_0+1}=\Psi (V_{N_0})$.
 Using this, and the definition of $H_i$ we get that  the image \fmu $H_{i}(y)$ of an $y\in T(0)$ by the random mapping
 $H_{i}$ satisfies $H_{i}(y) \in (ay+t_i-\theta _i,ay+t_i+\theta _i)$ for all $i\in [L]$.
 Successive applications of this inclusion  yields \eqref{y67}.
\end{proof}

\begin{lemma}\label{z39}
  There is an $\widetilde{\varepsilon} >0$ such that
for all $0<\varepsilon <\widetilde{\varepsilon }$  the Perron Frobenius eigenvalue of the operator $F^{\varepsilon }$ is greater than $1$.
\end{lemma}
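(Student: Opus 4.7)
The plan is to establish Lemma~\ref{z39} by a perturbation argument around the ``limit'' operator $F^{0}$ with kernel $m_{I}\cdot\indicator_{T(0)\times T(0)}$. I will first show $\rho(F^{0})\ge La>1$ by a deterministic integration identity that exploits the invariance $\Heta_{\vect{i}}(T(0))\subset T(0)$ from Fact~\ref{y70}, and then transfer this bound to $F^{\varepsilon}$ for small $\varepsilon$ via Hilbert--Schmidt convergence combined with continuity of the non-zero spectrum of compact operators.

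For the first step, iterating Fact~\ref{y70} shows that whenever the terminal preimage $\Heta_{\vect{i}}^{-1}(x)$ lies in $T(0)$, every intermediate preimage $\Heta_{i_{1}\dots i_{k}}^{-1}(x)$ lies in $T(0)$ as well. Consequently, for every realisation,
\begin{equation*}
\Z_{n}(x,T(0)) \;=\; \#\bigl\{\vect{i}\in\mathcal{L}_{n}:x\in \Heta_{\vect{i}}(T(0))\bigr\},
\end{equation*}
and since $\Heta_{\vect{i}}(T(0))\subset T(0)$ has Lebesgue measure exactly $a^{n}\,\Leb(T(0))$,
\begin{equation*}
\int_{T(0)}\ev[\Z_{n}(x,T(0))]\,\mathrm{d}x
\;=\; \sum_{\vect{i}\in\mathcal{L}_{n}}\ev\!\left[\Leb\bigl(T(0)\cap \Heta_{\vect{i}}(T(0))\bigr)\right]
\;=\; (La)^{n}\,\Leb(T(0)).
\end{equation*}
As $\ev[\Z_{n}(x,T(0))]=(F^{0})^{n}\indicator_{T(0)}(x)$, this yields $\|(F^{0})^{n}\|_{L^{1}\to L^{1}}\ge (La)^{n}$ and hence $\rho(F^{0})\ge La$. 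The hypothesis $s(\mathcal{H})>1$ combined with $La^{s(\mathcal{H})}=1$ gives $La>La^{s(\mathcal{H})}=1$, so $\rho(F^{0})>1$.

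For the second step I extend $F^{\varepsilon}$ to an operator $\widetilde{F}^{\varepsilon}$ on $L^{2}(T(0))$ by zero on $T(0)\setminus T(\varepsilon)$; its kernel is $m_{I}\cdot\indicator_{T(\varepsilon)\times T(\varepsilon)}$ and a direct check shows that its non-zero spectrum coincides with that of $F^{\varepsilon}$. By assumption~\textbf{A2} the kernel $m_{I}$ is bounded, and since $T(0)$ is a finite disjoint union of open intervals, $\Leb(T(0)\setminus T(\varepsilon))\to 0$ as $\varepsilon\to 0$. Therefore
\begin{equation*}
\|F^{0}-\widetilde{F}^{\varepsilon}\|_{\mathrm{HS}}^{2}
\;\le\; 2\,\|m_{I}\|_{\infty}^{2}\,\Leb(T(0))\,\Leb(T(0)\setminus T(\varepsilon)) \;\longrightarrow\; 0.
\end{equation*}
Both operators are Hilbert--Schmidt, hence compact, and operator-norm convergence of compact operators forces the non-zero spectrum to converge in the Hausdorff metric; in particular, the Perron--Frobenius eigenvalue depends continuously on $\varepsilon$, so $\rho(F^{\varepsilon})\to \rho(F^{0})\ge La>1$. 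Any $\widetilde{\varepsilon}>0$ small enough that $|\rho(F^{\varepsilon})-\rho(F^{0})|<La-1$ on $(0,\widetilde{\varepsilon})$ then delivers the lemma.

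The one delicate point I anticipate is that $F^{0}$ need not satisfy the uniform positivity condition~\textbf{(C1)} near the boundary of $T(0)$, so Harris' theorem is unavailable to compute $\rho(F^{0})$ exactly; the one-sided lower bound $\rho(F^{0})\ge La$, together with operator-norm continuity of the non-zero spectrum of compact operators, is all that is required.
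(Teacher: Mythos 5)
Your proof is correct and, unlike the paper (which simply defers to \cite[Lemma 8A]{dekking2011algebraic}), it is self-contained, so it is a genuine contribution rather than merely a reproduction. The architecture — establish $\rho(F^{0})\ge La>1$ for the un-truncated operator on the invariant pre-type space $T(0)$, then transfer to $F^{\varepsilon}$ by Hilbert--Schmidt convergence and Riesz-projection continuity of the nonzero spectrum of compact operators — is the natural route, and the key computation, $\int_{T(0)}\ev[\Z_n(x,T(0))]\,\mathrm dx = \sum_{\vect{i}\in\mathcal{L}_n}\Leb\bigl(H_{\vect{i}}(T(0))\bigr)=(La)^n\Leb(T(0))$, is exactly the device the paper itself uses in the proof of Lemma~\ref{lem:34} (with $W$ there instead of $T(0)$). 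Your identification of the naive count $\#\{\vect{i}:x\in H_{\vect{i}}(T(0))\}$ with the branching-process quantity $\Z_n(x,T(0))$ is justified by Fact~\ref{y70}: since $H_i(T(0))\subset T(0)$, every intermediate preimage stays in $T(0)$, so no branch dies at $\Theta$, which is the one place this identity could have failed.

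Two small remarks. First, you pass through $\|(F^{0})^{n}\|_{L^{1}\to L^{1}}\ge (La)^{n}$, but $F^{\varepsilon}$ (and hence the object whose Perron--Frobenius eigenvalue the lemma concerns) acts on $L^{2}(T(\varepsilon))$; it is cleaner, and avoids any discussion of whether $L^1$ and $L^2$ spectral radii agree, to note directly that
\begin{equation*}
(La)^{n}\Leb(T(0))=\bigl\langle \indicator_{T(0)},\,(F^{0})^{n}\indicator_{T(0)}\bigr\rangle_{L^{2}}
\le \Leb(T(0))\,\|(F^{0})^{n}\|_{L^{2}\to L^{2}},
\end{equation*}
so that the lower bound on the spectral radius is obtained on the relevant space. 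Second, you should say a word on why the spectral radius of $F^{\varepsilon}$ is the Perron--Frobenius eigenvalue appearing in the statement: this follows from Harris' Theorem~\ref{Harris:10.1} once condition~\eqref{cond:dens} holds, and \eqref{cond:dens} is part~(2) of the Main Lemma, which is proved (Proposition~\ref{z29}) independently of part~(3), so there is no circularity. With those two points made explicit, the argument is complete.
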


The proof can be obtained by obvious modifications from the proof of \cite[Lemma 8A]{dekking2011algebraic}.

\begin{definition}[Type space]\label{z42}\
\begin{enumerate}
[{\bf (a)}]
  \item First we define
  \begin{equation}
  \label{z41}
  \varepsilon _{{\rm\scriptscriptstyle{MAIN}}}:=
  \frac{a}{10}\min\left\{
w,\widehat{d},\min\limits_{i\in[\tau]}(\beta _i-\alpha _i),
\widetilde{\varepsilon  }
   \right\}.
  \end{equation}
  \item
Fix an arbitrary  $0< \varepsilon < \varepsilon _{{\rm\scriptscriptstyle{MAIN}}}$. The type space is defined by
\begin{equation}
  \label{z40}
  T(\varepsilon ):=
  \bigcup\limits_{i\in[\tau]}\left[
  \alpha _i+\varepsilon ,\beta _i-\varepsilon
   \right].
  \end{equation}
\end{enumerate}
\end{definition}

\begin{claim}\label{z37}
  For an $x_0\in T(\varepsilon )$ we define
  \begin{equation}
  \label{z36}
  E _{1}^{\varepsilon }(x_0):=\left\{
    y\in T(\varepsilon ):
    m^{\varepsilon }(x_0,y)>0
   \right\}=
   \left\{ (x,y):x=x_0 \right\}\cap \mathrm{supp}\left( m _{1}^{\varepsilon } \right).
  \end{equation}
  Then there exists a $\kappa=\kappa (\varepsilon )>0$ such that for all $x_0\in T(\varepsilon )$
  the set $E _{1}^{\varepsilon }(x_0)$ contains an interval of length $\kappa$.
\end{claim}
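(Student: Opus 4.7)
The plan is to locate, for each $x_0\in T(\varepsilon)$, one of the parallelograms $P_{(\alpha_i,\beta_i)}^k$ whose projections cover $T(0)=\Psi(T(0))$ (see \eqref{y66}) and whose vertical section at $x=x_0$ is an interval of length bounded below uniformly in $x_0$. Since $P_{(\alpha_i,\beta_i)}^k\subset S_k\subset\mathrm{supp}(m_I)$, every $y$ in such a section gives $m_I(x_0,y)>0$, so trimming the section to $T(\varepsilon)$ produces an interval $J\subset E_1^{\varepsilon}(x_0)$, and it remains to control $|J|$ from below by a quantity $\kappa=\kappa(\varepsilon)>0$ independent of $x_0$.

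First I would choose a good pair $(i,k)$. Because $T(0)=\bigcup_{(i,k)}(a_{i,k},b_{i,k})$, there is some $(i,k)$ with $x_0\in(a_{i,k},b_{i,k})$. If $\min(x_0-a_{i,k},\,b_{i,k}-x_0)\ge\varepsilon$ I keep this pair. Otherwise one endpoint, say $a_{i,k}$, satisfies $|x_0-a_{i,k}|<\varepsilon$; since $x_0\in T(\varepsilon)$ sits at distance $\ge\varepsilon$ from $\partial T(0)$, the endpoint $a_{i,k}$ cannot lie in $\partial T(0)$ and must be interior to $T(0)$. Claim~\ref{z44} then delivers a pair $(i',k')$ with $a_{i,k}-a_{i',k'}\ge\widehat{d}$ and $b_{i',k'}-a_{i,k}\ge\widehat{d}$; combined with $\varepsilon<\varepsilon_{{\rm\scriptscriptstyle{MAIN}}}\le a\widehat{d}/10<\widehat{d}/2$ this gives $\min(x_0-a_{i',k'},\,b_{i',k'}-x_0)\ge\widehat{d}/2>\varepsilon$. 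After relabelling, I may henceforth assume
$$x_0\in(a_{i,k},b_{i,k}),\qquad \min(x_0-a_{i,k},\,b_{i,k}-x_0)\ge\varepsilon.$$

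Next I would read off the vertical section of $P_{(\alpha_i,\beta_i)}^k$ at $x=x_0$ using the slanted boundaries of slope $1/a$:
$$J_0=\bigl(\max\{\alpha_i,\,\beta_i-(b_{i,k}-x_0)/a\},\ \min\{\beta_i,\,\alpha_i+(x_0-a_{i,k})/a\}\bigr)\subset(\alpha_i,\beta_i).$$
Using the width identity $b_{i,k}-a_{i,k}=w_k+a(\beta_i-\alpha_i)\ge w+a(\beta_i-\alpha_i)$ together with the distance bound from the previous step, a case analysis on the four possible shapes of $J_0$ (full section, touching only $\alpha_i$, only $\beta_i$, or neither) gives $|J_0|\ge\min\{\varepsilon/a,\,w/a,\,\beta_i-\alpha_i\}$. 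Since $T(\varepsilon)\cap(\alpha_i,\beta_i)=(\alpha_i+\varepsilon,\beta_i-\varepsilon)$, intersecting $J_0$ with $T(\varepsilon)$ cuts a strip of width at most $\varepsilon$ off each end of $J_0$ that reaches $\alpha_i$ or $\beta_i$, and leaves an interval $J:=J_0\cap T(\varepsilon)$ of length at least
$$\kappa:=\min\Bigl\{\tfrac{(1-a)\varepsilon}{a},\ \tfrac{w}{a}-2\varepsilon,\ \min_{j\in[\tau]}(\beta_j-\alpha_j)-2\varepsilon\Bigr\},$$
which is positive by $\varepsilon<\varepsilon_{{\rm\scriptscriptstyle{MAIN}}}\le (a/10)\min\{w,\,\min_j(\beta_j-\alpha_j)\}$.

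The main obstacle is the case analysis above: the shape of $J_0$ degenerates as $x_0$ approaches a corner of $P_{(\alpha_i,\beta_i)}^k$, so one has to verify in each of the four configurations that the $\varepsilon$-neighbourhood of $\{\alpha_i,\beta_i\}$ which gets cut away does not swallow all of $J_0$. The genuinely delicate case is when $x_0$ lies within $\varepsilon$ of an internal endpoint $a_{i,k}$ shared by two parallelograms of the $\Psi$-decomposition of $T(0)$; it is precisely there that Step~1 (Claim~\ref{z44} together with $\varepsilon<a\widehat{d}/10$) lets us switch to a different parallelogram inside whose interior $x_0$ has a $\widehat{d}/2$-neighbourhood.
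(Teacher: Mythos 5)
Your argument is correct and takes essentially the same route as the paper: both handle $x_0$ lying within $\varepsilon$ of an endpoint $a_{i,k}$ by invoking Claim~\ref{z44} to relocate into the interior of another $\pi_1(P^{k'}_{(\alpha_{i'},\beta_{i'})})$, and both then lower-bound the vertical section of a parallelogram trimmed to $T(\varepsilon)$ (the paper packages this into the hexagon $H_{i,k}$ of \eqref{z35} and the geometric bound \eqref{z34}, whereas you spell out the four-way case analysis explicitly). Your $\kappa=\min\{(1-a)\varepsilon/a,\ w/a-2\varepsilon,\ \min_j(\beta_j-\alpha_j)-2\varepsilon\}$ reduces to the paper's $\varepsilon(1/a-1)$ once one observes that the other two terms dominate it under $\varepsilon<\varepsilon_{{\rm\scriptscriptstyle{MAIN}}}$.
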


\begin{proof}
  First recall that $T(0)=V_{N_0}=\pi _1\left( G_{N_0} \right)$
  and $G_{N_0}=\bigcup\limits_{k\in[M],i\in[\tau]}P _{(\alpha_i,\beta _i)}^{ k}$  and  $(a_{i,k},b_{i,k})=\pi _1\left( P _{ (\alpha _i,\beta _i)}^{ k} \right)$. Now we define the hexagon
  \begin{equation}
  \label{z35}
  H_{i,k}:=P _{(\alpha_i,\beta _i)}^{ k}
  \cap
  \left\{ (x,y):
  a_{i,k}+\varepsilon \leq x\leq  b_{i,k}-\varepsilon,
  \alpha _i+\varepsilon \leq y \leq
  \beta _i-\varepsilon
  \right\}.
  \end{equation}
  See Figure \ref{y97}.
  Clearly,
  \begin{equation}
  \label{z31}
  \mathrm{supp}(m _{1}^{\varepsilon  })=
  \bigcup\limits_{k\in[M],i\in[\tau]}
  H_{i,k}.
  \end{equation}

\begin{figure}[ht!]
  \includegraphics[height=9cm]{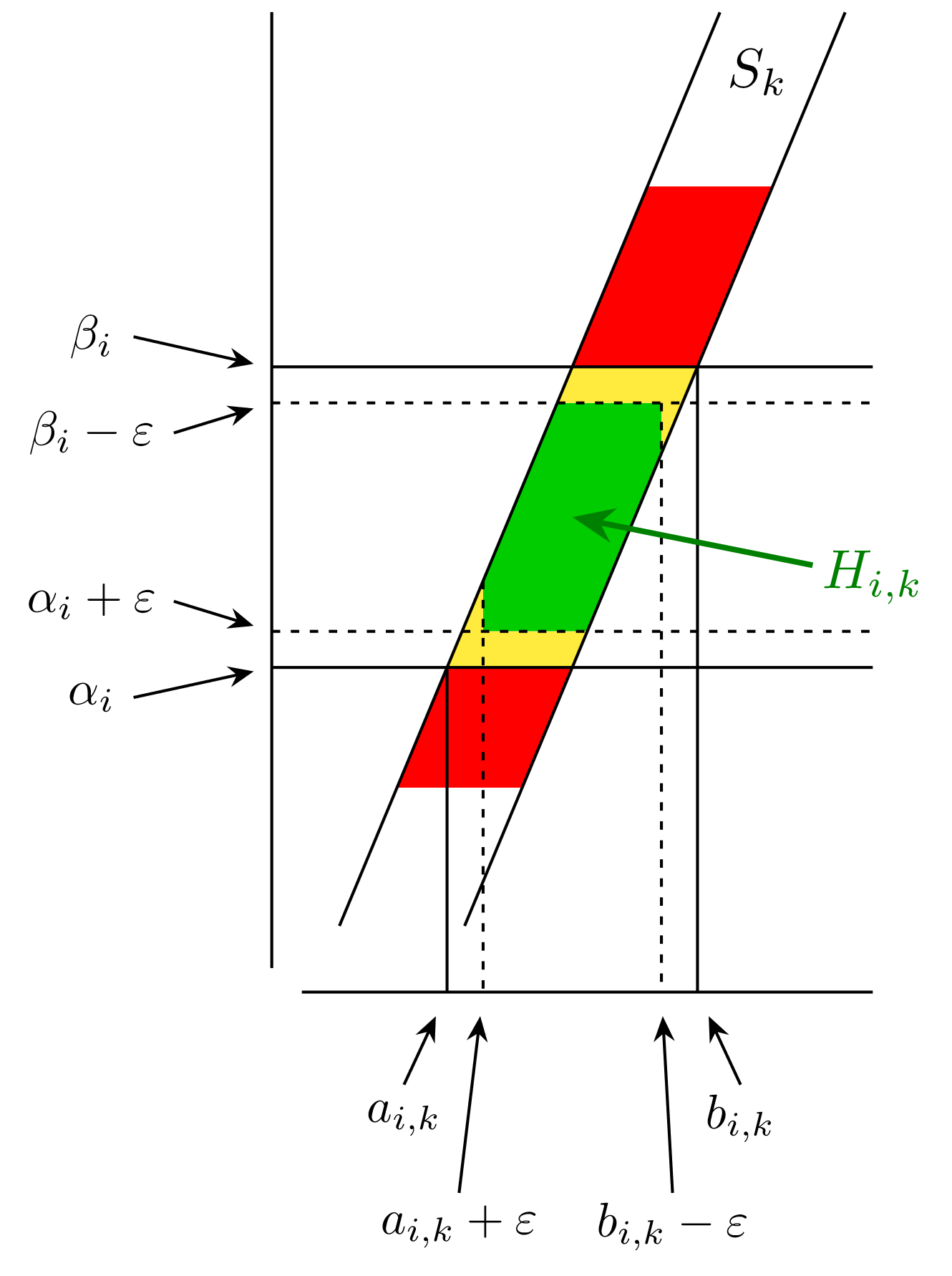}
  \caption{The definition of $H_{i,k}$.}
  \label{y97}
  \end{figure}

Observe that by elementary geometry
\begin{equation}
\label{z34}
\forall k\in[M],\  i\in[\tau],\  x_0\in \pi _1(H_{i,k}) \text{ we have }
|\left\{ (x,y):x=x_0 \right\}\cap
H_{i,k}
|\geq \varepsilon \left( \frac{1}{a} -1\right).
\end{equation}
Now we verify that
\begin{equation}
\label{z33}
T(\varepsilon )\subset
\bigcup\limits_{k\in[M],i\in[\tau]}
\pi _1
\left(
 H_{i,k}
 \right).
\end{equation}
Namely, $T(\varepsilon )\subset T(0)$ and in this way
for all $x_0\in T(\varepsilon )$
there exists a
$k\in[M]$ and an $i\in[\tau]$ such that
$x_0\in \pi _1(P _{(\alpha_i,\beta _i)}^{ k})=(a_{i,k},b_{i,k})$. Observe that
\begin{equation}
\label{z32}
a_{i,k}+\varepsilon  \leq x_0 \leq  b_{i,k}-\varepsilon
\Longrightarrow
x_0\in\pi _1\left( H_{i,k} \right).
\end{equation}
If the condition of \eqref{z32} does not hold then we may assume, without loss of generality that
\begin{equation}
\label{z30}
a_{i,k}< x_0< a_{i,k}+\varepsilon.
\end{equation}
 This and $x_0\in T(\varepsilon )$ imply that $a_{i,k}\in T(0)$. We argue by contradiction. If
$a_{i,k}\not\in T(0)$
then there is a  $j\in [\tau]$ such that $a_{i,k}=\alpha _j$. Then
$x_0\in (\alpha _j,\alpha _j+\varepsilon )\subset T(\varepsilon )^c$ which contradicts to our assumption that $x_0\in T(\varepsilon )$. So, we have verified that $a_{i,k}\in T(0)$.  Then
we can apply Claim \ref{z44} to conclude that there exists an $\ell \in[M]$ and
$j\in[\tau]$ such that
 $a_{j,\ell }+\widehat{d} <a_{i,k}<b_{j,\ell }-\widehat{d} $. Putting together this \eqref{z30} and \eqref{z41} we obtain that
$a_{j,\ell }+\varepsilon <x_0< b_{j,\ell }-\varepsilon $. As we have seen above this means that $x_0\in \pi _1(H_{j,\ell })$. This proves that \eqref{z33} holds. {Putting together
\eqref{z33},
\eqref{z34} and \eqref{z31} we get that the assertion of the Claim is true.}
\end{proof}
As a byproduct of the previous proof we obtain that

\begin{equation}
\label{y65}
T(\varepsilon )= \bigcup\limits_{k\in[M],i\in[\tau]} \pi _1 \left( H_{i,k} \right) =
 \bigcup\limits_{k\in[M],i\in[\tau]} \fmu [a_{i,k}+\varepsilon ,b_{i,k}-\varepsilon ].
\end{equation}
Namely, the non-trivial inclusion was verified above. The opposite inclusion is obvious by the definitions.

Our aim is to prove the following proposition which is actually Part (2) of the Main Lemma.

\begin{proposition}\label{z29}
  Fix an $0<\varepsilon <\varepsilon _{{\rm\scriptscriptstyle{MAIN}}}$. Then there exists an $n$
  such that for every $x_0\in T(\varepsilon )$   we have
  \begin{equation}
  \label{z28}
  E _{n}^{\varepsilon }(x_0):=   \left\{ (x,y): x=x_0 \right\}\cap  \mathrm{supp}(m _{n}^{\varepsilon  })
  = \left\{y: m _{n}^{\varepsilon  }(x_0,y)>0  \right\}=T(\varepsilon ).
  \end{equation}
\end{proposition}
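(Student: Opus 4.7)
The plan is to interpret the sequence $E_n^\varepsilon(x_0)$ as the iterates of a deterministic set-valued operator on open subsets of $T(\varepsilon)$, and then to exploit the invariance $\Psi(T(0))=T(0)$ from the construction of the pre-type space together with the strict expansion of intervals built into the slope $1/a$ of the sides of the strips $S_k$.

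Define $\Psi_+^\varepsilon(A):=\{y\in T(\varepsilon):\exists\,z\in A\text{ with }(z,y)\in\mathrm{supp}(m_1^\varepsilon)\}$ for $A\subset T(\varepsilon)$, i.e.\ the union over $z\in A$ of the vertical cross-sections of $\bigcup_{i,k}H_{i,k}$, intersected with $T(\varepsilon)$. A short induction on $n$, using the continuity of the $\varphi_i$, should show that $E_n^\varepsilon(x_0)$ is open, that $m_n^\varepsilon(x_0,\cdot)$ is strictly positive and lower semi-continuous on $E_n^\varepsilon(x_0)$, and that $E_{n+1}^\varepsilon(x_0)=\Psi_+^\varepsilon(E_n^\varepsilon(x_0))$. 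The base case $n=1$ is Claim~\ref{z37}: $E_1^\varepsilon(x_0)$ is open and contains an interval of length at least $\kappa(\varepsilon)>0$ for every $x_0\in T(\varepsilon)$.

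Next I would analyse how $\Psi_+^\varepsilon$ acts on intervals. For $J=[z_1,z_2]\subset\pi_1(H_{i,k})=[a_{i,k}+\varepsilon,b_{i,k}-\varepsilon]$, the slanted sides $\ell_k^1,\ell_k^2$ of slope $1/a$ give $\pi_2(H_{i,k}\cap(J\times\mathbb{R}))$ as an interval of length $|J|/a+2\theta_k/a$, clipped only by $[\alpha_i+\varepsilon,\beta_i-\varepsilon]$. Hence each application of $\Psi_+^\varepsilon$ either multiplies the length of a component by a factor $\ge 1/a>1$ or saturates it to the full interval $[\alpha_i+\varepsilon,\beta_i-\varepsilon]$; in particular, after finitely many steps every component of $E_n^\varepsilon(x_0)$ that survives coincides with one of the connected components of $T(\varepsilon)$. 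The dual form of \eqref{y65}, namely $\pi_2(\bigcup_{i,k}H_{i,k})=T(\varepsilon)$, also yields $\Psi_+^\varepsilon(T(\varepsilon))=T(\varepsilon)$, so the iterates always remain in $T(\varepsilon)$.

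It then remains to verify that the iterates do not stall at a proper union of components of $T(\varepsilon)$, and that $n$ may be chosen uniformly in $x_0$. Uniformity is a standard compactness argument: by lower semi-continuity of $m_n^\varepsilon$ the sets $\{x_0:E_n^\varepsilon(x_0)=T(\varepsilon)\}$ are open, and their increasing union over $n$ covers the compact $T(\varepsilon)$, so a single $n$ suffices. The main obstacle is the irreducibility step, i.e.\ ruling out a proper $\Psi_+^\varepsilon$-invariant union of connected components of $T(\varepsilon)$. My proposal is to argue by contradiction: such a proper invariant union, with the $\varepsilon$-buffer removed, would give a proper $\Psi$-invariant sub-union of components of $T(0)=V_{N_0}$, and together with Fact~\ref{y70} this would contradict the minimality built into the definition $N_0=\inf\{m:V_m=V_{m+1}\}$ of \eqref{z49}. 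The condition $\varepsilon<\varepsilon_{{\rm\scriptscriptstyle{MAIN}}}$ enters precisely here, through the buffer estimate \eqref{z41}, to guarantee that the hexagons $H_{i,k}$ preserve all the adjacencies already present in the untrimmed parallelograms $P^k_{(\alpha_i,\beta_i)}$.
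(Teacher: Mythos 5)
Your operator $\Psi_+^\varepsilon$ (equivalently~\eqref{z10}) and the slope-$1/a$ expansion argument are in essence how the paper's Step~1 of the proof proceeds, and your compactness argument for a uniform $n$ is an acceptable substitute for the explicit bound $N_1+N_2+N_0$ that the paper gives. The gap is in the irreducibility step. You propose to argue by contradiction that a proper $\Psi_+^\varepsilon$-invariant union of components of $T(\varepsilon)$, after removing the $\varepsilon$-buffer, would yield a proper $\Psi$-invariant sub-union of components of $T(0)$, contradicting the minimality of $N_0$. This fails on two counts. First, $\Psi$ from~\eqref{z67} is the \emph{backward} operator: $\Psi(\{y\})=\bigcup_i(ay+t_i-\theta_i,ay+t_i+\theta_i)$ is the set of possible parents $H_i(y)$ of a type-$y$ individual, whereas your $\Psi_+^\varepsilon$ is the forward (branching) one. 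So $\Psi_+$-invariance of a union $U'$ translates into $\Psi$-invariance of the \emph{complement} $T(0)\setminus U'$, not of $U'$ itself. Second, and more decisively, even a genuine proper $\Psi$-invariant union of components of $T(0)$ would be entirely compatible with $N_0=\inf\{m:V_m=V_{m+1}\}$: that infimum only records when the nested sequence $V_m$ stabilizes and places no constraint on the internal combinatorics of the stable set $T(0)$. Fact~\ref{y70} is precisely the statement $\Psi(T(0))\subset T(0)$, so it cannot rule out a proper invariant sub-union either. Minimality of $N_0$ is not an irreducibility statement.

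What actually drives irreducibility in the paper's Step~2 is the tree of green intervals $\mathcal{G}_0,\dots,\mathcal{G}_{N_0}$ with a \emph{single root}. Each connected component $(\alpha_i,\beta_i)$ of $T(0)$ lies in $\mathcal{G}_{N_0}$, and Claim~\ref{z20} together with~\eqref{z03} shows that once $E_n(x)$ contains a component of $\mathcal{G}_m$ (intersected with $T(\varepsilon)$), one further application of the forward dynamics makes $E_{n+1}(x)$ contain a ``child'' component in $\mathcal{G}_{m-1}$. Since $V_0=\mathrm{int}(\widetilde{I})$ is a single interval, $\mathcal{G}_0=\{(\widetilde{\alpha},\widetilde{\beta})\}$ is a singleton, so any chain of children from level $N_0$ reaches the unique root after at most $N_0$ steps, and $(\widetilde{\alpha},\widetilde{\beta})\cap T(\varepsilon)=T(\varepsilon)$. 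This single-root tree structure --- not the minimality of $N_0$ --- is the ingredient your proposal is missing; without it the contradiction you seek does not materialize.
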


\subsubsection{The structure of green and red areas}
To prove Proposition \ref{z29} we need to verify some auxiliary facts about the structure of the green and red areas and intervals.

For an $m\leq N_0$,
we write $\mathcal{B}_m$ and $\mathcal{J}_m$ for the collection of the left and right endpoints respectively, of the level $m$ red intervals (intervals from $\mathcal{R}_m$), and $\mathcal{B}=\cup_{i=1}^{N_0}\mathcal{B}_m$ and
$\mathcal{J}=\cup_{i=1}^{N_0}\mathcal{J}_m$.
It is immediate from the construction that the following \fmu statements hold:
\begin{fact}\label{z27}
\begin{enumerate}
[{\bf (a)}]
  \item For all $1\leq m\leq N_0$ and for all $(\alpha ',\beta ')\in \mathcal{G}_m$
there exists $v(\alpha ')\in\bigcup\limits _{\ell =1}^{m }\mathcal{J}_{\ell }$
$u(\beta ')\in \bigcup\limits _{\ell =1}^{m }\mathcal{B}_{\ell }$ such that $(\alpha ',\beta ')=(v(\alpha '),u(\beta '))$.
\item All elements of $\bigcup\limits _{\ell =1}^{m } \mathcal{B}_{\ell }$
 are \fm right endpoints of an element of $\mathcal{G}_m$. Similarly, all  elements of $\bigcup\limits _{\ell =1}^{m } \mathcal{J}_{\ell }$
 are \fm left endpoints of an element of $\mathcal{G}_m$.
\end{enumerate}
\end{fact}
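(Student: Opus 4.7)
The plan is to prove both parts simultaneously by induction on $m$, using the identity $V_{m+1} = V_m \setminus \bigsqcup \mathcal{R}_{m+1}$ together with Lemma \ref{z74}, which guarantees that every level-$(m+1)$ red interval sits inside some level-$m$ green component. The base case $m=1$ is direct: the intermediate endpoints of $V_1 = \Psi(V_0)$ are by definition exactly the endpoints of the components of $V_0 \setminus V_1 = \bigsqcup \mathcal{R}_1$, so for any $(\alpha',\beta') \in \mathcal{G}_1$ with $\alpha' \ne \widetilde{\alpha}$ and $\beta' \ne \widetilde{\beta}$ one has $\alpha' \in \mathcal{J}_1$ and $\beta' \in \mathcal{B}_1$; conversely, each $[u_i^{(1)}, v_i^{(1)}] \in \mathcal{R}_1$ sits inside $(\widetilde{\alpha},\widetilde{\beta})$, and its removal produces green components in $\mathcal{G}_1$ having $u_i^{(1)}$ (resp.\ $v_i^{(1)}$) as right (resp.\ left) endpoint, giving (b) at level $1$.

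For the inductive step, let $(\alpha',\beta') \in \mathcal{G}_{m+1}$; then $(\alpha',\beta')$ is a connected component of $V_{m+1}$ contained in some $(\alpha_j^{(m)},\beta_j^{(m)}) \in \mathcal{G}_m$. Ordering the level-$(m+1)$ reds that lie inside $(\alpha_j^{(m)},\beta_j^{(m)})$ from left to right as $[u_{i_1}^{(m+1)},v_{i_1}^{(m+1)}],\dots,[u_{i_p}^{(m+1)},v_{i_p}^{(m+1)}]$, the interval $(\alpha',\beta')$ must equal one of $(\alpha_j^{(m)},u_{i_1}^{(m+1)})$, $(v_{i_s}^{(m+1)},u_{i_{s+1}}^{(m+1)})$, or $(v_{i_p}^{(m+1)},\beta_j^{(m)})$ (or the full $(\alpha_j^{(m)},\beta_j^{(m)})$ if $p=0$). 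In every case the left endpoint is either $\alpha_j^{(m)} \in \bigcup_{\ell \le m} \mathcal{J}_\ell$ by the inductive hypothesis, or some $v_{i_s}^{(m+1)} \in \mathcal{J}_{m+1}$; the right endpoint is handled symmetrically, proving (a) at level $m+1$. For (b) at level $m+1$, an element $b \in \mathcal{B}_{m+1}$ is by the same listing the right endpoint of the level-$(m+1)$ green piece immediately to its left, while for $b \in \mathcal{B}_\ell$ with $\ell \le m$ the inductive hypothesis provides a green $(\alpha'',b) \in \mathcal{G}_m$, and subdividing $(\alpha'',b)$ by the level-$(m+1)$ reds inside it only trims pieces from its left, so $b$ remains the right endpoint of the rightmost resulting piece in $\mathcal{G}_{m+1}$. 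The argument for $\mathcal{J}_\ell$-elements is symmetric.

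The main subtlety I expect to need to address is the status of the extreme points $\widetilde{\alpha}$ and $\widetilde{\beta}$, which may appear as endpoints of the leftmost (resp.\ rightmost) component of some $V_m$ without being literal endpoints of any red interval. This is resolved either by adopting the harmless convention $\widetilde{\alpha} \in \mathcal{J}_0$ and $\widetilde{\beta} \in \mathcal{B}_0$, or by invoking the fixed-point relations $\ell_1^2(\widetilde{\alpha}) = \widetilde{\alpha}$ and $\ell_M^1(\widetilde{\beta}) = \widetilde{\beta}$ from \eqref{z76} to confirm that these boundary points persist as the limit endpoints of the extreme components throughout the iteration, so the statement extends to them by continuity.
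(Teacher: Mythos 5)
The paper offers no written proof of this Fact (it is asserted as ``immediate from the construction''), so there is no argument of the paper's to compare against; your induction, driven by Lemma~\ref{z74} (the level-$(m{+}1)$ red intervals are closed, hence strictly interior to the level-$m$ green components), is the natural formalization and is correct: once you know the reds removed from a green component $(\alpha_j^{(m)},\beta_j^{(m)})$ are finitely many disjoint closed subintervals not touching the boundary, the level-$(m{+}1)$ greens inside it are exactly the open gaps, and both (a) and (b) propagate as you say.

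Your flagging of $\widetilde\alpha,\widetilde\beta$ is a genuine catch about the \emph{statement}, not your proof: since $\widetilde\alpha$ is the fixed point of $\ell_1^2$, it is the left endpoint of the leftmost component of every $V_m$, yet it is never the right endpoint of a level-$\ell$ red interval for $\ell\ge 1$ (all such reds lie strictly inside $(\widetilde\alpha,\widetilde\beta)$); symmetrically for $\widetilde\beta$. So (a) as literally written fails for the two extreme components. Note, however, that of your two proposed fixes the first (the convention $\widetilde\alpha\in\mathcal{J}_0$, $\widetilde\beta\in\mathcal{B}_0$) by itself does not repair the wording, because the unions in (a) start at $\ell=1$ --- one must actually widen them to $\ell=0$; and the second (``extends by continuity'') is not an argument for a set-membership claim. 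Neither issue damages the paper: the only downstream use, Fact~\ref{z26}, needs precisely that every endpoint of a component of $V_m$ is an endpoint of a component of $T(0)=V_{N_0}$, and $\widetilde\alpha,\widetilde\beta$ satisfy this directly because they persist as the extreme endpoints of every $V_m$, including $V_{N_0}$.
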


Hence we get

\begin{fact}\label{z26}  
  Let $(\alpha ',\beta ')\in \mathcal{G}_m$. Then $(\alpha ',\beta ')\cap T(\varepsilon )=(\alpha '+\varepsilon ,\beta '-\varepsilon)\cap T(\varepsilon ) $.
\end{fact}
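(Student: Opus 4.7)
The strategy is to show that both $\alpha'$ and $\beta'$ belong to $\partial T(0)$; once we have this, the definition $T(\varepsilon)=T(0)\setminus B(\partial T(0),\varepsilon)$ forces every point of $T(\varepsilon)$ to lie at distance at least $\varepsilon$ from $\alpha'$ and from $\beta'$, which immediately yields the claimed equality of sets (the potential single-point discrepancy at $\alpha'+\varepsilon$ or $\beta'-\varepsilon$ has no measure-theoretic content for the later use of the fact in Claim \ref{z29}).

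First I would apply Fact \ref{z27}(a) to the green interval $(\alpha',\beta')\in\mathcal{G}_m$ to obtain $\alpha'=v(\alpha')\in\bigcup_{\ell=1}^{m}\mathcal{J}_\ell$ and $\beta'=u(\beta')\in\bigcup_{\ell=1}^{m}\mathcal{B}_\ell$. Since $m\le N_0$, these memberships persist in the larger unions $\bigcup_{\ell=1}^{N_0}\mathcal{J}_\ell$ and $\bigcup_{\ell=1}^{N_0}\mathcal{B}_\ell$. Now I would invoke Fact \ref{z27}(b) \emph{with $N_0$ in place of $m$}: it asserts that every element of these two unions is respectively a left or right endpoint of some component of $\mathcal{G}_{N_0}$, i.e.\ of a connected component of $T(0)=V_{N_0}$. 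Therefore $\alpha',\beta'\in\partial T(0)$.

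With this in hand, the open intervals $(\alpha'-\varepsilon,\alpha'+\varepsilon)$ and $(\beta'-\varepsilon,\beta'+\varepsilon)$ are contained in $B(\partial T(0),\varepsilon)$, so their intersections with $T(\varepsilon)$ are empty. Consequently any $x\in(\alpha',\beta')\cap T(\varepsilon)$ must satisfy $x\ge\alpha'+\varepsilon$ and $x\le\beta'-\varepsilon$, which gives the inclusion $(\alpha',\beta')\cap T(\varepsilon)\subseteq(\alpha'+\varepsilon,\beta'-\varepsilon)\cap T(\varepsilon)$; the reverse inclusion is trivial from $(\alpha'+\varepsilon,\beta'-\varepsilon)\subseteq(\alpha',\beta')$. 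There is no genuine obstacle here, only the bookkeeping that endpoints introduced at some earlier level $\ell\le m\le N_0$ remain endpoints of the final pre-type space $T(0)$, which is exactly what Fact \ref{z27}(b) secures.
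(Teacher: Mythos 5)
Your proof is correct and is essentially the argument the paper gives (the paper compresses it to a one-line appeal to Fact \ref{z27} plus the figure, while you spell out that $\alpha'\in\bigcup_\ell\mathcal{J}_\ell$, $\beta'\in\bigcup_\ell\mathcal{B}_\ell$, and then apply part (b) at level $N_0$ to land in $\partial T(0)$). Your parenthetical observation about the harmless open/closed endpoint mismatch is accurate; the only use of Fact \ref{z26} is in Step~2 of Proposition \ref{z29}, where the closed interval $[\alpha_i+\varepsilon,\beta_i-\varepsilon]$ is already on the left of the inclusion, so the discrepancy never matters.
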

Namely, it follows from {Fact \ref{z27}} that every endpoint of a component of
$V_m$ is an endpoint of a component of
$T(0)$. See Figure \ref{y34}.

\begin{figure}[ht!]
    \includegraphics[height=7cm]{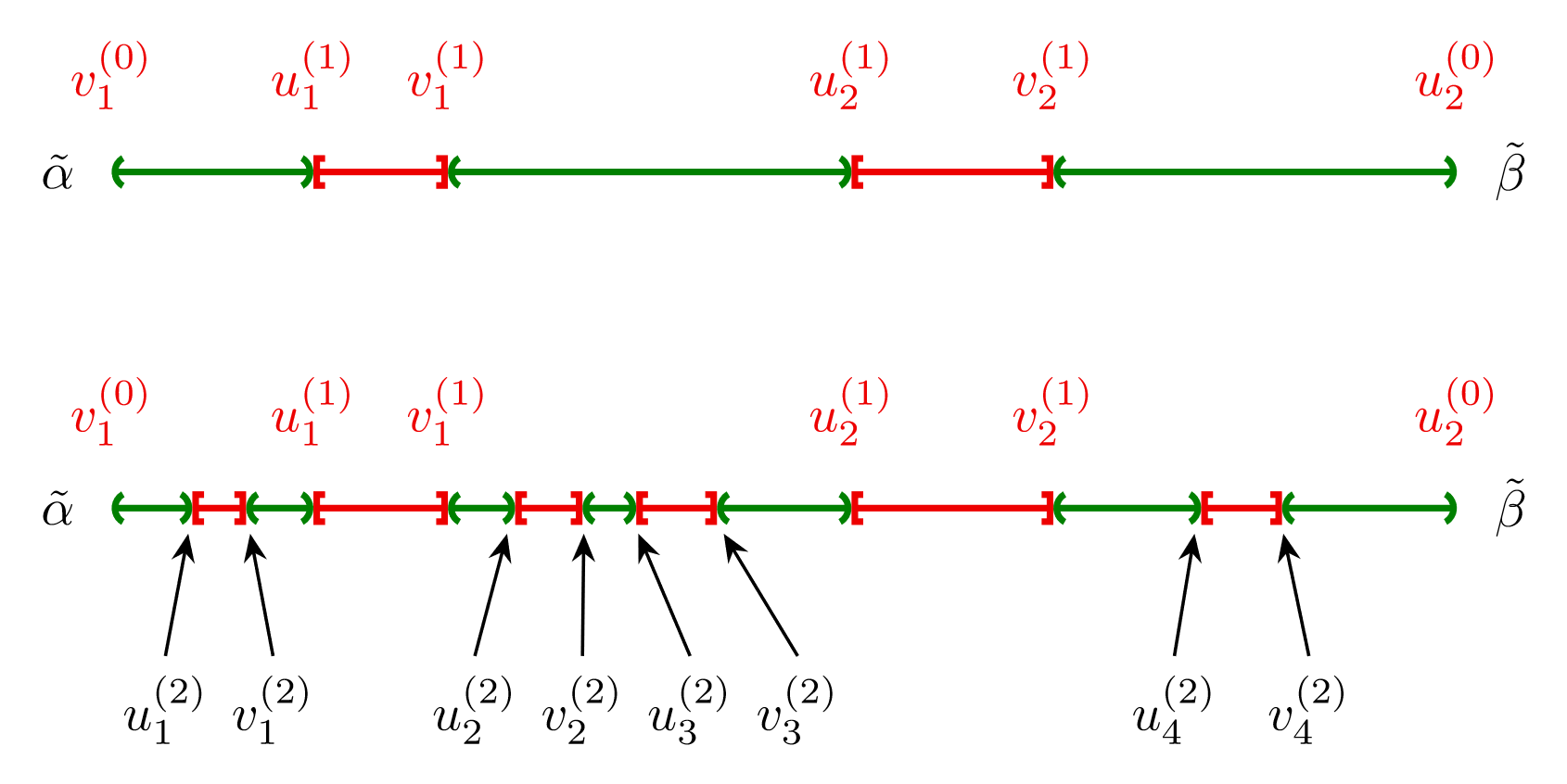}
    \caption{The red and green intervals}.
    \label{y34}
    \end{figure}

\begin{fact}\label{z23}  
  Let $z\in T(\varepsilon )$ and $k\in[M]$. Set
  $
  L(z,k):=\pi _1\left(
\left\{ (x,y):y=z \right\}\cap S_k
   \right).
   $
   Then
   \begin{equation}
   \label{z22}
   L(z,k)\cap T(\varepsilon )\ne \emptyset.
   \end{equation}
\end{fact}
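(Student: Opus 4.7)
The plan is to identify $L(z,k)$ as an open horizontal interval of length equal to $\mathrm{width}(S_k)\ge w$, trap it inside a single connected component of $T(0)$, and then force an intersection with $T(\varepsilon)$ by a short length comparison controlled by the definition of $\varepsilon_{\mathrm{MAIN}}$ in \eqref{z41}.

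Geometrically, $S_k$ is a parallelogram whose top and bottom sides are horizontal and whose other two sides have slope $1/a$. For every height $z\in(\alpha,\beta)$ the horizontal cross-section $\{y=z\}\cap S_k$ is therefore a single open horizontal segment of length $w_k:=\mathrm{width}(S_k)\ge w$, independent of $z$. Now $z\in T(\varepsilon)$ implies $z\in(\alpha_i+\varepsilon,\beta_i-\varepsilon)\subset(\alpha_i,\beta_i)$ for some $i\in[\tau]$, and since $P^k_{(\alpha_i,\beta_i)}=\{(x,y)\in S_k:y\in(\alpha_i,\beta_i)\}$, the cross-section of $S_k$ at height $z$ coincides with the cross-section of $P^k_{(\alpha_i,\beta_i)}$ at height $z$. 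Hence
\begin{equation*}
L(z,k)\subset\pi_1\!\left(P^k_{(\alpha_i,\beta_i)}\right)=(a_{i,k},b_{i,k})\subset T(0),
\end{equation*}
where the final inclusion is the fixed-point identity $T(0)=V_{N_0}=\Psi(V_{N_0})=\bigcup_{k',i'}(a_{i',k'},b_{i',k'})$, which is already used to define $T(0)$ and which relies on the finiteness of $N_0$ from Claim \ref{z15}.

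Since $L(z,k)$ is connected and $T(0)$ is a disjoint union $\bigcup_{j=1}^{\tau}(\alpha_j,\beta_j)$ of open intervals, $L(z,k)$ lies entirely in a single component $(\alpha_j,\beta_j)$. From \eqref{z41} we have
\begin{equation*}
\varepsilon<\varepsilon_{\mathrm{MAIN}}\le\frac{a}{10}\min\!\left\{w,\min_{i\in[\tau]}(\beta_i-\alpha_i)\right\},
\end{equation*}
so both the length of $L(z,k)$ and the length of $(\alpha_j,\beta_j)$ exceed $10\varepsilon$. If $L(z,k)\cap[\alpha_j+\varepsilon,\beta_j-\varepsilon]$ were empty, then connectedness would place $L(z,k)$ entirely in $(\alpha_j,\alpha_j+\varepsilon)$ or entirely in $(\beta_j-\varepsilon,\beta_j)$; but each of these has length $\varepsilon<w\le\mathrm{length}(L(z,k))$, a contradiction. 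Therefore $L(z,k)$ meets $[\alpha_j+\varepsilon,\beta_j-\varepsilon]\subset T(\varepsilon)$, which is exactly \eqref{z22}.

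The only step that requires any care is the containment $L(z,k)\subset T(0)$, which rests on the fixed-point property $\Psi(T(0))=T(0)$; everything else is elementary length bookkeeping made possible by the choice of $\varepsilon_{\mathrm{MAIN}}$.
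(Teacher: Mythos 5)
Your proof is correct and follows essentially the same route as the paper's: both establish $L(z,k)\subset T(0)$ via the fixed-point identity $\Psi(T(0))=T(0)$ (the paper phrases this as $L(z,k)\subset V_{N_0+1}=V_{N_0}$, you as $L(z,k)\subset\pi_1(P^k_{(\alpha_i,\beta_i)})\subset T(0)$, which is the same thing unwound), and both then finish with the observation that the connected interval $L(z,k)$, sitting in one component of $T(0)$ and having length at least $w\gg 2\varepsilon$, cannot be swallowed by the two $\varepsilon$-collars removed when passing to $T(\varepsilon)$.
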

\begin{proof}
 Fix a  $z\in T(\varepsilon )$ and $k\in[M]$.
 Observe that $L(z,k)\subset  T(0)$. Namely, $z\in T(0)=V_{N_0}$. So
 $\pi _1(L(z,k))\subset V_{N_0+1}=V_{N_0}=T(0)$. {Then} $\left(\mathcal{J} \cup \mathcal{B}  \right)\cap L(z,k)=\emptyset $.
 Hence, when we change from $T(0)$ to $T(\varepsilon )$ we can loose in a $1-1$ way intervals of length $\varepsilon $ each at the two ends of the interval of $L(z,k)$. Using that $|L(z,k)|=w_k\gg 2\varepsilon $ we get that
 $|L(z,k)\cap T(\varepsilon )|>w-2\varepsilon $, where $w$ is the minimum width of a stripe $S_k$.
\end{proof}
This implies that
\begin{equation}
\label{z21}
\forall z\in T(\varepsilon ), k\in[M]\  \exists
w\in T(\varepsilon )\cap L(z,k) \text{ such that }
z\in \left( \ell _{k}^{1}(w),\ell _{k}^{2}(w) \right).
\end{equation}

\fm 

\begin{figure}[ht!]
    \includegraphics[height=7cm]{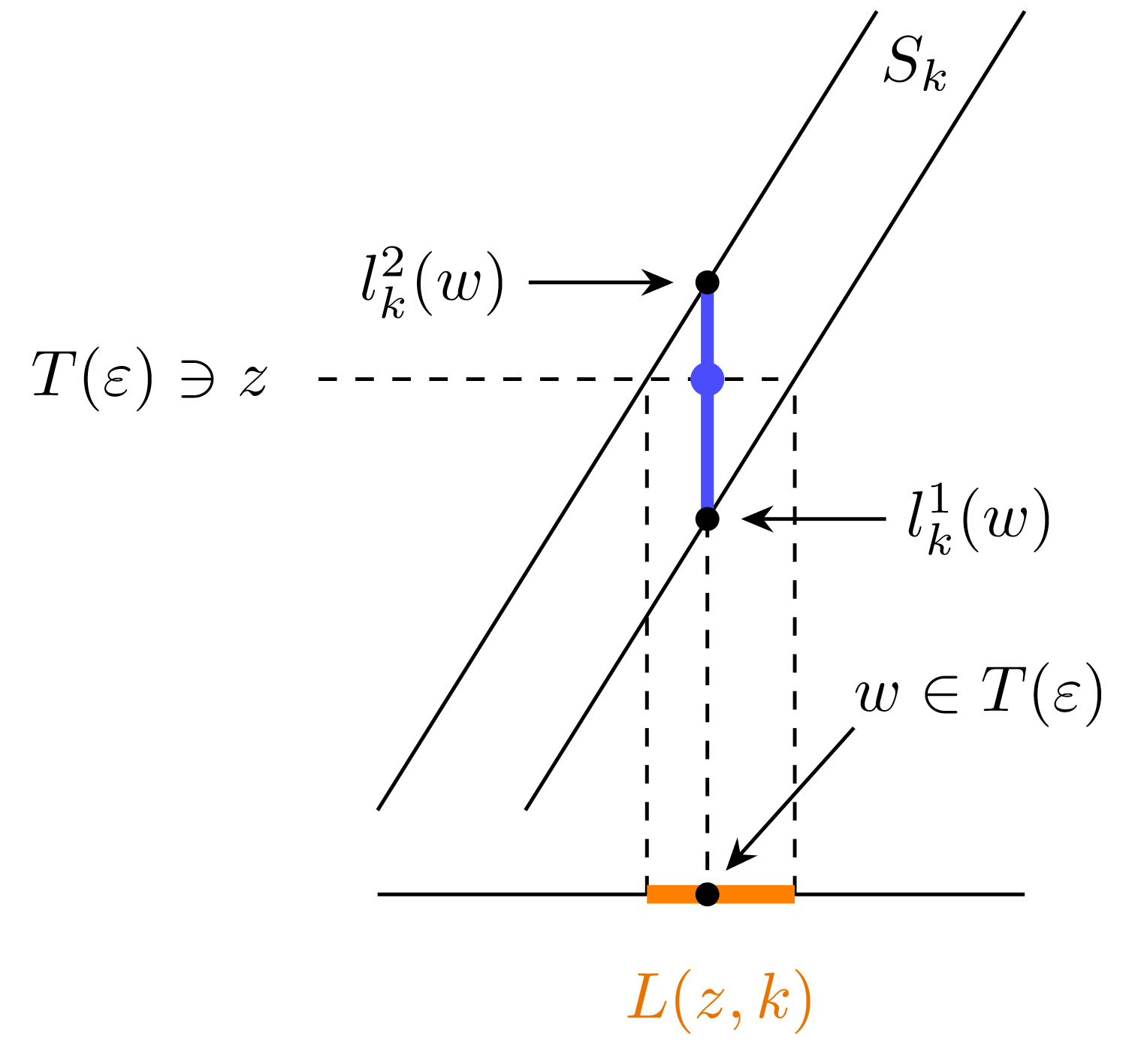}
    \caption{  {The notation of Fact \ref{z23}}. }
    \label{y96}
    \end{figure}

\begin{fact}\label{z25}  
 For an $1\leq m\leq N_0$ let $(\alpha ',\beta ')\in \mathcal{G}_m$.
 Consider the maximal collection of
    $(\alpha '_j,\beta '_j)\in\mathcal{G}_{m-1}$
    and $k(j)\in[M]$, $j=1,\dots  ,\ell $  such that for
$
P_j:= P _{(\alpha '_j,\beta '_j) }^{k(j) }
$ we have $\bigcup\limits_{j=1}^{\ell }
\pi_1\left(
P _j
 \right)\subset (\alpha ',\beta ').$
 See Figure \ref{y95}.
 For $j\ne j'$ if $(\alpha '_j,\beta '_j)=(\alpha '_{j'},\beta '_{j'})$ then
 $k(j)\ne k(j')$.
 We say that the intervals $ (\alpha '_j,\beta '_j) \in \mathcal{G}_{m-1}$, $j=1,\dots  ,\ell $ are the children of the interval $(\alpha ',\beta ')\in \mathcal{G}_m$. Then we have
\begin{equation}
\label{z24}
\bigcup\limits_{j=1}^{\ell }
\pi_1\left(
P _j
 \right)= (\alpha ',\beta ').
\end{equation}

\begin{figure}[ht!]
    \includegraphics[height=7cm]{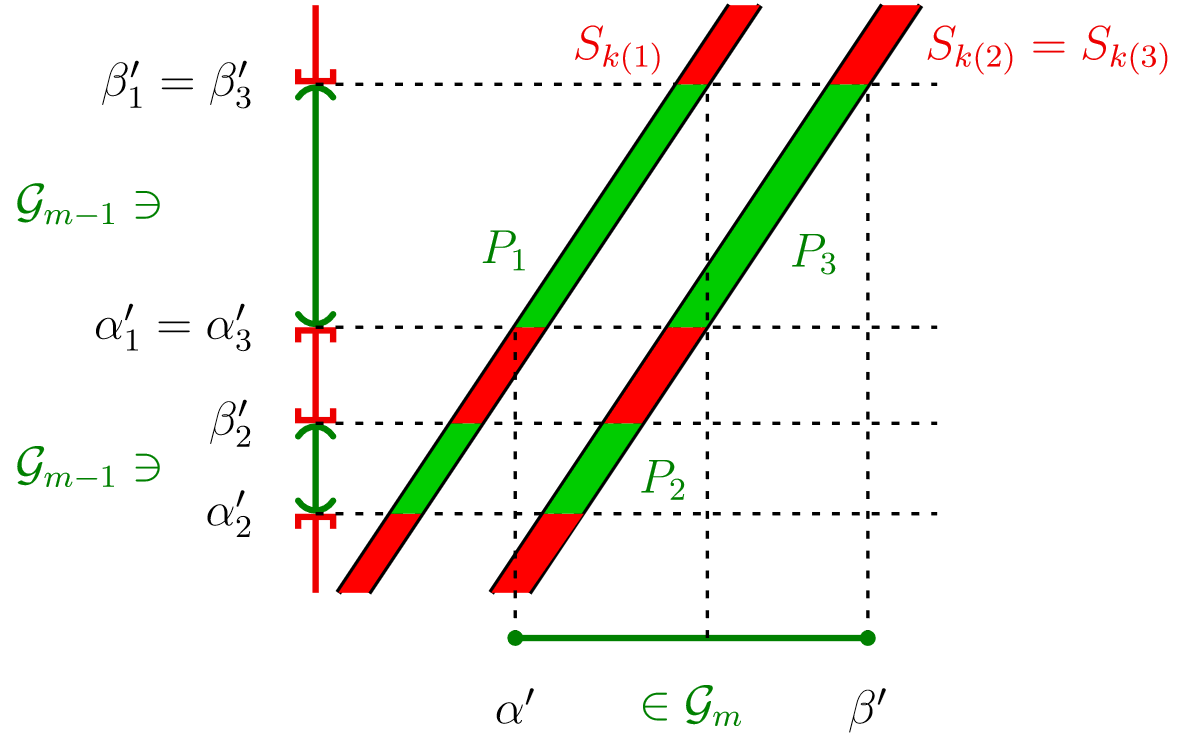}
    \caption{The definition of the parallelograms $P_j$. }
    \label{y95}
    \end{figure}
\end{fact}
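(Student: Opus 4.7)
The plan is to exploit the fact that by construction $V_m=\Psi(V_{m-1})$, so every point of $(\alpha',\beta')\subset V_m$ is the $\pi_1$-image of a point in some parallelogram $P^k_{(\alpha'_i,\beta'_i)}$ with $(\alpha'_i,\beta'_i)\in\mathcal{G}_{m-1}$ and $k\in[M]$. The inclusion ``$\subset$'' in \eqref{z24} is built into the definition of the children, so the real content is the reverse inclusion $(\alpha',\beta')\subset\bigcup_{j}\pi_1(P_j)$.

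First I would take an arbitrary $x\in(\alpha',\beta')$. Since $(\alpha',\beta')$ is by definition a connected component of $V_m=\Psi(V_{m-1})$, and since by \eqref{z67} (applied to the $\Psi$-iteration formula \eqref{z64}--\eqref{z67})
\[
\Psi(V_{m-1})=\bigcup_{k\in[M]}\bigcup_{(\alpha'_i,\beta'_i)\in\mathcal{G}_{m-1}}\pi_1\!\bigl(P^k_{(\alpha'_i,\beta'_i)}\bigr),
\]
there exist some $(\alpha'_i,\beta'_i)\in\mathcal{G}_{m-1}$ and some $k\in[M]$ such that $x\in\pi_1(P^k_{(\alpha'_i,\beta'_i)})$.

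Next, I would observe that $\pi_1(P^k_{(\alpha'_i,\beta'_i)})$ is itself a single open interval (an elementary consequence of the definition \eqref{z77} of $P^k_{\langle u,v\rangle}$, since the parallelogram has two sides of slope $1/a$ and two horizontal sides, so its $\pi_1$-image is connected). This interval is contained in $V_m$ by construction. Because $(\alpha',\beta')$ is a connected component of $V_m$ and $\pi_1(P^k_{(\alpha'_i,\beta'_i)})$ is a connected subset of $V_m$ that meets $(\alpha',\beta')$ at the point $x$, connectedness forces
\[
\pi_1\!\bigl(P^k_{(\alpha'_i,\beta'_i)}\bigr)\subset(\alpha',\beta').
\]
Therefore this pair $\bigl((\alpha'_i,\beta'_i),k\bigr)$ automatically satisfies the defining condition of the ``maximal collection'' of children, so the pair belongs to the family $\{((\alpha'_j,\beta'_j),k(j))\}_{j=1}^\ell$, and $x\in\pi_1(P_j)$ for some $j$. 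This proves \eqref{z24}.

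The claim about distinct children carrying distinct labels $k(j)\ne k(j')$ is purely by the choice of the indexing: the collection is taken to be maximal without repetitions, so if the same parent interval $(\alpha'_i,\beta'_i)\in\mathcal{G}_{m-1}$ gives rise to several $\pi_1$-images inside $(\alpha',\beta')$ via different stripes $S_k$, each is listed once with its own $k$. I do not expect any serious obstacle here; the only point requiring care is the connectedness argument above, where one must explicitly invoke that $\pi_1(P^k_{(\alpha'_i,\beta'_i)})$ is an interval (not merely a set) and that $(\alpha',\beta')$ is a connected component of $V_m$, so that meeting $(\alpha',\beta')$ implies containment.
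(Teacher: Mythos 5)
Your proof is correct and is an expanded, explicit version of what the paper compresses into a single sentence (``immediate from the definitions \eqref{z67} and \eqref{z66}''). The key observations you articulate --- that $V_m=\Psi(V_{m-1})$ is by \eqref{z67} a union of the interval images $\pi_1(P^{k}_{(\alpha'_i,\beta'_i)})$, that each such image is a connected interval, and that a connected subset of $V_m$ meeting a connected component $(\alpha',\beta')$ must lie inside it --- are exactly the content the paper leaves implicit, so this is the same argument spelled out rather than a different route.
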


\begin{proof}
This is immediate from the definitions
    \eqref{z67} and \eqref{z66}.
\end{proof}

\begin{claim}\label{z20}
  Let $1\leq m\leq N_0$ and $(\alpha ',\beta ')\in \mathcal{G}_m$. With the notation of Fact \ref{z25}, for all $j\in[\ell ]$,
  \begin{equation}
  \label{z19}
  \bigcup\limits_{x\in \pi _1\left( P_j\right)\cap T(\varepsilon )}
  \left( \ell  _{k(j)}^{1}(x),\ell  _{k(j)}^{2}(x) \right)\cap T(\varepsilon )
  \supset
  (\alpha '_j,\beta '_j)\cap T(\varepsilon ).
  \end{equation}
\end{claim}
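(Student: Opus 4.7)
The plan is to establish the inclusion \eqref{z19} point by point. Given any $y \in (\alpha'_j,\beta'_j) \cap T(\varepsilon)$, the goal is to exhibit some $x \in \pi_1(P_j) \cap T(\varepsilon)$ for which $y \in (\ell^1_{k(j)}(x),\ell^2_{k(j)}(x))$; once such an $x$ is found, intersecting with $T(\varepsilon)$ on the right (which contains $y$) places $y$ into the union on the left-hand side of \eqref{z19}.

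First, since $y \in T(\varepsilon)$, I would invoke the already-established relation \eqref{z21} with $z = y$ and $k = k(j)$. This produces a point $x \in T(\varepsilon) \cap L(y, k(j))$ satisfying $y \in (\ell^1_{k(j)}(x),\ell^2_{k(j)}(x))$. Second, I would verify that this $x$ actually lies in $\pi_1(P_j)$. By the definition of $L(y, k(j))$, the pair $(x,y)$ belongs to the stripe $S_{k(j)}$; since in addition $y \in (\alpha'_j,\beta'_j)$, the defining formula \eqref{z77} for $P_j = P^{k(j)}_{(\alpha'_j,\beta'_j)}$ forces $(x,y) \in P_j$, and therefore $x \in \pi_1(P_j)$. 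Combined with $x \in T(\varepsilon)$, this gives the desired $x \in \pi_1(P_j) \cap T(\varepsilon)$, and the claim follows.

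I do not anticipate a serious obstacle here: Claim \ref{z20} essentially repackages \eqref{z21} in the language of the children from Fact \ref{z25}, and the verification reduces to unfolding the definitions of $L(\cdot,\cdot)$, $S_k$, and $P^k_{\langle\cdot,\cdot\rangle}$. The only point that requires a moment's thought is that a point $y$ chosen in the open interval $(\alpha'_j,\beta'_j)$ automatically lies in the vertical range of the stripe $S_{k(j)}$, so that $L(y, k(j))$ is nonempty and \eqref{z21} genuinely applies; this holds because $(\alpha'_j,\beta'_j) \subset \widetilde{I} \subset (\alpha,\beta)$ while every $S_k$ has vertical extent $(\alpha,\beta)$.
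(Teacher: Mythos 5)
Your proof is correct and follows essentially the same route as the paper's: apply \eqref{z21} at the given point with $k=k(j)$ to get $x\in L(y,k(j))\cap T(\varepsilon)$ satisfying $y\in(\ell^1_{k(j)}(x),\ell^2_{k(j)}(x))$, then observe that $x\in\pi_1(P_j)$ because $(x,y)\in S_{k(j)}$ together with $y\in(\alpha'_j,\beta'_j)$ places $(x,y)$ in $P_j$ by \eqref{z77}. The paper compresses this last step into the unexplained inclusion $L(z,k(j))\cap T(\varepsilon)\subset\pi_1(P_j)\cap T(\varepsilon)$, which you have simply spelled out.
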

\begin{proof}
 Let $z\in  (\alpha '_j,\beta '_j)\cap T(\varepsilon )$.
 Then it follows from \eqref{z21} that
there exists a $w\in L(z,k(j))\cap T(\varepsilon )\subset \pi _1(P_j)\cap T(\varepsilon )$ such that
$z\in (\ell  _{k(j)}^{1 }),\ell  _{k(j)}^{2 })$.
\end{proof}

Before we start the proof of Proposition \ref{z29} we   recall some definitions: We defined $m_I$ in \eqref{z81} and we saw that $\mathrm{supp}(m_I)=\bigcup\limits_{k=1}^{ M} S_k$.
\fm   Moreover,   in the statement of the Main Lemma we defined
\begin{equation}
\label{z09}
m^{\varepsilon }=m_I\cdot \indicator_{T(\varepsilon)\times T(\varepsilon)}
  \text{ and }   m^{\varepsilon}_1:=m^{\varepsilon}.
\end{equation}
 Furthermore, for $n\geq 1$:
\begin{equation}
\label{z11}
m^{\varepsilon}_{n+1}(x,y):= \int\limits_{T(\varepsilon)}  m^{\varepsilon}_{n}(x,z)\cdot m^{\varepsilon}_{1}(z,y)\, \mathrm dz.
\end{equation}
   Similarly, for a $k\geq 1$, we defined
   \begin{equation}
   \label{z14}
   E _{k}^{\varepsilon }(x_0):= \left\{ (x,y): x=x_0 \right\}\cap \mathrm{supp}(m _{k}^{\varepsilon  })
   = \left\{y\in T(\varepsilon ): m _{k}^{\varepsilon  }(x_0,y)>0  \right\}.
   \end{equation}
  In particular, $E _{1}^{\varepsilon  }(z)=\emptyset $ if $z\not\in T(\varepsilon )$ and
  \begin{equation}
  \label{z06}
  E _{1}^{\varepsilon  }(z)=  \left(\bigcup\limits_{k=1}^{M} \left( \ell_{k}^{1}(z),\ell  _{k}^{2}(z) \right)\right)\cap T\quad\text{ if }\quad z\in T(\varepsilon ).
  \end{equation}

\fm\fm \begin{proof}[Proof of Proposition \ref{z29}]
We fix an $0<\varepsilon <\varepsilon _{{\rm\scriptscriptstyle{MAIN}}}$ and we write
  $T:=T(\varepsilon )$, $m_k:=m _{k}^{\varepsilon }$, $E_k:= E_{k }^{ \varepsilon }$ and $\kappa :=\kappa (\varepsilon )$ (defined in Claim \ref{z37}).
We divide the proof into two steps.
First we recall (see  part (a) of Definition \ref{z45}) that the connected components of \cblue{\fm} $T(0)=V_{N_0}$ are $\left\{(\alpha_i,\beta_i)\right\}_{i=1}^{\tau}$. So, $(\alpha_i,\beta_i)\in\mathcal{G}_{N_0}$ for all $i\in[\tau]$.

\textbf{Step 1}
There exists an $N$ such that for all $x\in T$ here exists an $n(x)\leq N$ such that there exists {an} $1\leq i\leq \tau $ with
\begin{equation}
\label{z13}
[\alpha _i+\varepsilon ,\beta _i-\varepsilon ]\subset E_{n(x)}(x).
\end{equation}
\textbf{Step 2}
For every $x\in T$ we have
\begin{equation}
\label{z12}
[\alpha _i+\varepsilon ,\beta _i-\varepsilon ]\subset E_{n(x)}(x) \Longrightarrow E_{n(x)+N_0}(x)=T.
\end{equation}

\begin{proof}[Proof of Step 1]
It follows from \eqref{z11}, \eqref{z14}, \eqref{z09} and \eqref{z06} that
\begin{equation}
\label{z10}
E_{n+1}(x)=\bigcup\limits_{z\in E_n(x)}E_1(z)=\bigcup\limits_{z\in E_n(x)} \left(\bigcup\limits_{k=1}^{  {M}}
  \left( \ell  _{k}^{1}(z),\ell  _{k}^{2}(z) \right) \right)\cap T.
\end{equation}
  {Let $x\in T$.}
Using \eqref{z33} there exists an $i\in[\tau ]$, $k\in[M]$ such that $x\in \pi _1(H_{i,k})$. Then either
\begin{enumerate}
[{\bf (a)}]
  \item $(\ell  _{k}^{1 }(x),\ell  _{k}^{2 }(x))\subset [\alpha _i+\varepsilon ,\beta _i-\varepsilon ]$ or
  \item either $\alpha _i+\varepsilon$ or
$\beta _i-\varepsilon$ is an endpoint of a component of $E_1(x)$ of
  length at least $\kappa $ (c.f. \eqref{z34}).
\end{enumerate}
If (b) holds then $E_1(x)$ has a component of length at least $\kappa$ which has at least one endpoint in $T$.

If (a) holds then we consider
$$
E_2(x)=\bigcup\limits_{z\in E_1(x)}E_1(z)\supset
\bigcup\limits_{z\in (\ell  _{k}^{1 }(x),\ell  _{k}^{2 }(x))
}E_1(z).
$$
If $J:=\bigcup\limits_{z\in (\ell  _{k}^{1 }(x),\ell  _{k}^{2 }(x))
}E_1(z)$ is still contained in a component of $T$ then $J$ is an interval of length at least $(1/a)\kappa $. We continue this process and we obtain that if we choose $N_1$ such that $\kappa  (1/a)^{N_1}> \widetilde{\beta }-\widetilde{\alpha }$ then for an $n=n(x)<N_1$, the set
$E_n(x)$ has a component $U_1$ with length at least $\kappa $ and with at least one of its endpoints contained in
$\bigcup\limits_{i=1}^{\tau }\left( {\alpha _i+\varepsilon ,\beta_i -\varepsilon}  \right)$.
Without loss of generality, we may assume that
$$
U_1=(\alpha _i+\varepsilon ,w) \text{ with } \fmu w-\alpha _i-\varepsilon>\kappa.
$$
See Figure \ref{y94}.
  {Then there exist $k\in [M]$ and $j\in [\tau ]$ such that
\begin{equation}
\label{a98}
\pi _1\left( P _{(\alpha _j,\beta _j)}^{ k}\right)=(\alpha _i,t),
\end{equation}
where $t\leq \beta _i$.
Now, observe that
if $U_1 \supset \pi _1(P _{(\alpha _j,\beta _j)}^{ k})$ then
$E_n(x)\supset \pi _1(P _{(\alpha _j,\beta _j)}^{ k})$ (since $U_i$) is a component of $E_n(x)$. So, by \eqref{z10} and Claim \eqref{z20}
$E_{n+1}(x)\supset [\alpha _j+\varepsilon ,\beta _j-\varepsilon ]     $.}
That is, in this case we are ready. So, from now on we assume that for the  $j$ and $k$ that appear in \eqref{z04} we have
\begin{equation}
\label{z04}
U_1\subset \pi _1\left(
 P _{[\alpha _j+\varepsilon ,\beta _j-\varepsilon ]}^{k } \right).
\end{equation}

\begin{figure}[H]
  \centering
  \includegraphics[width=8cm]{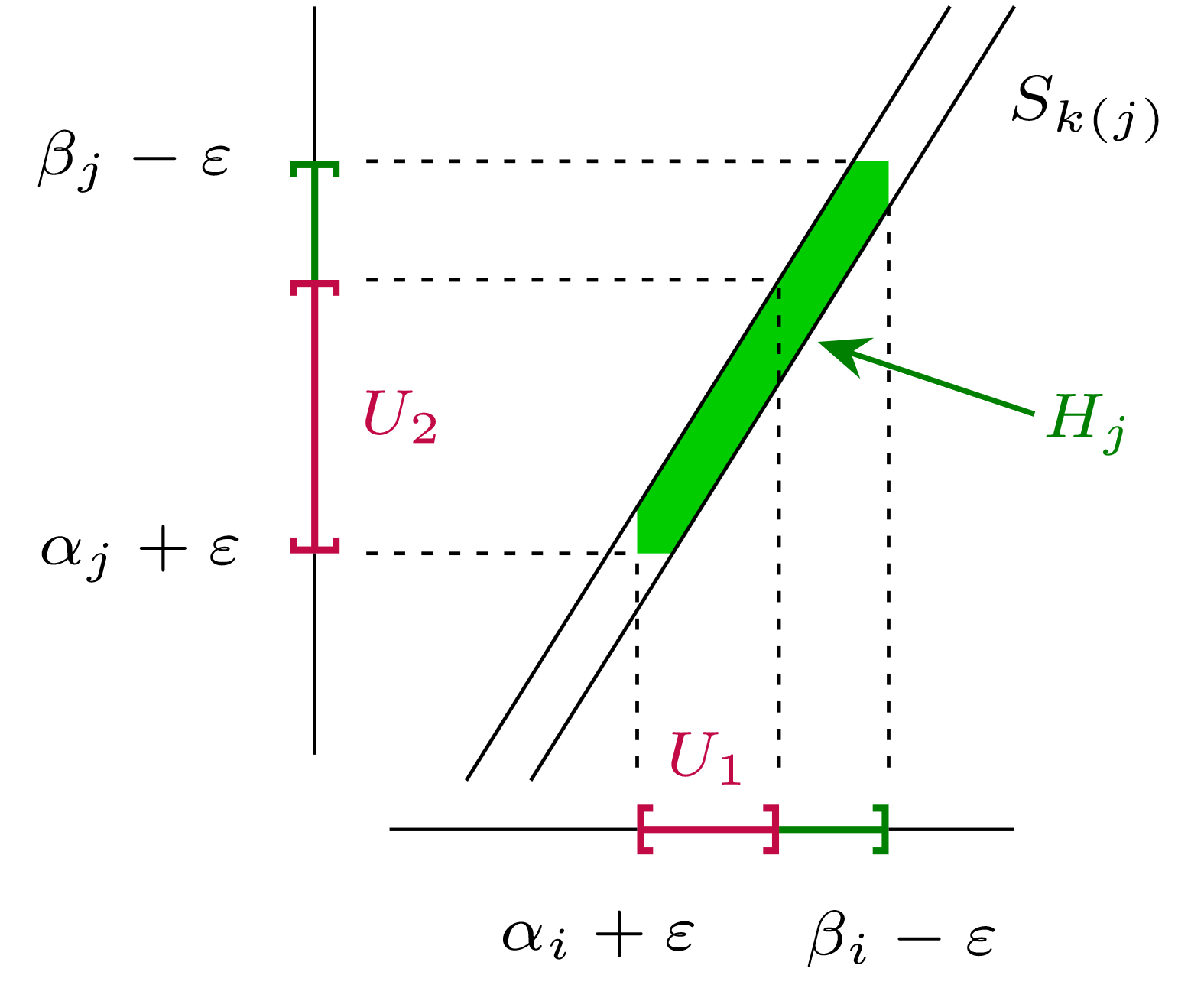}
  \caption{Definition of $U_1$ and $U_2$. }
    \label{y94}
\end{figure}

Now we define
$$
  {U_2:=\bigcup\limits_{z\in U_1}\left(\ell  _{k}^{1 }(z),\ell  _{k}^{2 }(z)\right)\cap T.}
$$
By definition the left endpoint of $U_2$ is $\alpha _j+\varepsilon $.
If $U_2\supset [\alpha _j+\varepsilon ,\beta _j-\varepsilon ]$ that is the right endpoint of $U_2$ is greater than $\beta _j-\varepsilon$
then $E_{n+1}(x)\supset [\alpha _j+\varepsilon ,\beta _j-\varepsilon ]$. Otherwise,
if $U_2\subset [\alpha _j+\varepsilon ,\beta _j-\varepsilon ]$ that is the right endpoint of $U_2$ is not less than $\beta _j-\varepsilon $
then $|U_2|\subset T(\varepsilon )$ and $|U_2|>(1/a)|U_1|$. So, we repeat the same for $U_2$ instead of $U_1$, and we get $U_3$ and so on until after uniformly bounded (not more than $N_2$) steps  $U_k$ contains a component of $T$. This completes the proof for Step 1 with $N=N_1+N_2$.
\end{proof}

\begin{proof}[Proof of Step 2]
  Now we use the notation of Fact \ref{z25}. Let $(\alpha ',\beta ')\in \mathcal{G}_m$ and $(\alpha'_j,\beta'_j)\in \mathcal{G}_{m-1}$ is a child of $(\alpha ',\beta ')$. That is there exists a $k(j)\in[M]$ such that for
  $P_j=P _{(\alpha'_j,\beta'_j) }^{k(j) }$ we have
  $\pi _1(P_j)\subset (\alpha ',\beta ')$. We claim that
\begin{equation}
\label{z03}
(\alpha ',\beta ')\cap T(\varepsilon )\subset E_n(x)
\Longrightarrow
(\alpha '_j,\beta '_j)\cap T(\varepsilon )\subset E_{n+1}(x).
\end{equation}
Namely, assume that $(\alpha ',\beta ')\cap T(\varepsilon )\subset E_n(x)$. Then
\begin{eqnarray}
  E_{n+1}(x) &=& \bigcup\limits_{y\in E_{n}(x)}
  \left(
    \bigcup\limits_{k=1}^{M}
    \left(
      \ell  _{k}^{1}(y),\ell  _{k}^{1}(y)
     \right)
   \right)\cap T
  \\
   &\supset&
   \bigcup\limits_{y\in \pi _1(P_j)\cap T}
   \left(\big(
      \ell  _{k(j)}^{1}(y),\ell  _{k(j)}^{1}(y)\big)\cap T
     \right)
     \supset
     (\alpha '_j,\beta '_j)\cap T(\varepsilon ),
  \end{eqnarray}
where in the last step we used Claim \ref{z20}.
In this way we have proved that \eqref{z03} holds.

Now we fix an $x\in T$ and let $n=n(x)$ be defined as in the proof of Step 1.
We start with $(\alpha _i,\beta _i)$  obtained in the first step.
That is $[\alpha_i+\varepsilon,\beta_i-\varepsilon] \subset E_n(x)$. By Fact \ref{z26} this implies that $(\alpha_i,\beta_i)\cap T \subset E_n(x)$.
We apply \eqref{z03} $N_0$ times. The level $N_0$-th child of $(\alpha _i,\beta _i)\in\mathcal{G}_{N_0}$ is the only element of $\mathcal{G}_{N_0}$,
which is $(\widetilde{\alpha} ,\widetilde{\beta })$. So, we get that
$E_{n+N_0}(x)\supset
(\widetilde{\alpha} ,\widetilde{\beta })\cap T=T$. On the other hand, it is immediate from the definition that $E_{n+N_0}(x) \subset T$.
  \end{proof}
The proof of Proposition \ref{z29} is immediate if we put together what we obtained in Steps 1, 2.
\end{proof}

\begin{proof}[Proof of the Main Lemma]
(1) follows from the definition of $\varepsilon _{{\rm\scriptscriptstyle{MAIN}}}$.

(2) It is easy to see that $m_n(x,y)$ is continuous on $T(\varepsilon )\times T(\varepsilon )$ for $n\geq 2$. We apply Proposition \ref{z29}
to obtain that for an $n\geq2$,  $m_n(x,y)$ is positive for all $(x,y)\in T(\varepsilon )\times T(\varepsilon )$. The uniform positivity follows from the fact, mentioned above, that $m_{n}(x,y)$ is continuous on the compact set $T(\varepsilon )\times T(\varepsilon )$ for $n\geq 2$.

(3) This was proved in Lemma \ref{z39}.

(4) This follows easily from the fact  that the right eigenfunction $f$ is also an eigenfunction of  $(F^{\varepsilon })^2=F^{\varepsilon }\circ F^{\varepsilon }$ whose kernel $m_2$ is continuous.

(5) This was proved in Fact \ref{y70}.

\end{proof}


\section{Appendix: Proof of Proposition \ref{y89}}

 The proof of Proposition \ref{y89} is a simple combination of ideas of \cite[Lemma 2.8]{farkas2019dimension} and \cite[Proposition 6]{Peres-Shmerkin}.
\begin{proof}[Proof of Proposition \ref{y89}]
 Given is  the RIFS $ \mathcal{F}=\left\{f_i(x):f_i(x)=r_ix+D_i\right\}_{i=1}^{L}$ as in Definition
\ref{def:RIFS}, and we write $s:=s(\mathcal{F})$ for the similarity dimension (the solution of the equation \eqref{eq:S_dim}).

\fm The first step is to replace $\mathcal{F}$ by the  RIFS $\mathcal{\widetilde{F}}$ defined in Proposition \ref{prop:Positive}:
\begin{equation}
\label{y36}
\mathcal{\widetilde{F}}=  \left\{ \widetilde{f}_i(x)=\widetilde{r}_ix+\widetilde{D}_i \right\}_{i=1}^{\widetilde{L}},
\end{equation}
\fm which has the convenient property that all contraction ratios $\widetilde{r}_i>0$.\; Further,
let \fm $\widetilde{s}:=s(\widetilde{\mathcal{F}})\ge s-\varepsilon/2$ (\fm where we replaced $\varepsilon$ by $\varepsilon/2$ in Proposition \ref{prop:Positive}).

Let $p_i:=\widetilde{r}_{i}^{ s}$.\fm
For $k_1,\dots,k_m\in\mathbb{N}$ and $k:=k_1+\cdots+k_m$ we introduce
\begin{equation}
\label{y44}
N(k_1,\dots,k_m):=\# \left\{(i_1,\dots  ,i_k): \#\left\{ j\in[k]:i_j=\ell  \right\}=k_\ell,\quad \forall \ell \in[L] \right\}.
\end{equation}

\begin{lemma}[Farkas and Peres-Shmerkin]\label{y43}
  There exists a $C>0$ such that for all $k\in\mathbb{N}$ there \fm exist \fm $k_1,\dots  ,k_m\in\mathbb{N}$,  such that $\sum _{i=1}^{m}k_i=k$ and
\begin{equation}
\label{y42}
N(k_1,\dots  k_{m})\geq C k^{-m/2}p _{1}^{-k_1 }\cdots p _{m}^{-k_{m} }.
\end{equation}
\end{lemma}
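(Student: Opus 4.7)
The plan is to recognize $N(k_1,\dots,k_m)$ as the multinomial coefficient $\binom{k}{k_1,\dots,k_m}=k!/(k_1!\cdots k_m!)$ and exploit the key observation that $(p_1,\dots,p_m)$ is a probability vector. Indeed, since $\widetilde{s}$ is the similarity dimension of $\widetilde{\mathcal{F}}$, the defining equation $\sum_i \widetilde{r}_i^{\widetilde{s}}=1$ gives $\sum_i p_i=1$ with each $p_i\in(0,1)$. Hence the natural choice is $k_i\approx kp_i$. I would set
$$k_i := \lfloor k p_i\rfloor \text{ for } i=1,\dots,m-1, \qquad k_m := k-\sum_{i=1}^{m-1}k_i,$$
so that $\sum_i k_i=k$ and $|k_i-kp_i|\leq m$ for each $i$. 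For $k$ small enough that some $kp_i<1$ we simply absorb the finitely many cases into the constant $C$ at the end.

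The next step is Stirling's formula in the form $n!=\sqrt{2\pi n}(n/e)^n e^{\lambda_n}$ with $\lambda_n$ uniformly bounded on $n\geq 1$. Since $\sum k_i=k$, the exponentials $e^{-k}$ in numerator and denominator cancel, yielding
$$\binom{k}{k_1,\dots,k_m}\;\geq\; c_0\,\frac{\sqrt{k}\,k^k}{\prod_{i=1}^m \sqrt{k_i}\,k_i^{k_i}}$$
for an absolute $c_0>0$. Writing $k_i = kp_i+\varepsilon_i$ with $|\varepsilon_i|\leq m$ and using $\prod_i (kp_i)^{k_i}=k^{\sum k_i}\prod_i p_i^{k_i}=k^k\prod_i p_i^{k_i}$, I would rewrite the main factor as
$$\frac{k^k}{\prod_i k_i^{k_i}} \;=\; \Bigl(\prod_i p_i^{-k_i}\Bigr)\cdot \prod_i \Bigl(1+\tfrac{\varepsilon_i}{kp_i}\Bigr)^{-k_i}.$$
For $k$ large enough that $kp_i>2m$ for every $i$, each factor $(1+\varepsilon_i/(kp_i))^{-k_i}$ converges as $k\to\infty$ to $e^{-\varepsilon_i/p_i}$, so the second product is bounded below by a positive constant depending only on $(p_1,\dots,p_m)$. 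Meanwhile $\prod_i\sqrt{k_i}\leq C_1 k^{m/2}$, hence $\sqrt{k}/\prod_i\sqrt{k_i}\geq c_1 k^{(1-m)/2}\geq c_1 k^{-m/2}$. Combining these estimates yields $N(k_1,\dots,k_m)\geq C k^{-m/2}\prod_i p_i^{-k_i}$ for all $k$ beyond some threshold $k_0$, and we shrink $C$ to absorb the finitely many remaining cases.

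The one place requiring genuine care is the control of the rounding: one must verify that the perturbation $\varepsilon_i$ contributes only a uniformly bounded multiplicative factor, rather than something that grows with $k$. This is what the expansion $(1+\varepsilon_i/(kp_i))^{-k_i}=\exp(-k_i\log(1+\varepsilon_i/(kp_i)))=\exp(-\varepsilon_i/p_i+O(1/k))$ confirms, since $k_i\cdot\varepsilon_i/(kp_i)\to \varepsilon_i/p_i$ and the next order term is $O(\varepsilon_i^2/(kp_i))=O(1/k)$. This is the only delicate point; the rest is bookkeeping with Stirling and with the relation $\sum p_i=1$.
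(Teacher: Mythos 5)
Your argument is correct, and note that the paper does not actually prove this lemma: it is cited to Farkas and to Peres--Shmerkin and used directly, so there is no in-paper proof to compare against. Your Stirling computation for the multinomial coefficient is the standard route (the cited sources argue in the same spirit, essentially a local central limit theorem for a multinomial). Two small points. First, you correctly read $(p_1,\dots,p_m)$ as a probability vector, i.e.\ $p_i=\widetilde r_i^{\,\widetilde s}$ so that $\sum_i p_i=1$; the paper's line ``Let $p_i:=\widetilde r_i^{\,s}$'' has $s$ where $\widetilde s$ is meant, and without $\sum_i p_i=1$ the choice $k_i\approx kp_i$ would not sum to $k$. Second, in the step you single out as delicate there is a harmless arithmetic slip: since $k_i=kp_i+\varepsilon_i$, one has $k_i\varepsilon_i/(kp_i)=\varepsilon_i+\varepsilon_i^2/(kp_i)\to\varepsilon_i$, not $\varepsilon_i/p_i$, so the factor tends to $e^{-\varepsilon_i}$ rather than $e^{-\varepsilon_i/p_i}$. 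Either way the bound $|\varepsilon_i|\le m$ keeps each factor, and hence their product, pinched between two positive constants, which is all that is used; the rest of your bookkeeping (cancellation of $e^{-k}$, $\sqrt{k}/\prod_i\sqrt{k_i}\ge k^{(1-m)/2}\ge k^{-m/2}$, and absorbing the finitely many small $k$ into $C$) is exactly right.
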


Let $C>0$ as in Lemma \ref{y43}. For a large $k$, which will be \fm conveniently chosen at the end of the proof,
we can choose $k_1,\dots  ,k_{\widetilde{L}}\in\mathbb{N}$ according to \fm Lemma \ref{y43} such that we have the following
\begin{equation}
\label{y42}
N(k_1,\dots  k_{\widetilde{L}})\geq C k^{-\widetilde{L}/2}p _{1}^{-k_1 }\cdots p _{\widetilde{L}}^{-k_{\widetilde{L}}}.
\end{equation}
$$
{\rm Let\quad} J_0:=\left\{(i_1,\dots  ,i_k)\in[\widetilde{L}]^k: \#\left\{ j\in[k]:i_j=\ell  \right\}=k_\ell, \quad \forall \ell \in[\widetilde{L}]\right\}.
$$
\fm Note that by definition $\# J_0=N(k_1,\dots  k_{\widetilde{L}})$.
Put $\rho _k:=\prod _{\ell =1}^{\widetilde{L}}\fm\widetilde{r} _{\ell }^{k_{\ell }}$. Then
\begin{equation}
\label{y37}
\mathbf{i}\in J_0 \Longrightarrow r_{\mathbf{i}}=\rho_k\,  \text{ and }\quad \# J_0\geq C\cdot k^{-\widetilde{L}/2}\rho_{k}^{\fm-\widetilde{s}}.
\end{equation}
Let $\mathcal{F}_k:=\left\{f_{\mathbf{i}}:\mathbf{i}\in J_0 \right\}$.
Then the similarity dimension  $s_k:=s(\mathcal{F}_k)$ is the solution of\: $\# J_0\cdot  \rho _{k}^{s_k }=1$.
That is, $\log N(k_1,\dots,k_{\widetilde{L}})+s_k\log \rho _k=0$. Hence,
$$
s_k=\frac{\log N(k_1,\dots,k_{\widetilde{L}})}{-\log \rho_k}\geq\frac{\log C -\widetilde{L}\log \sqrt{k}-\widetilde{s}\log \rho_k}{-\log\rho _k}
     = \frac{\log C -\widetilde{L}\log \sqrt{k}}{-\log\rho _k}+\widetilde{s}.
$$
Using that\fm\; $-\log \rho_k=-\sum _{\ell =1}^{\widetilde{L}} k_{\ell} \log (r_{\ell })\,\ge\,
   -\log(r_{\rm max})\sum _{\ell=1}^{\widetilde{L}}k_{\ell}=-\log(r_{\rm max})\cdot k$ \\
we obtain
$$
\fm s_k\ge\widetilde{s}-\frac{\widetilde{L}\log\sqrt{k}-\log C}{-\log(r_{\rm max})\cdot k}>\widetilde{s}-\frac{\varepsilon }{2}>s-\varepsilon,
$$
if $k$ is large enough.\end{proof}

\vspace*{-1.2cm}

\bibliographystyle{abbrv}

\end{document}